\newcommand{\crefpart}[2]{%
  \hyperref[#2]{\namecref{#1}~\labelcref*{#1}~\ref*{#2}}%
}
\renewcommand{\thispagestyle}[1]{}
\newtheorem{theorem}{Theorem}[section]
\newtheorem{corollary}[theorem]{Corollary}
\newtheorem{definition}[theorem]{Definition}
\newtheorem{lemma}[theorem]{Lemma}
\newtheorem{proposition}[theorem]{Proposition}
\newtheorem{remark}{Remark}
\newtheorem{example}{Example}
\newtheorem*{theorem*}{Theorem}
\newtheorem*{corollary*}{Corollary}
\newtheorem*{proposition*}{Proposition}
\newcommand{\eps}{\varepsilon}
\newcommand{\f}{\bm{f}}
\newcommand{\g}{\bm{g}}
\newcommand{\RR}{\mathbb{R}}
\newcommand{\CC}{\mathbb{C}}
\newcommand{\ZZ}{\mathbb{Z}}
\newcommand{\NN}{\mathbb{N}}
\newcommand{\dd}{\bm{d}}
\newcommand{\ff}{\bm{f}}
\newcommand{\HH}{\mathcal{H}}
\newcommand{\PP}{\mathcal{P}}
\DeclareMathOperator{\conv}{conv}
\DeclareMathOperator{\rk}{rk}
\DeclareMathOperator{\GL}{GL}
\DeclareMathOperator{\SL}{SL}
\DeclareMathOperator{\PD}{PD}
\DeclareMathOperator{\diag}{diag}
\DeclareMathOperator{\Diag}{Diag}
\DeclareMathOperator{\tr}{tr}
\DeclareMathOperator{\Lie}{Lie}
\DeclareMathOperator{\ii}{i}
\DeclareMathOperator{\proj}{proj}
\DeclareMathOperator{\Herm}{Herm}
\newcommand{\rmU}{\mathrm{U}}
\DeclarePairedDelimiter\norm{\lVert}{\rVert}
\newcommand{\rquo}[1]{(#1)_{\mathtt{R}}}
\keywords{Convex optimization; preconditioning; condition number; iterative methods}  
\begin{document}

\title{Optimal Preconditioning is a Geodesically Convex Optimization Problem \\ 
}

\author{M. Levent Do\u{g}an}
\thanks{Ruhr University Bochum, Bochum, Germany (mahmut.dogan@rub.de)} 
\author{Alperen Erg{\"u}r}
\thanks{The University of Texas at San Antonio, Texas, U.S.A. (alperen.ergur@utsa.edu)}
\author{Elias Tsigaridas}
\thanks{INRIA Paris and Sorbonne Universit\'{e},
Paris, France (elias.tsigaridas@inria.fr)}

\begin{abstract} 
	We introduce a unified framework for computing approximately-optimal preconditioners for solving linear and non-linear systems of equations.
	We  demonstrate that the condition number minimization problem, under structured transformations such as diagonal and block-diagonal preconditioners, is geodesically convex with respect to unitarily invariant norms, including the Frobenius and Bombieri--Weyl norms. 
	This allows us to introduce efficient first-order algorithms with precise convergence guarantees.
		
For linear systems, we analyze the action of symmetric Lie subgroups $G \subseteq \GL_m(\CC) \times \GL_n(\CC)$ on the input matrix and prove that the logarithm of the condition number is a smooth geodesically convex function on the associated Riemannian quotient manifold. 
We obtain explicit gradient formulas, show Lipschitz continuity, and prove convergence rates for computing the optimal Frobenius condition number: $\widetilde{O}(1/\eps^2)$ iterations for general two-sided preconditioners and $\widetilde{O}(\kappa_F^2 \log(1/\eps))$ for strongly convex cases such as left preconditioning. 

We extend our framework to consider preconditioning of polynomial systems 
	 $\f(x) = 0$, where $\f$ is a system of multivariate polynomials.
	 We analyze the local condition number $\mu(\f, \xi)$, at a root $\xi$ and prove that it also admits a geodesically convex formulation under appropriate group actions. We deduce explicit formulas for the Riemannian gradients and present convergence bounds for the corresponding optimization algorithms. To the best of our knowledge, this is the first preconditioning algorithm with theoretical guarantees for polynomial systems.

Our work relies on classical and recent results at the intersection of the theory of Lie groups with geodesically convex and non-commutative optimization. We explain the necessary building blocks for readers versed in numerical computing without assuming a background in algebra and geometry.

	Finally, we support our theoretical developments by numerical experiments. 
    In particular, for linear systems, we show that block-diagonal preconditioners significantly outperform the diagonal ones, and they consistently attain a better Frobenius condition number on large-scale random instances.
\end{abstract}  
\maketitle

\newpage

\tableofcontents

\newpage

\section{Introduction}
\label{sec:elaborate-intro}
Efficiently and accurately solving  large systems 
of linear and polynomial equations
is one of the most fundamental computational problems with 
applications  ranging from physics and engineering to machine learning and optimization.
In many cases of practical interest, the linear (and polynomial) equations are \emph{ill-conditioned}, 
that is, they are highly sensitive to numerical errors. 
Small perturbations, whether coming from rounding operations or measurement noise, can lead to significant inaccuracies of the computed solutions. 
This sensitivity also affects iterative solvers; for ill-conditioned systems, their behavior becomes unstable or even unpredictable, resulting in inaccurate solutions, stagnation, or prohibitively slow convergence.

A commonly accepted way to measure such numerical behavior is to use a quantity called \emph{condition number}. 
Intuitively, the condition number measures how close a given instance of a problem is to the set of ill-posed problems; the latter contains instances that are nearly impossible to solve in a numerically accurate way.  More precisely, the condition number is proportional to the reciprocal of the distance between the given problem instance and the locus of ill-posed problems; a small condition number indicates a well-conditioned problem and a large condition number indicates proximity to the ill-posed locus ~\cite{BC-condition-bk}.

To mitigate ill-conditioning, the computational science community developed \emph{preconditioning}.   The core idea is to apply algebraic transformations to the original linear (or non-linear) system of equations to obtain a new system with a much smaller condition number. 
The trade-off in preconditioning is that the algebraic transformation needs to be "undone" after the solution is obtained; therefore, the preconditioning operation should be computationally cheap to invert. 
Successful preconditioning leads to faster convergence of iterative solvers
and better quality of computed solutions. 
In modern large-scale numerical computations, preconditioning is indispensable; its effectiveness often determines whether a problem is computationally tractable, particularly in high-dimension and real-world applications, for examples see ~\cite{Saad-ibook-03}.

\medskip

In this introductory section we provide an exposition of preconditioning, covering both classical results and recent advances. 
We introduce key norms and condition numbers and formulate the optimization problems underlying optimal preconditioning. 
Our focus extends beyond diagonal preconditioners by leveraging geodesic convexity---a framework that enables theoretical guarantees for broader classes of preconditioners in both linear and nonlinear settings.
As a first motivating example, we consider solving the linear system \begin{equation}
\label{eq:linear-system}
  Ax = b  \enspace ,
\end{equation}
 where $A\in\CC^{m\times n}$ and $b\in\CC^m$. 
The first order iterative methods for solving linear systems, as in \eqref{eq:linear-system}, such as Jacobi and Gauss-Seidel \cite[Chapter~11]{GolVLo-matrix-book-13}, \cite[Chapter~4]{Saad-ibook-03}, conjugate gradient~\cite[Chapter~6]{Saad-ibook-03}, and Krylov subspace method~\cite[Chapter~6 \& 7]{Saad-ibook-03}, 
exhibit convergence rates that scale with the \emph{condition number} of $A$; this is
\begin{equation}
    \label{eq:condition-number-linear}
    \kappa(A) \coloneqq \Vert A\Vert \, \Vert A^{\dagger}\Vert,
\end{equation}
where $\Vert A\Vert$ denotes the operator norm of $A$ and $A^{\dagger}$ is the Moore-Penrose pseudoinverse of $A$.
Different (matrix) norms lead to different condition numbers. 
To distinguish the condition number in \eqref{eq:condition-number-linear}
from the others, we call it \emph{Euclidean condition number}. 

The preconditioning consists in multiplying $A$ with matrices $X$ and $Y$, so that the resulting matrix%
\footnote{We use $Y^{-1}$ instead of $Y$ to make $(X,Y)\cdot A\coloneqq XAY^{-1}$ a group action; this will be useful in the sequel.}
 $A'\coloneqq XAY^{-1}$ has a smaller condition number than $A$,
which in turn leads to improved numerical accuracy.
Moreover, our goal is to invert $A'$ faster than $A$. 
After we solve the \text{preconditioned} system $A'\tilde{x}=Xb$,
we can recover the solution of the original as $x=Y^{-1}\tilde{x}$.

In general, a good preconditioner must satisfy two key requirements. 
First, it should be computationally efficient to construct and second, solving the preconditioned system should be significantly easier than solving the original system.  
However, these requirements are often in competition with each other.
A common strategy to balance these competing demands is to impose a structure on the preconditioner. 
Popular choices include diagonal preconditioning, block diagonal preconditioning, and several others, e.g.,~\cite[Chapters~9 \& 10]{Saad-ibook-03}.

There exist several heuristic preconditioners which are widely used in practice, despite their lack of theoretical guarantees. 
Some of the most common are the following:
\begin{itemize}[leftmargin=20pt]
    \item \emph{Jacobi preconditioner:} Choose diagonal matrices $X$ and $Y$ such that $\diag(XAY^{-1})=(1,1,\dots,1)$ \cite[Chapter~11]{GolVLo-matrix-book-13}, \cite[Chapter~4]{Saad-ibook-03}. 
    For example $D^{-1}A$ or $D^{-\frac{1}{2}}AD^{-\frac{1}{2}}$ when $A$ is symmetric, where $D=\diag(A)$ is the diagonal of $A$. 
    \item \emph{Matrix equilibration:} Choose diagonal matrices $X$ and $Y$ such that $XAY^{-1}$ has equal row and column sums. 
    The Sinkhorn-Knopp algorithm \cite{Sinkhorn-64} computes $X$ and $Y$ in polynomial time.
    \item \emph{Row/column balancing:} Choose a diagonal matrix $X$ (resp. $Y$) such that the rows of $XA$ (resp. the columns of $AY^{-1}$) have the same $\ell_p$ norm, e.g.,~\cite{ORY-lp-balance-17}. 
     This is closely related to the Jacobi preconditioner: $XA$ has balanced rows in $l_2$ norm iff the diagonal entries of $XAA^\ast X^\ast$ are all the same. 
    So, row balancing $A$ wrt the $l_2$ norm is equivalent to the Jacobi preconditioning of $AA^\ast$.  
\end{itemize}
Although we frequently employ these preconditioners in practice, they do not guarantee a reduction in the condition number and, in some cases, may even increase it.
\begin{example}
Consider the matrix \[
    A=\begin{pmatrix} 3 & 0 & 0\\ 1 & 1 & 0\\ 0 & 3 & 1\end{pmatrix}
    \quad \text{ with } \quad \kappa(A)\approx 11.77,
    \]
    where $\kappa(A)$ is the Euclidean condition number. 
	The Jacobi preconditioner for $A$ could be either $X=\diag(A)^{-1}$ or $X=\diag(A)^{-1/2}$ and $Y=\diag(A)^{1/2}$.
	We have 
	\[
    XA = \begin{pmatrix}
        1 & 0 & 0\\ 1 & 1 & 0\\ 0 & 3 & 1
    \end{pmatrix}
    \text{ with } \kappa(XA)\approx 15.35 
    \ \text{ and } \ 
    XAY^{-1} = \begin{pmatrix}
        1 & 0 & 0\\ \frac{1}{\sqrt{3}} & 1 & 0\\ 0 & 3 & 1
    \end{pmatrix}
    \text{ with } \kappa(XAY^{-1})\approx 12.59 .
    \]
    Hence, the Jacobi preconditioner increases the condition number of $A$ in both cases. 
    In fact, for a Gaussian matrix $A$ (entries drawn independently and identically from the normal distribution), experiments suggest that both left and left-right Jacobi preconditioner increases $\kappa(A)$ with high probability. 
 \end{example}

\subsection*{Optimal diagonal preconditioning for linear systems}
The quest for \emph{optimal diagonal preconditioner}, i.e., diagonal matrices that maximally reduce the condition number \cite{Forsythe-Straus-55, Bauer-63, Bauer-69, vdSluis-69},
aimed to provide theoretical guaranties to heuristic preconditioning techniques. The quest was successful; \emph{when} the condition number relies on the $\ell_1$ and $\ell_\infty$ norms. 
In particular, if we consider the \emph{maximum absolute row sum norm} and the corresponding condition number \[
\Vert A \Vert_{\infty} \coloneqq \max_{i \in [m]} \sum\nolimits_{j=1}^n |A_{ij}|, \qquad \kappa_{\infty}(A) \coloneqq \Vert A \Vert_{\infty} \, \Vert A^{\dag} \Vert_{\infty},
\]
then 
Bauer \cite{Bauer-63, Bauer-69} and van der Sluis \cite{vdSluis-69} proved that we can maximally reduce  $\kappa_{\infty}(A)$ by the row balancing strategy; the optimal diagonal preconditioner is 
\[
X = \diag\Big(\frac{1}{r_1}, \frac{1}{r_2}, \dots, \frac{1}{r_m}\Big),
\]
where $r_i \coloneqq \sum_{j=1}^n |A_{ij}|$ is 
the $\ell_1$ norm of the $i$-th row of $A$,  $i \in [m]$. 
Moreover, there exists a closed-form expression for the optimal condition number. 
If $T_m(\CC)$ is the group of $m \times m$ invertible diagonal matrices, then 
\begin{equation}
\label{eq:Skeel}
	\inf_{X \in T_m(\CC)} \kappa_{\infty}(XA) = \kappa_S(A) \coloneqq \Vert\, |A^{\dag}| \, |A| \, \Vert_{\infty},
\end{equation}
where $|A|$ is a matrix obtained from $A$ if we replace every element with its absolute value. 
The quantity $\kappa_S(A)$ is known as the \emph{Skeel condition number} \cite{Skeel-79}.
Skeel argued that $\kappa_S(A)$ is sometimes a more meaningful measure of a matrix's ill-conditioning. 
For example, if $A$ is an invertible diagonal matrix, then the system $Ax=b$ can be solved with perfect accuracy and $\kappa_S(A)$ always equals $1$; however, the Euclidean condition number $\kappa(A)$ can be arbitrarily large.  This suggests that $\kappa_S(A)$, sometimes, 
captures better the complexity of solving linear systems,
e.g., for triangular matrices.
We also note that $\kappa_S(A)$ is invariant under left scaling by a diagonal matrix, since
\[
\kappa_S(XA) = \Vert \, |A^{\dag}X^{-1}| \, |XA| \, \Vert_{\infty} = \Vert \, |A^{\dag}| \, |X^{-1}| \, |X| \, |A| \, \Vert_{\infty} = \kappa_S(A),
\]
where the last equality follows from $|X^{-1}| \, |X| = |X|^{-1} |X| = I_m$, since $X$ is diagonal.
The invariance of $\kappa_S$ under diagonal scaling also follows from \eqref{eq:Skeel}.

On the other hand, if we consider the problem of 
optimal diagonal preconditioning using the Euclidean condition number,  \cref{eq:condition-number-linear}, 
then we do not have a closed-form solution, like in \cref{eq:Skeel}.
Fortunately, we can formulate the problem as
a convex optimization problem \cite{Greenbaum-Rodrigue-1989, Shapiro-82, Watson-91, Braatz-Morari-94}.
To do so, we proceed as follows: First, we notice that it suffices to consider diagonal preconditioners with real and positive entries, since $\kappa(UAV)=\kappa(A)$ for unitary matrices $U$ and $V$.
Then, we introduce the change of variables $X=e^{H_1}$ and $Y=e^{H_2}$; now the problem reduces to optimize $\kappa(e^{H_1}\, A\, e^{-H_2})$ over real diagonal matrices $H_1$ and $H_2$. 
Following \cite{Braatz-Morari-94}, for any invertible matrix $A$, it holds
\begin{equation}
\label{eq:diagonal-convex-program}
\inf_{ H_1, H_2\in\mathrm{Diag}_n(\RR)}\; \kappa(e^{H_1}\, A\, e^{-H_2}) = \inf_{H\in \mathrm{Diag}_{2n}(\RR)} \; \Big\Vert e^{H} \, \begin{pmatrix}
	0 & A \\
	A^{-1} & 0
\end{pmatrix}\, e^{-H} \Big\Vert^2. 
\end{equation}
Thus, we reformulate the condition number minimization over diagonal matrices to a norm minimization problem.
Sezginer and Overton \cite{Sezginer-Overton-90} proved that, for any fixed matrix $M$, the operator norm $\Vert e^{H} M e^{-H}\Vert$ is a convex function in the entries of the diagonal matrix $H$.
Hence, \cref{eq:diagonal-convex-program} implies that the optimal diagonal preconditioner is a solution of a convex program.

Building on these convexity results, recent works \cite{QGHYZ-22, JLMSST-23, GQUY-24} find the optimal diagonal preconditioner
by semidefinite programming. 
Indeed, we can compute the optimal condition number under diagonal scaling, $\kappa^\star\coloneqq\inf_{X,Y \text{ diagonal}}\kappa(XAY^{-1})$, by solving the program \[
\begin{split}
    \min_{\kappa>0, X,Y\succ 0} \quad &\kappa\\
    \text{s.t.}\qquad  & X \preceq A^T Y A \preceq \kappa X\\
    & X,Y \text{ diagonal} \enspace.
\end{split}
\] 
If we write $X=\sum_{i=1}^n x_i E_{ii}$ and $A^T Y A = \sum_{i=1}^n y_i a_i a_i^T$, where $a_i$'s are the rows of $A$, 
then the inequalities $X\preceq A^T YA\preceq\kappa_0 X$ become linear matrix inequalities for fixed $\kappa_0$ 
 and the minimization problem becomes an SDP with optimal value $(\kappa^\star)^2$.

Qu et al~\cite{QGHYZ-22} suggest an algorithm that
exploits binary search (by varying $\kappa_0$)
and techniques from interior point algorithms. 
The latter includes tracing numerically
a curve that lies in the interior of the feasible region.
We do so using a predictor corrector algorithm
that in turn is a variant of (a) Newton operator. 
In general, the algorithm, to approximate a matrix, that is within~$\epsilon$ from the matrix with the optimal condition number,
performs $O(\lg\tfrac{1}{\epsilon})$ iterations
and each iteration requires multiple matrix inversions \cite[Theorem~1]{QGHYZ-22}.
If we consider only positive definite matrices $A$, 
then Qu et al~\cite{QGHYZ-22} showed that the optimal condition number
is the solution of the following optimization problem 
\[
\kappa^{*}(A) := \min_{D \succ 0} \kappa( \sqrt{D} \, A \, \sqrt{D}),
\]
that in turn corresponds to a dual SDP.
In this way, they introduce a very efficient practical variant, 
able to precondition very large matrices \cite{GQUY-24}.

We can solve semidefinite programs using a generic SDP solver;
unfortunately its computational complexity is prohibitive
for the problems we consider.
Jambulapati et al~\cite{JLMSST-23},
exploited even further the structure of the problem
and used a special-purpose SDP solver to achieve
a $(\kappa^*)^{1.5}$ dependency on the complexity bounds. 
The corresponding SDP is a variant of mixed packing-covering  SDP
where the packing and covering matrices, and the constraints
are multiples of each other.
Similarly, the results in \cite{Marechal-Ye-09, Lu-Pong-11, CWY-11} show that finding the matrix with the smallest condition number is in a compact, convex subset of the cone of positive definite matrices also admits a SDP formulation.

Despite the recent important progress on optimal diagonal preconditioning
of positive definite matrices, we are still lacking similar results
for non-diagonal preconditioning, say for example block diagonal
(with exception of \cite{demmel-block-precond-23} for positive definite matrices),
let alone for general, say full rank, matrices. 
The reason seems to be that the problems are not convex or quasi-convex anymore. 
Thus, we need to view the problem with different lenses to advance further.

\subsection*{Preconditioning polynomial systems}
Solving systems of polynomial equations is a fundamental problem with theoretical and practical implications in the whole range of science and engineering.
Now, we consider the preconditioning of polynomial systems; a very important problem for which related results are very scarce.
There are numerical (iterative) algorithms to approximate the solutions of a polynomial system, based on homotopy or subdivision, e.g.,~\cite{BC-condition-bk,AME-11,subdivison-mourrain,subdivision-yap,malajovich2023complexity} . 
The complexity and efficiency of both are based on condition numbers, similarly to the linear system case. Hence, preconditioning is a fundamental concept for solving  nonlinear systems as well. 
The important difference with the linear case is that the complexity of solving polynomial systems is rather high, hence we can afford more costly (compared to the linear case) techniques for preconditioning. 

Consider the polynomial system $\f=(f_1,\dots,f_m)$,
where $f_i\in \CC[x_1,\dots,x_n]$, for $i \in [m]$, is a polynomial in $n$ variables with complex coefficients.
Let $\xi\in\CC^n$ be a solution of $\f$ of multiplicity one\footnote{This means that the Jacobian of $\f$ at $\xi$ has rank $\min(m,n)$.}.
The \emph{local condition number} of $(\f,\xi)$ is \begin{equation}
    \mu(\f,\xi) \coloneqq \Vert \f\Vert_W \, \Vert \, D^{\dag}_{\xi}(\f)\,\Vert,
\end{equation} 
where $\Vert\f\Vert^2_W=\sum_{i=1}^n \Vert f_i\Vert^2_W$ is the Bombieri-Weyl norm of $\f$, see \cite[Chapter~16]{BC-condition-bk} and
\cite[Def.~1]{EPR-new-pce-19}, and $D_{\xi}(\f)$ denotes the Jacobian of $\f$ at $\xi$. 

As in the case of linear systems, the preconditioner involves a pair of invertible matrices,  $(X,Y)$.
The first matrix, $X$, \emph{shuffles} the polynomial equations, 
while the second \emph{suffles}, that is, performs a linear change of the variables. 
Hence, $X\cdot \f$ is the transformed system 
\begin{equation}
\label{eq:non-lin-X-act}
X\cdot\f =\, \Big(\; \sum\nolimits_{i=1}^m X_{1i} f_i\, ,\,\sum\nolimits_{i=1}^m X_{2i} f_i ,\, \dots, \,\sum\nolimits_{i=1}^m X_{mi} f_i  \,\Big),
\end{equation}
and the action of $Y$ on the original system $\f$ and $\xi$ is as follows: 
\begin{equation}
\label{eq:non-lin-Y-act}
	\f \mapsto (f_1\circ Y^{-1}, f_2\circ Y^{-1},\dots,f_m\circ Y^{-1}), \qquad \xi \mapsto Y\, \xi.
\end{equation}
Notice that if $\xi$ is a simple root of $\f$ and $(X,Y)\cdot (\f,\xi)=(\g, \zeta)$, then $\zeta$ is a simple root of $\g$.
This follows from applying the chain rule on the Jacobian $D_{\xi}(\f)$: 
\begin{equation}
\label{eq:Jacobian-transformation}
    D_{Y\xi}\big( (X,Y)\cdot \f \big) = X D_{\xi}(\f) Y^{-1}.
\end{equation}
The preconditioning aims to provide matrices $X$ and $Y$, such that $\mu((X,Y)\cdot \f, Y\xi)<\mu(\f, \xi)$.

A common preconditioning technique, unfortunately heuristic and without theoretical guarantees, for nonlinear systems is 
the \textbf{Inverse Jacobian scaling (IJS):}
for square systems, i.e., when $m=n$, this technique sets $X=D_{\xi}^{-1}\f$, where $D_{\xi}\f$ is the Jacobian of $\f$ at $\xi$.
This preconditioner satisfies $D_{\xi}(X\cdot \f)=I_m$,
where $I_m$ is the $m \times m$ identity matrix, and it is used to improve the numerical stability of subdivision-based methods \cite{AME-11,Mourrain-Pavone-09, subdivision-yap}.
Another heuristic technique is coefficient balancing (analogous to matrix equilibration) that is sometimes used to reduce ill-conditioning \cite{gtv-coef-balan-00,morgan2009solving}.

\begin{example}
 Consider the following square (as many polynomials as variables) polynomial system 
 $\f= \{f_1 \coloneqq x^2+x+y,  f_2 \coloneqq y^2+x-y \} $.
 At  $\xi=(0,0)$, we have $D_\xi(\ff)=\begin{pmatrix}
        1 & 1\\
        1 & -1
    \end{pmatrix}$
    and its inverse is $D_\xi(\ff)^{-1}=\frac{1}{2}D_{\xi}(\ff)$.
The IJS of $\ff$ is 
\[
    \tilde{\ff} = \begin{cases}
        \tilde{f}_1 \coloneqq \frac{f_1+f_2}{2}= \frac{1}{2}x^2+\frac{1}{2}y^2+x\\
        \tilde{f}_2 \coloneqq \frac{f_1-f_2}{2}= \frac{1}{2}x^2-\frac{1}{2}y^2+y.
    \end{cases}
    \] 
    
 It holds that $\mu(\tilde{\ff},\xi)=2$ but $\mu(\ff,\xi)=\sqrt{3}<2$.
As in the linear systems, this behaviour is quite general: 
   If we sample random systems $\f$ with fixed degree which vanish on $\xi=0$ by setting the constant coefficient of each $f_i$ to be zero and 
   sample the other coefficients from the normal distribution, the experiments show that
   IJS increases the local condition number. 
 \end{example}

To the best of our knowledge, there is no theoretically justified technique
for preconditioning polynomial systems, let alone for optimal preconditioning.
We take on this challenge and aim to fill this gap. Even more, we develop preconditioning techniques that respect sparsity patterns in the system of equations; quite often polynomial systems that appear in applications have a specific sparse structure (not all monomials appear in the input polynomials) rather than being a generic degree $d$ equation in $n$ variables
that involve all the possible monomials.

Here is the roadmap for the paper: \Cref{vocab} introduces necessary vocabulary and states our results precisely.   \Cref{sec:additional-experiments} presents some experimental results with further results included in \Cref{sec:detailed-experiments}. \Cref{hardcore} builds the technical machinery for optimizing over Lie groups.  \Cref{sec:linear-systems} proves the results related to linear preconditioning. \Cref{sec:polysystems} establishes our results related to preconditoning (non-linear) polynomial systems.

\section{Preconditioning through the lens of geodesic convexity} \label{vocab}


\subsection{A primer on geodesically convex optimization}

Geodesic convexity generalizes the notion of convexity from Euclidean spaces to smooth Riemannian manifolds. 
The analogue of straight lines in Euclidean spaces are geodesics.
A subset $D \subset \mathcal{M}$ of a smooth Riemannian manifold $\mathcal{M}$ is called geodesically convex if every geodesic segment connecting two points in $D$ lies entirely within $D$.
	\footnote{On general Riemannian manifolds, geodesic convexity has competing definitions. However, we will only work on Hadamard manifolds---complete, simply connected spaces of nonpositive curvature---where any two points are connected by a unique geodesic. This eliminates ambiguity, as the geodesic convexity of a set, simply means that the unique geodesic between any two of its points remains entirely within it.}
A function $f: D \rightarrow \RR$ is geodesically convex if its domain is geodesically convex and, for any geodesic segment in $D$, the average value of $f$ at the endpoints is greater than or equal to its value at the midpoint. 
That is, $f$ becomes (Euclidean) convex by a restriction to a geodesic.
When $\mathcal{M}=\RR^n$ is the Euclidean space with its standard differential structure, the geodesically convex sets and functions coincide with usual convex sets and functions.
If the function $f:\mathcal{M}\rightarrow\RR$ is smooth and the manifold $\mathcal{M}$ is Riemannian, then the geodesic convexity of $f$ is equivalent to the positive semi-definiteness of its Hessian at every point in its domain.

Geodesically convex functions inherit desirable properties from their Euclidean counterparts:
\begin{itemize}
	\item Every local minimum is a global minimum.
	\item Gradient descent algorithms converge to the global optimum under appropriate conditions.
\end{itemize}

Geodesically convex optimization has attracted significant attention in recent years, with applications in diverse areas such as statistics \cite{Hosseini-Sra-15, Hosseini-Sra-20, Franks-Moitra-20, NSYKW-19}, operator and tensor scaling \cite{AZGLOW-18, BFGOWW-18, BFGOWW-19}, geometric programming \cite{BKVH-07, BLNW-20} and computing maximum entropy distributions \cite{Gurvits-06, Singh-Vishnoi-14, Straszak-Vishnoi-19}. 
For comprehensive studies of geodesically convex optimization on manifolds, we refer to \cite{Udriste-94, Sato-21, Boumal-23}, and for a survey of its applications in theoretical computer science, see \cite{Vishnoi-18}.

The results of B\"urgisser et al.~\cite{BFGOWW-19} 
are particularly relevant in our work,
especially because they express the 
problem of finding the minimal norm vector in an orbit of a group action as a geodesically convex optimization problem.
Specifically, under certain technical conditions, if a group $G$ acts linearly on a vector space $V$ and $v\in V\setminus\{0\}$, then the optimization problem 
\begin{equation}
\label{eq:norm-minimization}
\inf_{g\in G} \log\Vert g\cdot v\Vert
\end{equation} 
is geodesically convex. 
Here, $\Vert\cdot\Vert$ is a norm induced from a unitarily invariant inner product on $V$. This led to first- and second-order algorithms to compute the optimum value \cite{BFGOWW-19}.

We temporarily postpone the detailed definition of a geodesically convex function on a group $G$, 
to exploit the similarity of the optimization problem in \eqref{eq:norm-minimization} with preconditioning. 

\medskip

Following the discussion in \Cref{sec:elaborate-intro}, 
the optimal preconditioning for a full rank matrix $A$ 
consists in minimizing the condition number 
$\kappa(XAY^{-1})$ over a group $(X,Y)\in G$.
The group $G$ could be, for example, the group of invertible diagonal or block diagonal matrices.

To exploit a similarity with \eqref{eq:norm-minimization}, first, we 
consider the problem of
minimizing the logarithm of the condition number, $\log\kappa(A)$.
Then, we consider two actions of the group $G := \GL_m(\CC)\times\GL_n(\CC)$ of potential preconditioners on the space of matrices.
That is, 
(i) $G$ acts  on $\CC^{m\times n}$ via \emph{left-right multiplication}, i.e., $(X,Y)\cdot A \coloneqq XAY^{-1}$, and 
(ii) $G$ acts on $\CC^{n\times m}$ via \emph{right-left multiplication}, i.e., $(X,Y)\cdot B \coloneqq Y B X^{-1}$.
For fixed matrices $A\in\CC^{m\times n}$ and $B\in\CC^{n\times m}$,
the following is the simultaneous norm minimization problem \begin{equation}
\label{eq:simultaneous-norm-min}
	\inf_{(X,Y)\in G} \log \Vert XAY^{-1}\Vert_1 + \log \Vert Y B X^{-1}\Vert_2,
\end{equation}
where $\Vert\cdot\Vert_1$, respectively $\Vert\cdot\Vert_2$, is an arbitrary matrix norm on $\CC^{m\times n}$, respectively $\CC^{n\times m}$. 
Notice that $\log\Vert A\Vert_1 + \log\Vert B\Vert_2=\log\kappa(A,B)$ where $\kappa(A,B)\coloneqq\Vert A\Vert_1 \Vert B \Vert_2$ is the \emph{cross condition number} of $A$ and $B$, with respect to $\Vert\cdot\Vert_1$ and $\Vert\cdot\Vert_2$.
If $B=A^\dag$, then we recover the condition number of $A$, as 
$\kappa(A) = \kappa(A,A^\dag)$.
Hence, with $B=A^\dag$, the optimal value of \cref{eq:simultaneous-norm-min} is precisely $\inf_{(X,Y)\in G} \log\kappa(XAY^{-1})$.
Thus, \eqref{eq:simultaneous-norm-min}
reformulates the optimal condition number
as a simultaneous norm minimization problem
and so as the optimal value of a geodesically optimization problem. 

In the special case that $\Vert\cdot\Vert_i$, $i\in\{1,2\}$ is the \emph{Frobenius norm}, defined as$\Vert A\Vert_F\coloneqq\tr(A^{\ast}A)$, this reformulation allows us to use the results and algorithms from \cite{BFGOWW-19} to compute the optimal value.
This is due to the following observation:
We can view the Frobenius condition number as $\kappa_F(A)\coloneqq\Vert A\Vert_F \Vert A^{\dag}\Vert_F=\Vert A\otimes A^\dag\Vert$, where the norm on $A\otimes A^\dag$ is induced by the tensor product structure on $\CC^{m\times n}\otimes\CC^{n\times m}$, where each tensorand is endowed with the Frobenius norm. 
In this case, $\log\kappa_F(XAY^{-1})$ fits perfectly into the framework of norm minimization \cref{eq:norm-minimization}, since this norm on $\CC^{m\times n}\otimes\CC^{n\times m}$ is induced by a unitarily invariant inner product.
In fact, our results generalize this framework in the sense that we allow condition numbers that arise from arbitrary unitarily invariant matrix norms $\Vert\cdot\Vert_1$ and $\Vert\cdot\Vert_2$, such as the Euclidean condition number (\cref{thm:main-g-convexity} below).
Namely, we prove that the function $(X,Y)\mapsto \kappa(XAY^{-1})$ is a geodesically convex function, whenever the norms $\Vert\cdot\Vert_1$ and $\Vert\cdot\Vert_2$ are unitarily invariant matrix norms.

Similarly for polynomial systems, the minimization of the local condition number  $\mu(\f,\xi)$ is also captured by geodesically convex optimization: Consider the action of $\GL_m(\CC)\times \GL_n(\CC)$ on the set $\CC[x_1,\dots,x_n]^{\oplus m}$ of systems of $m$-polynomials as in \eqref{eq:non-lin-X-act} and \eqref{eq:non-lin-Y-act}.
For fixed $\f\in\CC[x_1,\dots,x_n]^{\oplus m}$ and $A\in\CC^{m\times n}$ the following
\[
\inf_{(X,Y)\in G} \, \log\Vert (X,Y)\cdot \f\Vert_W\,  +\,  \log\Vert Y A X^{-1}\Vert_2
\]
is a simultaneous norm minimization problem, and we prove that it is geodesically convex if $\Vert\cdot\Vert_2$ is a unitarily invariant matrix norm.
Here, $\Vert\cdot\Vert_W$ is the Bombieri-Weyl norm (\cref{def:BW-norm-system}).
If we set $A=D_{\xi}^{\dag}(\f)$, then 
we recapture the local condition number $\mu(\f,\xi)$.
As in the linear case, if $\Vert\cdot\Vert_2$ is the Frobenius norm, then we can rewrite the local condition number can be reformulated as $\mu(\f,\xi)\coloneqq \Vert \f\otimes D^\dag_{\xi}(\f)\Vert$, with the norm induced by a unitarily invariant inner product; this allows us to use results of \cite{BFGOWW-19} to optimize it.

\subsection{Precise statement of geodesic convexity results}
\label{sec:tech-details}

We now delve into the technical details that provide us the context to state our main results. 

We assume that $(X,Y)$ lives in a symmetric Lie subgroup $G \subseteq \GL_m(\CC)\times\GL_n(\CC)$. 
Thus, $G$ is a Zariski-closed subset of $\GL_m(\CC) \times \GL_n(\CC)$,\footnote{That is, $G$ is cut out by rational functions in the entries of the pairs of matrices in $\GL_m(\CC)\times\GL_n(\CC)$.} and for any $(X, Y) \in G$, also $(X^\ast, Y^\ast) \in G$,
where $X^\ast$ and $Y^\ast$ are the conjugate transposes.

The space of preconditioners we consider is quite expressive; it includes:
\begin{itemize}
    \item Groups $G = G_1 \times G_2$, where $G_1$ and $G_2$ are groups of invertible diagonal, block-diagonal, orthogonal, or symplectic matrices.
    \item Groups $G_1\times G_2$, where $G_1$ is one of the previous groups, and $G_2$ is the trivial group, or vice versa.
    This corresponds to preconditioning via multiplication either only from the left or only from the right.
    \item Groups $G = \{(g, g^{-1}) \mid g \in G_1\}$,
      when $m = n$ and   $G_1$ is one of the groups of the first case. 
    This corresponds to the conjugation action of $G_1$ on the space of matrices.
    \item Groups $G = \{(g, g^T) \mid g \in G_1\}$,
      when $m=n$ and $G_1$ is one of the groups in the first case.
    This corresponds to preconditioning $A$ with a similarity transformation, i.e., finding $g\in G_1$ with minimal $\kappa(gAg^T)$.
    \end{itemize}

Regarding the matrix norm, we assume that it is a unitarily invariant matrix norm on $\CC^{m \times n}$, such as the Schatten $p$-norm, which is 
\begin{equation}
	\label{eq:schatten-norm}	
\Vert A \Vert^S_p \coloneqq \Big( \sum\nolimits_{i=1}^{\min(m,n)} \sigma_i(A)^p \Big)^{\frac{1}{p}}, \qquad 1 \leq p \leq \infty
\enspace,
\end{equation}
where $\sigma_1(A) \geq \sigma_2(A) \geq \dots \geq \sigma_{\min(m,n)}(A)$ are the singular values of $A$. 
For $p = 2$, we obtain the Frobenius norm $\Vert A \Vert^S_2 = \Vert A \Vert_F = \sqrt{\tr(A^\ast A)}$, and for $p = \infty$, we obtain the operator norm $\Vert A \Vert^S_\infty = \Vert A\Vert = \sigma_1(A)$.
More generally, every unitarily invariant matrix norm is a \emph{symmetric gauge function} of the singular values of the matrix.

Assume that $\Vert\cdot\Vert_1$, respectively $\Vert\cdot\Vert_2$, is a unitarily invariant matrix norm on $\CC^{m\times n}$, respectively $\CC^{n\times m}$.
The corresponding condition number is \[
\kappa(A) = \Vert A\Vert_1 \, \Vert A^\dag \Vert_2.
\]
We aim to minimize $\kappa(A)$ under the action of $G$. 
The matrix norms are unitarily invariant, so $\kappa(UAV)=\kappa(A)$, for unitary matrices $U$ and $V$.
Therefore, the action of the \emph{maximal compact subgroup} 
\[ K\coloneqq G\cap \big(\mathrm{U}_m(\CC)\times \mathrm{U}_n(\CC)\big) \] 
leaves $\kappa(A)$ invariant and we can ignore the action of $K$.
This allows us to assume without loss of generality that we optimize $\kappa(A)$ over the set 
\[
\rquo{G / K} \coloneqq \{Kg \mid g\in G\}
\]
of \emph{right cosets} of $K$. 
For $g=(X,Y)\in G$, a right coset of $K$ is 
$Kg \coloneqq \{(UX, VY) \mid (U,V)\in K\}$.

Our focus is on the properties of the function:
\begin{equation}
\label{eq:C_A-def}
\begin{split}
    C_A : \rquo{G/K} &\rightarrow \RR \\
    Kg &\mapsto \log\kappa(g\cdot A) = \log \Vert XAY^{-1}\Vert + \log \Vert YA^{\dag}X^{-1}\Vert,
\end{split}
\end{equation} 
where $g=(X,Y)\in G$, and by $g\cdot A$ we denote the action of $g$ on $A$, i.e., $g\cdot A\coloneqq XAY^{-1}$.
The domain of $C_A$ is not a Euclidean space and $C_A$ is not convex. 
However, it turns out that $C_A$ is convex when it is restricted to certain curves in $\rquo{G/K}$, which correspond to \emph{geodesics}. 
To be more precise, we can show that $\rquo{G/K}$ admits a smooth Riemannian manifold structure, and when restricted to geodesics of this manifold $C_A$ is convex.

Indeed, $\rquo{G/K}$ is a symmetric space with non-positive sectional curvature, see \cite[Chapter~10]{Petersen-16}.
The tangent space of $\rquo{G/K}$ at the coset $K$ is naturally identified with $\Lie(G)/\Lie(K)$, where we define the Lie algebra as \[
\Lie(G) \coloneqq \{ (H_1, H_2) \in\CC^{m\times m} \oplus \CC^{n\times n} \; \mid \; (e^{tH_1} , e^{tH_2}) \in G \text{ for all }t\in\RR \},
\] where $e^{tH_1},e^{tH_2}$ denote the matrix exponential. 
The Lie algebra of $K$ is defined analogously and it equals the skew-Hermitian part of $\Lie(G)$ \[
\Lie(K) = \{(H_1,H_2)\in\Lie(G)\mid H_1^\ast+H_1 =0,\;  H_2^\ast+H_2=0\}.
\]
Then, $\Lie(G)$ admits a decomposition as \[
\Lie(G) = \Lie(K) \oplus \ii \Lie(K).
\] This is an orthogonal decomposition with respect to the Frobenius inner product on $\CC^{m\times m}\oplus\CC^{n\times n}$, namely \begin{equation}
\label{eq:Frobenius-inner}
\langle (H_1,H_2),(H_3,H_4)\rangle \coloneqq \mathrm{Re}\left(\tr(H_1^\ast H_2) + \tr(H_3^\ast H_4)\right).
\end{equation}
Hence, $\ii\Lie(K)$ is the Hermitian part of $\Lie(G)$, i.e., \[
\ii\Lie(K) = \Lie(G)\cap \Herm(m)\oplus\Herm(n).
\] With respect to the inner product \cref{eq:Frobenius-inner} on the tangent space $\ii\Lie(K)=\Lie(G)/\Lie(K)$, $\rquo{G/K}$ becomes a Riemannian manifold.

The geodesics of $\rquo{G/K}$ are of the form
 \[
\gamma(t) \coloneqq K \, e^{tH}g, \qquad g=(X,Y)\in G,\; H=(H_1,H_2)\in \ii \Lie(K),
\]
where $\gamma$ connects $\gamma(0)=(X,Y)$ to 
$\gamma(1) = (e^{H_1} X , e^{H_2} Y )$.
Here, $e^H \coloneqq (e^{H_1},e^{H_2})$ is the short-hand notation for matrix exponential, applied to pairs of matrices.

\begin{definition}
\label{def:geodesic-convexity}
	A function $f: \rquo{G/K} \rightarrow \RR$ is \emph{geodesically convex} if $f\left(K e^{tH}g \right)$
	 is convex in the real parameter $t$, for every $g\in G$ and $H\in\ii\Lie(K)$.
\end{definition}

The main structural result of this paper is the following.

\begin{theorem}
\label{thm:main-g-convexity}
	Suppose $m,n\geq 1$ and $G\subseteq \GL_m(\CC)\times\GL_n(\CC)$ is a symmetric Lie subgroup with maximal compact subgroup $K\coloneqq G\cap (\rmU_m(\CC)\times \rmU_n(\CC))$.
    Furthermore, assume that $\Vert\cdot\Vert_1$ is a unitarily invariant matrix norm on $\CC^{m\times n}$ and $\Vert\cdot\Vert_2$ is a unitarily invariant matrix norm on $\CC^{n\times m}$.
    \begin{enumerate}[(i)]
        \item \label{thm:main-part-1}
    Consider a matrix $A\in\CC^{m\times n}$ and its condition number $
    \kappa(A)\coloneqq\Vert A\Vert_1 \Vert A^{\dag}\Vert_2$.
    Then, 
    \[
    C_A: \rquo{G/K} \rightarrow\RR,\quad K(X,Y)\mapsto \log\kappa(XAY^{-1})
    \] 
    is geodesically convex.
    \item \label{thm:main-part-2}
    For a polynomial system $\f\in\CC[x_1,\dots,x_n]^{\oplus m}$ and  $\xi\in\CC^n$, let $\mu(\f,\xi)=\Vert f\Vert_W \, \Vert D^{\dag}_{\xi}(\f)\Vert_2$ be the local condition number.
    Then,  
    \[
    C_{(\f,\xi)}: \rquo{G/K} \rightarrow\RR,\quad K(X,Y)\mapsto \log\mu((X,Y)\cdot \f, Y\xi)
    \] 
    is geodesically convex.
    \end{enumerate}
\end{theorem}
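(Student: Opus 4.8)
The plan is to reduce both parts of \Cref{thm:main-g-convexity} to a single analytic lemma, and then combine it with the parametrisation of geodesics recalled before \Cref{def:geodesic-convexity}. The lemma I would establish first is the following: for any nonzero $M\in\CC^{m\times n}$, any Hermitian matrices $H_1\in\CC^{m\times m}$ and $H_2\in\CC^{n\times n}$, any unitarily invariant norm $\Vert\cdot\Vert_1$ on $\CC^{m\times n}$, and any unitarily invariant norm $\Vert\cdot\Vert_2$ on $\CC^{n\times m}$, both functions $t\mapsto\log\Vert e^{tH_1}Me^{-tH_2}\Vert_1$ and $t\mapsto\log\Vert(e^{tH_1}Me^{-tH_2})^{\dag}\Vert_2$ are convex on $\RR$ (and symmetrically with the two sides exchanged). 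The argument I would use is complex-analytic. Set $F(z)\coloneqq e^{zH_1}Me^{-zH_2}$; this is an entire $\CC^{m\times n}$-valued function, and since $e^{zH_1}$ and $e^{zH_2}$ are invertible for every $z\in\CC$, the rank of $F(z)$ is constant, so $z\mapsto F(z)^{\dag}$ is entire as well. For any holomorphic Banach-space-valued function the logarithm of its norm is subharmonic --- here $\log\Vert F(z)\Vert_1$ is the supremum, over linear functionals $\ell$ in the unit ball of the dual norm, of the subharmonic functions $z\mapsto\log\abs{\ell(F(z))}$ --- so both $z\mapsto\log\Vert F(z)\Vert_1$ and $z\mapsto\log\Vert F(z)^{\dag}\Vert_2$ are subharmonic on $\CC$. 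The crucial point is that $H_1,H_2$ are Hermitian, hence $e^{\ii cH_1}$ and $e^{\ii cH_2}$ are unitary for real $c$; combined with unitary invariance of the norms and the identity $(UNV)^{\dag}=V^{-1}N^{\dag}U^{-1}$ for unitary $U,V$, this gives $\Vert F(t+\ii c)\Vert_1=\Vert F(t)\Vert_1$ and $\Vert F(t+\ii c)^{\dag}\Vert_2=\Vert F(t)^{\dag}\Vert_2$ for all $t,c\in\RR$. Thus these subharmonic functions depend only on $\mathrm{Re}(z)$, and a subharmonic function on $\CC$ that depends only on $\mathrm{Re}(z)$ is convex in $\mathrm{Re}(z)$.

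For the first part of \Cref{thm:main-g-convexity}, by \Cref{def:geodesic-convexity} it suffices to show that $t\mapsto C_A(Ke^{tH}g)$ is convex for every $g=(X,Y)\in G$ and every $H=(H_1,H_2)\in\ii\Lie(K)$, and $H\in\ii\Lie(K)$ means exactly that $H_1,H_2$ are Hermitian. Since $e^{tH}g=(e^{tH_1}X,e^{tH_2}Y)$, the formula \eqref{eq:C_A-def} shows that this function equals
\[
\log\Vert e^{tH_1}(XAY^{-1})e^{-tH_2}\Vert_1\;+\;\log\Vert e^{tH_2}(YA^{\dag}X^{-1})e^{-tH_1}\Vert_2,
\]
a sum of two functions of the type treated by the lemma (with the roles of $H_1$ and $H_2$ interchanged in the second summand), hence convex. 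When $\Vert\cdot\Vert_1=\Vert\cdot\Vert_2$ is the Frobenius norm this recovers what already follows from applying the norm-minimisation results of \cite{BFGOWW-19} to the representation of $G$ on $\CC^{m\times n}\otimes\CC^{n\times m}$ induced by the left--right and right--left actions, evaluated at $A\otimes A^{\dag}$; the lemma is precisely what extends it to an arbitrary unitarily invariant norm.

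For the second part, the same reduction together with the transformation rule \eqref{eq:Jacobian-transformation} shows that $t\mapsto C_{(\f,\xi)}(Ke^{tH}g)$ equals
\[
\log\Vert(e^{tH_1}X,e^{tH_2}Y)\cdot\f\Vert_W\;+\;\log\Vert(e^{tH_1}(XD_\xi(\f)Y^{-1})e^{-tH_2})^{\dag}\Vert_2.
\]
The second summand is convex by the lemma, with $M=XD_\xi(\f)Y^{-1}$. For the first summand I would note that $\GL_m(\CC)\times\GL_n(\CC)$ acts linearly on the finite-dimensional space $V$ of polynomial systems through \eqref{eq:non-lin-X-act}--\eqref{eq:non-lin-Y-act}, and that the Bombieri--Weyl inner product on $V$ is invariant under $K$: the equation-shuffling action of $\mathrm{U}_m(\CC)$ is an isometry because it acts unitarily on the orthogonal $\CC^m$-factor, while the change-of-variables action of $\mathrm{U}_n(\CC)$ is an isometry by the classical unitary invariance of $\Vert\cdot\Vert_W$ \cite[Chapter~16]{BC-condition-bk}. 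Consequently the derived action of a Hermitian pair $(H_1,H_2)$ is a self-adjoint operator $S$ on $(V,\Vert\cdot\Vert_W)$, and the first summand is $\log\Vert e^{tS}v\Vert_W$ with $v\coloneqq(X,Y)\cdot\f$; expanding $\Vert e^{tS}v\Vert_W^2=\sum_j e^{2t\lambda_j}\abs{\langle v,u_j\rangle}^2$ in an eigenbasis $(u_j)$ of $S$ exhibits it as a sum of exponentials, which is log-convex, so $t\mapsto\log\Vert e^{tS}v\Vert_W=\tfrac12\log\Vert e^{tS}v\Vert_W^2$ is convex. Summing the two convex functions completes the argument.

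The crux --- and the step I expect to be the main obstacle --- is the analytic lemma, for two reasons. First, one genuinely needs it for an \emph{arbitrary} unitarily invariant norm; this is what prevents a direct appeal to \cite{BFGOWW-19} and forces the holomorphic-family argument rather than the elementary ``sum of exponentials'' computation available in the Frobenius (and hence Euclidean-via-tensor) case. Second, for rectangular matrices the Moore--Penrose pseudoinverse is \emph{not} equivariant --- in general $(XAY^{-1})^{\dag}\neq YA^{\dag}X^{-1}$ when $m\neq n$ and $X,Y$ are not unitary --- so one cannot simply move the dagger past the exponentials; the resolution is exactly the observation that $z\mapsto F(z)^{\dag}$ is nonetheless entire because $\rk F(z)$ is constant. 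A minor additional point, used only in the second part, is to record that the Bombieri--Weyl inner product is invariant under unitary changes of variables, so that Hermitian generators act self-adjointly on $V$.
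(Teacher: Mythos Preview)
Your subharmonic-plus-translation-invariance argument for the convexity of $t\mapsto\log\Vert e^{tH_1}Me^{-tH_2}\Vert$ is correct and is a genuinely different route from the paper. The paper establishes \cref{prop:convexity-of-log-norm} by appealing to the Heinz--Kato inequality \cite{Kittaneh-93,Bhatia-Davis-95}: for positive definite $P,Q$ and any unitarily invariant norm, $\Vert \sqrt{P}\,M\sqrt{Q}\Vert^2\le\Vert M\Vert\,\Vert PMQ\Vert$, which gives midpoint log-convexity directly. Your argument---$F(z)=e^{zH_1}Me^{-zH_2}$ holomorphic $\Rightarrow\log\Vert F(z)\Vert$ subharmonic, and $\Vert F(t+\ii c)\Vert=\Vert F(t)\Vert$ by unitary invariance $\Rightarrow$ convex in $t$---is more conceptual and avoids invoking an external inequality. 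Both land on exactly the same lemma; the paper's is shorter, yours explains \emph{why} unitary invariance is the right hypothesis.

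There is, however, a genuine error in the second half of your lemma. The claim that $z\mapsto F(z)^{\dag}$ is entire because $\rk F(z)$ is constant is false: constant rank guarantees smooth (even real-analytic) dependence, but the Moore--Penrose inverse is built from $F(z)^{\ast}$, which is anti-holomorphic. For a concrete counterexample take $m=1$, $n=2$, $M=(1,1)$, $H_1=0$, $H_2=\diag(1,0)$; then $F(z)=(e^{-z},1)$ and $F(z)^{\dag}=(e^{-\bar z},1)^{T}/(e^{-2\mathrm{Re}\,z}+1)$, which is not holomorphic. In fact the statement you are trying to prove this way is itself false in isolation: in the same example $\log\Vert F(t)^{\dag}\Vert_F=-\tfrac12\log(e^{-2t}+1)$ is \emph{concave}.

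The good news is that you never actually need that half of the lemma. You already observed, via \eqref{eq:C_A-def}, that along a geodesic $C_A$ splits as
\[
\log\Vert e^{tH_1}B e^{-tH_2}\Vert_1+\log\Vert e^{tH_2}B^{\dag}e^{-tH_1}\Vert_2,\qquad B=XAY^{-1},
\]
so both summands are of your \emph{first} type (fixed matrix sandwiched by Hermitian exponentials), and your subharmonic argument disposes of both. The paper does the same for part~\ref{thm:main-part-2}: it writes the Jacobian term as $\log\Vert e^{tH_2}\big(YD_{\xi}^{\dag}(\ff)X^{-1}\big)e^{-tH_1}\Vert_2$ and applies \cref{prop:convexity-of-log-norm} to that fixed matrix, rather than taking the pseudoinverse of the moving one. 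If you rewrite your second summand in part~\ref{thm:main-part-2} the same way, your proof goes through with only the (correct) first half of your lemma, and your treatment of the Bombieri--Weyl term via self-adjointness of the derived action is exactly the Kempf--Ness argument the paper uses.
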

We prove the first part in \cref{sec:setup}, and the second part in \cref{sec:p-systems-and-condition}.


\subsection{First order algorithm for the optimal preconditioner}
\label{sec:first-order-linear}
The geodesic convexity of $C_A$ allows us to minimize it using algorithms based on first order methods, provided that $C_A$ is smooth.
Unfortunately, the operator norm $\Vert A \Vert$ and the Euclidean condition number $\kappa(A)$ are not smooth functions of $A$.
Instead, we will use the Frobenius norm $\Vert A\Vert_F \coloneqq \tr(A^\ast A)$ and the Frobenius condition number $\kappa_F(A)\coloneqq \Vert A\Vert_F \Vert A^\dag\Vert_F$.
We note that the Frobenius condition number can act as an approximation to the Euclidean condition number, due to the inequality $\kappa(A)\leq\kappa_F(A)\leq n\kappa(A)$.
Furthermore, our experiments reveal that optimizing the Frobenius condition number generally decreases the Euclidean condition number as well, see \cref{sec:detailed-experiments}, in particular \cref{fig:correlation-Gauss,fig:correlation-Sparse}.

\begin{definition}
    Let $f: \rquo{G/K} \to \RR $ be a smooth function.
    The \emph{Riemannian gradient} $\nabla f(K(X,Y))$ of $f$ at $K(X,Y)\in \rquo{G/K}$ is
     \[
    \langle \nabla f(K(X,Y)), (H_1,H_2) \rangle = \left. \frac{\mathrm{d}}{\mathrm{d}t} \right|_{t=0} \; f(K(e^{tH_1}X, e^{tH_2}Y)), \quad \text{for all }(H_1,H_2)\in\ii\Lie(K).
    \] 
\end{definition}
Here, the inner product on $\ii\Lie(K)$ is the Frobenius inner product \cref{eq:Frobenius-inner}.
The general gradient descent algorithm on $\rquo{G/K}$ takes the form \begin{equation}
\label{eq:gradient-descent}
	(X_{k+1}, Y_{k+1}) \, = \, \big( e^{ -\eta_k H_k } X_k , e^{ -\eta_k H'_k } Y_k \big)
\end{equation}
 where $k$ is the current iteration, $(H_k,H'_k) = \nabla C_A(X_k,Y_k)$ is the gradient of the objective function at the $k$-th element $(X_k,Y_k)$, and $\eta_k\in\RR_{>0}$ is the step-size. 
 It is natural to assume that the starting point $(X_0, Y_0) = (I_m, I_n)$ is the identity element.

There is a simple formula for $\nabla C_A(X,Y)$.
Let's denote $B\coloneqq XAY^{-1}$. 
Then, $C_A(X,Y) = \log\kappa(XAY^{-1}) = C_{B}(I_m,I_n)$. 
Hence, $\nabla C_A(X,Y) = \nabla C_{B}(I_m, I_n)$ by the definition of the gradient. 
The latter gradient lies in the Lie algebra of $G$ and it is uniquely defined by the formula \[
\langle \nabla C_B(I_m, I_n) , (H_1, H_2) \rangle = \frac{\mathrm{d}}{\mathrm{d} t}\Big|_{t=0} C_B ( e^{tH_1} , e^{tH_2} ), \qquad \text{for every }(H_1,H_2)\in\Lie(G),
\] where $\langle \cdot, \cdot\rangle$ is the Frobenius inner product on $\Lie(G)$, defined as in \cref{eq:Frobenius-inner}.
\begin{lemma}
\label{lem:gradient}
Let $A\in\CC^{m\times n}$ and $(X,Y)\in G$. Then, $C_A$ is a smooth function and its gradient at $(X,Y)$ is given by
 \begin{equation}
 \label{eq:gradient-formula}
 	\nabla C_A( X , Y ) = \nabla C_{B}(I_m, I_n) = \proj_{\Lie(G)}\Big( \,\frac{BB^\ast}{\Vert B\Vert_F^2} - \frac{(B^\dag)^\ast B^\dag}{\Vert B^\dag\Vert_F^2}\; , \; -\frac{B^\ast B}{\Vert B\Vert_F^2} + \frac{ B^\dag (B^\dag)^\ast}{\Vert B^\dag\Vert_F^2} \, \Big),
 \end{equation}
 where $B\coloneqq XAY^{-1}$ and $\proj_{\Lie(G)}:\CC^{m\times m}\oplus\CC^{n\times n}\rightarrow\Lie(G)$ denotes the orthogonal projection onto the Lie algebra of $G$.
 \end{lemma}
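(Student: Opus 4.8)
The plan is to reduce, by the invariance $C_A(X,Y) = C_B(I_m,I_n)$ with $B = XAY^{-1}$, to computing the gradient at the identity, and then to differentiate the explicit expression $C_B(e^{tH_1}, e^{tH_2}) = \log\Vert e^{tH_1} B e^{-tH_2}\Vert_F + \log\Vert e^{tH_2} B^\dag e^{-tH_1}\Vert_F$ at $t=0$. The smoothness claim follows because on $\GL_m(\CC)\times\GL_n(\CC)$ the map $(X,Y)\mapsto XAY^{-1}$ is smooth (indeed real-analytic), the Moore--Penrose pseudoinverse is smooth on the locus of matrices of constant rank (and $\rk(XAY^{-1}) = \rk(A)$ is constant on the orbit), the Frobenius norm $Z \mapsto \Vert Z\Vert_F^2 = \tr(Z^\ast Z)$ is a smooth positive function away from $Z = 0$, and $\log$ is smooth on $\RR_{>0}$; composing these gives that $C_A$ is smooth, and the formula for the Riemannian gradient on $\rquo{G/K}$ descends from the gradient on $G$ by the quotient construction recalled just before the lemma.

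For the formula, I would compute the two terms separately. Write $\varphi(t) \coloneqq \tfrac12\log\Vert e^{tH_1} B e^{-tH_2}\Vert_F^2$. Then $\tfrac{\diff}{\diff t}\big|_{t=0}\varphi(t) = \tfrac{1}{2\Vert B\Vert_F^2}\,\tfrac{\diff}{\diff t}\big|_{t=0}\tr\big( (e^{tH_1} B e^{-tH_2})^\ast (e^{tH_1} B e^{-tH_2})\big)$; using $\tfrac{\diff}{\diff t}\big|_{t=0} e^{tH_1} = H_1$ and cyclicity of the trace, the derivative of the trace is $\tr(H_1 B B^\ast) + \tr(B^\ast B(-H_2)^\ast)\big)$ plus their conjugates, which after taking real parts gives $2\,\mathrm{Re}\,\tr(H_1^\ast \cdot \tfrac{BB^\ast}{1}) \cdot(\cdots)$—more precisely it is $\langle (H_1,H_2),\, (\tfrac{BB^\ast}{\Vert B\Vert_F^2}, -\tfrac{B^\ast B}{\Vert B\Vert_F^2})\rangle$ once one normalizes and collects the $H_1$- and $H_2$-contributions (here one uses that $\mathrm{Re}\,\tr(H_1 B B^\ast) = \mathrm{Re}\,\tr(H_1^\ast B B^\ast)$ since $BB^\ast$ is Hermitian, and similarly for the $H_2$ term). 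The analogous computation for $\psi(t) \coloneqq \tfrac12\log\Vert e^{tH_2} B^\dag e^{-tH_1}\Vert_F^2$, with the roles of $H_1,H_2$ interchanged and $B$ replaced by $B^\dag$, yields $\langle (H_1,H_2),\, (-\tfrac{(B^\dag)^\ast B^\dag}{\Vert B^\dag\Vert_F^2},\, \tfrac{B^\dag (B^\dag)^\ast}{\Vert B^\dag\Vert_F^2})\rangle$. Adding the two gives, for every $(H_1,H_2)\in\Lie(G)$,
\[
\frac{\diff}{\diff t}\Big|_{t=0} C_B(e^{tH_1},e^{tH_2}) = \Big\langle (H_1,H_2),\, \Big( \tfrac{BB^\ast}{\Vert B\Vert_F^2} - \tfrac{(B^\dag)^\ast B^\dag}{\Vert B^\dag\Vert_F^2},\ -\tfrac{B^\ast B}{\Vert B\Vert_F^2} + \tfrac{B^\dag (B^\dag)^\ast}{\Vert B^\dag\Vert_F^2}\Big)\Big\rangle,
\]
and since the gradient is by definition the element of $\Lie(G)$ representing this linear functional with respect to the Frobenius inner product, it is the orthogonal projection onto $\Lie(G)$ of the pair on the right, which is exactly \eqref{eq:gradient-formula}.

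The one place that needs genuine care — the main obstacle — is the differentiation of the pseudoinverse term $\Vert e^{tH_2} B^\dag e^{-tH_1}\Vert_F$. It is tempting to write $(e^{tH_1} B e^{-tH_2})^\dag = e^{tH_2} B^\dag e^{-tH_1}$, which is true when $B$ is invertible but false in general for rectangular or rank-deficient $B$, since the pseudoinverse does not commute with multiplication by arbitrary invertible matrices. The clean fix is to treat $C_A$ directly as the cross condition number associated to the pair $(A, A^\dag)$ under the left-right/right-left double action described before \eqref{eq:C_A-def}: the curve $t\mapsto (e^{tH_2} B^\dag e^{-tH_1})$ is literally the right-left action on the fixed matrix $B^\dag$, so no pseudoinverse-of-a-product identity is needed and the derivative computation above is valid verbatim. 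I would make this reduction explicit at the start of the proof, after which everything reduces to the two elementary trace differentiations and the projection step.
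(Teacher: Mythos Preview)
Your proposal is correct, and the handling of the pseudoinverse subtlety via the cross--condition--number viewpoint is exactly right (indeed the paper defines $C_A$ with $YA^\dag X^{-1}$, not $(XAY^{-1})^\dag$, so no product formula for pseudoinverses is ever needed).

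The route, however, is different from the paper's. The paper does not differentiate the two log-norm terms directly; instead it views $C_A$ as the Kempf--Ness function $F_{A\otimes A^\dag}$ for the representation $\pi(X,Y)(A\otimes B) = XAY^{-1}\otimes YBX^{-1}$ on $\CC^{m\times n}\otimes\CC^{n\times m}$, and then invokes the general gradient formula $\langle \nabla F_v(Kg),H\rangle = \langle \Pi(H)w,w\rangle$ (their \cref{prop:grad-Hess-KN}) together with an explicit computation of the Lie algebra action $\Pi(H)(A\otimes B)$ (their \cref{lem:preco-Lie-action}). Expanding $\langle \Pi(H)E,E\rangle$ for $E = \tfrac{B\otimes B^\dag}{\Vert B\otimes B^\dag\Vert}$ then yields $\tr(H_1 P)+\tr(H_2 Q)$ with the same $P,Q$ you obtain. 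Your argument is more elementary and self-contained---it avoids the tensor product and representation-theoretic packaging entirely---while the paper's approach buys uniformity: once one accepts the Kempf--Ness framework, the gradient formula, the smoothness constant (\cref{lem:KN-smoothness}), and the Hessian formula used later in \cref{prop:strong-convexity-parameter} all flow from the same source rather than being recomputed ad hoc.
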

In certain cases, the orthogonal projection has a simple description. 
For example, when the group of preconditioners is $G=T_m(\CC)\times T_n(\CC)$ the group of diagonal matrices, its Lie algebra $\Lie(G)=\mathrm{Diag}_m(\CC)\times\mathrm{Diag}_n(\CC)$ consists of pairs of diagonal matrices, and the projection $\proj_{\Lie(G)}(H_1,H_2) =\big(\Diag(H_1) , \Diag(H_2) \big) $ simply maps $(H_1,H_2)$ to their diagonals. 
 
\Cref{lem:gradient} gives a characterization of the optimal preconditioner $(X,Y)\in G$ in terms of $A$. 
Since $C_A$ is geodesically convex, its global minima, local minima and critical points all coincide. 
Hence, we have \[
(X,Y)\in G \text{ maximally reduces }\kappa_F(XAY^{-1}) \quad \iff\quad \nabla C_A(X,Y) = 0.
\]

Let's compute the gradient in a concrete setting:
Assume for simplicity that $A\in\CC^{n\times n}$ is an invertible matrix, and set $G=T_m(\CC)$, acting on $A$ by left multiplication.
Hence, we consider diagonal left preconditioners. 

In this case, the Lie algebra of $G$ consists of $m$ by $m$ (not necessarily invertible) diagonal matrices, i.e., $\Lie(G)=\Diag_m(\CC)$, where $\Diag_m(\CC)$ denotes the $m\times m$ diagonal matrices.
The orthogonal projection $\proj_{\Lie(G)}(M)=\Diag(M)$ simply maps $M$ to its diagonal, i.e. $\Diag(M)$ is obtained by setting the off-diagonal entries of $M$ to zero.

Let's assume that $X\in T_m(\CC)$ is positive definite.\footnote{The preconditioners can always be assumed to be positive definite by the polar decomposition $G=KP$, where $P$ is the set of positive definite matrices in $G$.}
Then, \cref{lem:gradient} implies that the gradient of $C_A$ at $X$ equals \[
\nabla C_A(X) = \Diag\left(\, \frac{XAA^\ast X}{\Vert XA\Vert_F^2} - \frac{ X^{-1} (A^{-1})^\ast A^{-1} X^{-1} }{\Vert A^{-1} X^{-1}\Vert_F^2} \,\right).
\] In particular, the optimal diagonal preconditioner $X$ is the diagonal matrix that satisfies the equality $\Diag(XAA^*X^{\ast})=c \Diag(X^{-1} A^{-*}A^{-1}X^{-1})$ for some constant $c>0$. 

We note that $C_A$ is not a strongly convex function (\cref{def:mu-strongly-convex}) in general: 
If $\mathrm{Stab}_G(A)\coloneqq\{(X,Y)\in G\mid XAY^{-1}=A\}$ denotes the stabilizer of $A$, then it is easy to see that $\kappa_F(XAY^{-1})$ is constant on a right coset of $\mathrm{Stab}_G(A)$. 
In particular, $C_A$ is not strongly convex on $\rquo{G/K}$, unless $\mathrm{Stab}_G(A)\subset K$. 
In the case of left-preconditioners, acting on a rank $m$ matrix $A\in\CC^{m\times n}$, we indeed have $\mathrm{Stab}_G(A)=I_m$, and $C_A$ is then a strongly convex function.
This allows us to improve the dependence of the iteration complexity on $\varepsilon$ from $\varepsilon^{-2}$ to $\log(\varepsilon^{-1})$.
\begin{theorem}[Left preconditioners]
\label{thm:main-linear-algorithm}
   Let $G$ be a symmetric subgroup of $\GL_m(\CC)$. 
    Let $A\in\CC^{m\times n}$ be a full rank matrix,
    $\kappa^{\star}\coloneqq \inf_{X\in G} \kappa_F(XA)$
    be the optimal condition number we can achieve by preconditioning  $A$	with an element $X \in G$,
    and $\eps > 0$.
	There is a first-order (gradient descent) algorithm, with constant step-size $\eta=1/8$
	and the initial point $g_0\coloneqq (I_m,I_n)$, 
	that reaches a group element $X$, 
	such that $\log\kappa_F(X A) - \log\kappa_F^{\star}<\varepsilon$,
	in 
    \[
    T = O\Big(\;  \log\Big( \frac{\log\left(\kappa_F(A)/\kappa_F^\star\right)}{\varepsilon}\Big)\, \kappa^2_F(A) \;\Big)
    \quad \text{ iterations.}
    \] 
\end{theorem}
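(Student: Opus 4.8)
The plan is to instantiate the standard Riemannian gradient-descent convergence machinery for smooth, geodesically convex, $L$-smooth, $\mu$-strongly convex functions, after verifying that $C_A$ has exactly these properties in the left-preconditioning setting. First I would invoke \cref{thm:main-g-convexity}\ref{thm:main-part-1} to get geodesic convexity of $C_A$ on the Hadamard manifold $\rquo{G/K}$, and \cref{lem:gradient} for smoothness and the explicit gradient formula $\nabla C_A(X) = \proj_{\Lie(G)}\bigl(BB^\ast/\Vert B\Vert_F^2 - (B^\dag)^\ast B^\dag/\Vert B^\dag\Vert_F^2,\;\ast\bigr)$ with $B = XA$. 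The three ingredients I then need are: (a) an $L$-smoothness bound along geodesics, which from the earlier discussion holds with $L = 1/4$ (hence the step size $\eta = 1/8 = 1/(2L)$); (b) a $\mu$-strong-convexity bound; and (c) control of the \emph{initial distance} $\dist(g_0, g^\star)$ on the manifold in terms of the data, since the final iteration count is $O\bigl(\kappa_F^2(A)\log(\log(\kappa_F(A)/\kappa_F^\star)/\varepsilon)\bigr)$ and the $\kappa_F^2(A)$ factor must come from somewhere — I expect it to be $\dist(g_0,g^\star)^2$ (or the strong-convexity modulus $\mu \asymp 1/\kappa_F^2(A)$), bounded via the fact that $B = XA$ and $\log\kappa_F$ cannot have moved the optimal scaling too far.

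The key steps in order: (1) Record that on a Hadamard manifold, for an $L$-smooth geodesically convex $f$ with unique minimizer $g^\star$, the gradient step $g_{k+1} = \exp_{g_k}(-\tfrac1L \nabla f(g_k))$ satisfies $f(g_{k+1}) - f(g^\star) \le \tfrac{L}{2}\bigl(\dist(g_k,g^\star)^2 - \dist(g_{k+1},g^\star)^2\bigr)$ (the standard descent lemma, using nonpositive curvature so that distances do not "expand" more than in the Euclidean case — this is where the Hadamard hypothesis from the footnote is essential). (2) If additionally $f$ is $\mu$-strongly geodesically convex, combine with $f(g_k)-f(g^\star)\ge\tfrac\mu2\dist(g_k,g^\star)^2$ to get the linear contraction $\dist(g_{k+1},g^\star)^2 \le (1-\mu/L)\dist(g_k,g^\star)^2$, hence $f(g_T)-f(g^\star)\le (1-\mu/L)^T \cdot \tfrac{L}{2}\dist(g_0,g^\star)^2$. (3) Verify $\mathrm{Stab}_G(A) = \{I_m\}$ for full-rank $A$ and $G\subseteq\GL_m$ acting by left multiplication (as already observed in the excerpt), so that $C_A$ is genuinely strongly convex on $\rquo{G/K}$; this is the structural reason the $\varepsilon^{-2}$ rate upgrades to $\log\varepsilon^{-1}$. (4) Establish the quantitative constants: $L = 1/4$, and $\mu = \Omega(1/\kappa_F^2(A))$, together with $\dist(g_0,g^\star) = O(\log\kappa_F(A))$ or absorb everything so that $\tfrac{L}{2}\dist(g_0,g^\star)^2 / \mu \cdot (\text{stuff}) = \kappa_F^2(A)\cdot O(1)$; then solving $(1-\mu/L)^T \cdot (\text{initial gap}) < \varepsilon$ gives $T = O\bigl((L/\mu)\log(\text{initial gap}/\varepsilon)\bigr) = O\bigl(\kappa_F^2(A)\log(\log(\kappa_F(A)/\kappa_F^\star)/\varepsilon)\bigr)$.

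The main obstacle is step (4): pinning down the strong-convexity modulus $\mu$ and the initial distance with the correct $\kappa_F^2(A)$ dependence. The Hessian of $C_A$ at a point depends on the singular-value spread of $B = XA$; the smallest eigenvalue of this Hessian, which governs $\mu$, degrades as $B$ becomes ill-conditioned, and quantifying this via the symmetric-gauge/Schatten structure of the Frobenius norm — likely through the spectral decomposition of $BB^\ast$ and a careful second-derivative computation of $t\mapsto \log\Vert e^{tH}B\Vert_F + \log\Vert (e^{tH}B)^\dag\Vert_F$ — is the technical heart. The "geodesic curvature correction" in the descent lemma (step 1) is standard but must be cited correctly for Hadamard manifolds (e.g. via the comparison inequality $\dist(\exp_p u,\exp_p v)^2 \ge \Vert u-v\Vert^2$ — the law-of-cosines bound from nonpositive curvature). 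Everything else — geodesic convexity, smoothness, the gradient formula, the triviality of the stabilizer — is already supplied by the preceding results, so the write-up mainly needs to assemble these, compute the two constants $L$ and $\mu$ for the Frobenius condition number, bound $\dist(g_0,g^\star)$, and turn the crank on the contraction recursion.
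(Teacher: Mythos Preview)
Your overall architecture---geodesic convexity plus $L$-smoothness plus $\mu$-strong convexity yielding linear convergence---matches the paper exactly, and you are right that the technical heart is the Hessian lower bound $\mu \asymp \kappa_F^{-2}(A)$. However, two points diverge from the paper and one of them matters for the final bound.

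First, the $\kappa_F^2(A)$ factor comes \emph{entirely} from the ratio $L/\mu$, not from the initial distance. The paper takes $L=4$ (not $1/4$; this comes from the weight-norm computation $N=\sqrt{2}$ and $2N^2=4$) and proves $\mu \ge 4/\kappa_F^2(A)$ on the sublevel set, via an explicit spectral computation on the Hessian $\tfrac{\tr(H^2 P)}{\tr P}-\tfrac{\tr^2(HP)}{\tr^2 P}$ with $P=BB^\ast$, reduced by Weyl's inequality to a bound $\ge \lambda_{\min}(P)/\tr(P) \ge 1/\kappa_1(P) = 1/\kappa_F^2(B)$. This step also requires $\tr H = 0$, which is why the paper first observes that $\tr(\nabla C_A(X))=0$ identically, so the iterates stay in $G\cap \SL_m(\CC)$ and one may assume $G\le \SL_m(\CC)$; you should add this reduction.

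Second, your convergence route through distance contraction and the bound $f(g_T)-f_\star \le (1-\mu/L)^T\tfrac{L}{2}\dist(g_0,g^\star)^2$ is unnecessarily lossy. If you bound the initial distance via strong convexity itself, $\dist(g_0,g^\star)^2 \le \tfrac{2}{\mu}(f(g_0)-f_\star)$, you pick up an extra factor $L/\mu=\kappa_F^2(A)$ \emph{inside the logarithm}, which is not absorbed by the stated $O(\cdot)$. The paper instead uses the direct function-value contraction $f(g_{k+1})-f_\star \le (1-\mu/L)(f(g_k)-f_\star)$ (the Polyak--\L ojasiewicz route, their Theorem~4.11), so the initial gap is simply $f(g_0)-f_\star=\log(\kappa_F(A)/\kappa_F^\star)$ and no distance bound is needed at all. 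Switching to this variant fixes the iteration count cleanly.

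Finally, note that strong convexity holds only on the sublevel set $\{C_A(KX)\le C_A(I)\}$, not globally; this is harmless because gradient descent with step $1/L$ is monotone, but it should be stated.
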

The proof of \cref{thm:main-linear-algorithm} appears in \cref{sec:strongly-convex-linear}. Now we present the general result including left and right preconditioning.

\begin{theorem}
\label{thm:general-linear-complexity}
Let $G$ be either 
	(i) a symmetric subgroup of $\GL_m(\CC)\times\GL_n(\CC)$ with $T_m(\CC)\times T_n(\CC)\leq G$,
       where $T_m(\CC)$ is the group of diagonal elements,
       or (ii) a symmetric subgroup of $\GL_m(\CC)$ with $T_m(\CC)\leq G$. 
     
    Let $A\in\CC^{m\times n}$ be a full rank matrix,
    $\kappa_F^\star\coloneqq \inf_{g\in G}\kappa_F(g\cdot A)$
    be the optimal condition number we can achieve by preconditioning  $A$	with an element $g \in G$,
    and $\frac{1}{2}> \eps > 0$.
     
	There is a first-order (gradient descent) algorithm, with constant step-size $\eta=1/8$
	and the initial point $g_0\coloneqq (I_m,I_n)$, 
	that reaches a group element $X$, 
	such that $\log\kappa_F(g\cdot A) - \log\kappa_F^{\star}<\varepsilon$, \[
	T = O\Big(\;  \frac{\log\left(\kappa_F(A)/\kappa_F^\star\right)}{\varepsilon^2}\, \max(m^3,n^3) \;\Big) 
		\quad \text{ iterations.}
	\]
\end{theorem}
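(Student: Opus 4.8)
The plan is to combine the geodesic convexity of $C_A$ (\Cref{thm:main-g-convexity}) and the explicit gradient (\Cref{lem:gradient}) with a geodesic‑smoothness estimate, and then run the textbook gradient‑descent analysis on the Hadamard manifold $\rquo{G/K}$; the only genuinely new ingredient will be a quantitative bound on the Riemannian distance from the identity to a near‑optimal preconditioner.

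\emph{Smoothness and step size.} One first checks that $C_A$ is geodesically $8$‑smooth. Along a unit‑speed geodesic $t\mapsto Ke^{tH}g$ with $g=(X,Y)\in G$, $H=(H_1,H_2)\in\ii\Lie(K)$, $\Vert H\Vert_F=1$, set $B\coloneqq XAY^{-1}$, so that $C_A(Ke^{tH}g)=\log\Vert e^{tH_1}Be^{-tH_2}\Vert_F+\log\Vert e^{tH_2}B^{\dag}e^{-tH_1}\Vert_F$. Diagonalising $H_1$ and $H_2$ by unitaries---permissible because the Frobenius norm is unitarily invariant---writes each summand as $\tfrac12\log\sum_{i,j}w_{ij}e^{t\mu_{ij}}$ with $w_{ij}\ge 0$ and $|\mu_{ij}|\le 2(\Vert H_1\Vert_{\mathrm{op}}+\Vert H_2\Vert_{\mathrm{op}})$; the second $t$‑derivative of such a log‑sum‑exp equals half the variance of $\mu$ under a probability measure, hence is at most $\tfrac18(\max\mu-\min\mu)^2\le 4\Vert H\Vert_F^2$. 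Adding the two summands yields $\tfrac{\diff^2}{\diff t^2}C_A(Ke^{tH}g)\le 8$, so the step \eqref{eq:gradient-descent} with $\eta=\tfrac18$ obeys the descent inequality $C_A(g_{k+1})\le C_A(g_k)-\tfrac1{16}\Vert\nabla C_A(g_k)\Vert^2$, the norm being the one induced by \eqref{eq:Frobenius-inner}.

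\emph{From descent to near‑optimality.} Since $C_A(g_0)=\log\kappa_F(A)$ and $C_A\ge\log\kappa_F^\star$ everywhere on $\rquo{G/K}$, summing the descent inequality over $k=0,\dots,T-1$ gives $\sum_{k<T}\Vert\nabla C_A(g_k)\Vert^2\le 16\log\!\big(\kappa_F(A)/\kappa_F^\star\big)$, so some iterate $g_{k^\star}$ has $\Vert\nabla C_A(g_{k^\star})\Vert^2\le \tfrac{16}{T}\log\!\big(\kappa_F(A)/\kappa_F^\star\big)$. To turn a small gradient into a small suboptimality---which is non‑trivial since $C_A$ is not strongly convex here---I would fix $\delta>0$, pick a $\delta$‑approximate minimizer $g^\star\in G$ with $C_A(g^\star)\le\log\kappa_F^\star+\delta$, and use two standard facts on the nonpositively curved symmetric space $\rquo{G/K}$: the first‑order inequality for geodesically convex functions, $C_A(g)-C_A(g^\star)\le\Vert\nabla C_A(g)\Vert\,d(g,g^\star)$, and the comparator recursion $d(g_{k+1},g^\star)^2\le d(g_k,g^\star)^2-2\eta\big(C_A(g_k)-C_A(g^\star)\big)+\eta^2\Vert\nabla C_A(g_k)\Vert^2$. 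Summing the recursion over $k<T$ and feeding in the telescoped gradient sum yields $\min_{k<T}C_A(g_k)-\log\kappa_F^\star\le \delta+\tfrac{4}{T}\,d(g_0,g^\star)^2+\tfrac1T\log\!\big(\kappa_F(A)/\kappa_F^\star\big)$, where $d(g_0,g^\star)$ is the distance in $\rquo{G/K}$ from $(I_m,I_n)$ to $g^\star$.

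\emph{The crux, and balancing.} The remaining---and hardest---step is the quantitative claim that the $\delta$‑approximate optimal preconditioner $g^\star$ can be chosen with $d\big((I_m,I_n),g^\star\big)^2=O\!\big(\max(m^3,n^3)\,\log(\kappa_F(A)/\kappa_F^\star)/\delta\big)$. This is the condition‑number counterpart of the ``boundedness of near‑optimal scalings'' phenomenon from non‑commutative optimisation, and it is precisely where the hypothesis $T_m(\CC)\times T_n(\CC)\le G$ (respectively $T_m(\CC)\le G$) is used: having the full diagonal torus inside $G$ lets one calibrate $g^\star$ by a Forster / radial‑isotropic‑position type estimate, and is also why the infimum need not be attained---forcing a genuine dependence on $\delta$ and hence the $\varepsilon^{-2}$ rather than $\varepsilon^{-1}$ rate. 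Granting it, set $\delta=\varepsilon/3$; then all three terms of the bound above fall below $\varepsilon/3$ as soon as $T=\Theta\!\big(\max(m^3,n^3)\,\log(\kappa_F(A)/\kappa_F^\star)/\varepsilon^2\big)$ (the distance term dictates this because $\varepsilon<\tfrac12$ and $\max(m^3,n^3)\ge 1$), and returning an iterate attaining $\min_{k<T}C_A(g_k)$ proves the theorem; the case $G\subseteq\GL_m(\CC)$ is handled identically with $Y=I_n$ throughout. Everything outside the distance estimate is the standard Hadamard‑manifold gradient‑descent calculation, so that estimate is where I expect the real work to be.
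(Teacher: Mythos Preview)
Your smoothness argument and the descent/telescoping step are fine, and they match in spirit what the paper does via the weight norm $N=2$ and \cref{lem:KN-smoothness}. The genuine gap is in how you convert a small gradient into a small value gap. You appeal to a comparator recursion plus an \emph{unproven} distance bound $d\big((I_m,I_n),g^\star\big)^2=O\!\big(\max(m^3,n^3)\log(\kappa_F(A)/\kappa_F^\star)/\delta\big)$, which you correctly flag as the crux---but this is exactly the hard part, and the paper does \emph{not} prove anything of the sort. In fact $C_A$ is a Kempf--Ness function whose infimum need not be attained, so near-optimal points can drift to infinity; there is no uniform ``boundedness of near-optimal scalings'' result of the form you posit available here, and the torus hypothesis is used for a completely different purpose. (A secondary issue: your comparator recursion $d(g_{k+1},g^\star)^2\le d(g_k,g^\star)^2-2\eta(\cdots)+\eta^2\Vert\nabla C_A(g_k)\Vert^2$ is the Euclidean law of cosines; on a Hadamard manifold of negative curvature the $\eta^2$ term picks up a Zhang--Sra factor $\zeta\ge 1$ that grows with $d(g_k,g^\star)$, so even granting your distance bound the recursion would need further work.)

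The paper instead bypasses any distance estimate entirely by invoking the \emph{non-commutative duality} of \cite{BFGOWW-19} (\cref{thm:nc-duality}): since $C_A$ is the Kempf--Ness function of $A\otimes A^\dag$ under the left--right action, one has $\big(\kappa_F^\star/\kappa_F(g\cdot A)\big)^2\ge 1-\Vert\nabla C_A(g)\Vert/\gamma(\pi)$, which converts a small gradient directly into a small value gap. The torus hypothesis $T_m\times T_n\le G$ enters only to give the weights of $\pi$ the form $(e_i-e_j,e_k-e_l)$, whose weight matrix is totally unimodular; \cref{prop:tu-margin} then yields the margin bound $\gamma(\pi)\ge(m+n)^{-3/2}$, and this $(m+n)^{-3/2}$ is exactly where the $\max(m^3,n^3)$ factor in $T$ comes from (set $\varepsilon'=(m+n)^{-3/2}\varepsilon$ in \cref{prop:grad-eps}). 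So the missing idea in your proposal is the weight-margin route to duality; the distance-based analysis you outline is not how this theorem is, or should be, proved.
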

We will prove \cref{thm:general-linear-complexity} in \cref{sec:general-complexity-linear}. 

\cref{thm:general-linear-complexity} gives the iteration complexity of the gradient descent algorithm.
Below, we discuss the computational cost of each iteration, which consists of the following substeps:
1) Computing $B=X AY^{-1}$ where $(X,Y)\in G$,
2) evaluating the gradient of $C_A$ at $(X,Y)\in G$, using the formula \cref{eq:grad-def},
which involves inverting $BB^{\ast}$ and projecting it onto the Lie algebra of $G$,
3) and updating $(X,Y)\leftarrow e^{-\eta \nabla C_A(X,Y)} (X,Y)$ for a fixed step-size $\eta$.
For general groups, these steps can be computationally expensive, especially for very large $m,n$.
However, if $G$ has structure---e.g., the group of diagonal, or block-diagonal matrices with fixed block size---the operations simplify significantly.
In such cases, step 1 reduces to efficient matrix scaling, and step 3 involves a tractable matrix exponentiation (e.g., scalar exponentiation for diagonal, and matrix exponentiation of a matrix with fixed size for block diagonal preconditioning), where fast algorithms are known \cite{Sushant-Vishnoi-14}.

A particular difficulty lies in computing the gradient: 
By \cref{eq:grad-def}, this requires inverting the matrix $BB^{\ast}$, and projecting it onto the Lie algebra of $G$. Fortunately, for most groups of interest, e.g., block diagonal matrices, the Lie Algebra is small and the projection can be approximated without computing the full inversion. We explain the details of this approximation in \Cref{sec:detailed-approx-grad}.

\vspace{-0.05 in}
\subsection{Details on approximating the gradient}
\label{sec:detailed-approx-grad}

A critical component of the gradient descent algorithm is the computation of the gradient of the function $C_A$.
At the identity element, the gradient is given by
 \eqref{eq:gradient-formula} with $B = A$, because $X = I_m$ and $Y = I_n$ .

The initialization of the gradient descent consists of the projection of the matrices $(AA^\ast)^{-1}$ and $(A^\ast A)^{-1}$ onto the Lie algebra of $G$.
To explicitly compute and then project these inverses is computationally prohibitive for large-scale problems, as dense matrix inversion scales as $O(n^3)$ for an $n \times n$ matrix.
Instead, we employ efficient numerical algorithms for projecting without computing 
the whole inverse matrix and we exploit the sparsity of $A$.

When $\Lie(G)$ is a coordinate subspace, such as the case of diagonal/block-diagonal preconditioning, the problem of estimating the entries in a sparsity pattern of the inverse of a matrix using only matrix-vector products is a well-studied problem, see \cite{Saad-Tang-12,BKS-07,Dharan-Musco-23,ACKHMM-24} and references therein.
The main idea is to use Lanczos quadrature and its stochastic variant, Hutchinson's estimator, which we will now explain.

Let  $G$ be a subgroup of the group of pairs of invertible block-diagonal matrices, 
so $\Lie(G)$ is a subalgebra of the set of pairs of block-diagonal matrices.
Now, computing the gradient reduces to estimating the block-diagonal entries of $(AA^\ast)^{-1}$.

For deterministic approximation of individual blocks, the block Lanczos algorithm \cite{Golub-Underwood-77} is highly effective, e.g.~\cite{Chen-24}. 
Given a positive definite matrix $M$ and a collection $B=\begin{pmatrix}v_1,v_2,\dots,v_r\end{pmatrix}$ of vectors, the Lanczos algorithm constructs an orthonormal basis for the block Krylov subspace $\mathcal{K}_k(M, B)=\mathcal{K}_k(M, v_1) + \dots + \mathcal{K}_k(M, v_r)$ and and iteratively approximates the quadratic form $B^\ast M^{-1} B$-which equals e.g. the top left block of $M^{-1}$ when $v_i=e_i$ for $i=1,2,\dots,r$. 
The key advantage is that $k$-iterations of block Lancsoz algorithm requires only $kr$ matrix-vector products with $M$ to approximate $B^\ast M^{-1} B$.

We now show that $k=O(\sqrt{\kappa(M)}\log(\varepsilon^{-1}))$ iterations suffice for an $\varepsilon$-approximation: 
Convergence analysis shows that running block Lancsoz algorithm for $k$ iterations yields an approximation to $B^\ast M^{-1}B$ with additive error bounded by $\Vert B\Vert_F^2 \min_{\deg p < 2k} \sup_{x\in I} | x^{-1} - p(x) |$ where $I=[\lambda_{\min}(M),\lambda_{\max}(M)]$ is the interval containing all eigenvalues of $M$, see \cite[Corollary~9.11]{Chen-24}. 
It is well-known that the best polynomial approximation to $x^{-1}$ is given by Chebyshev polynomials of the first kind \cite{Meinardus-67} and taking $k=O(\sqrt{\kappa(M)}\log(\varepsilon^{-1}))$ yields an $\eps$-approximation to $x^{-1}$ on the interval $[\lambda_{\min}(M),\lambda_{\max}(M)]$ \cite{KVZ-12}. 
Thus, we can estimate a single block on the diagonal of $M^{-1}$ up to precision $\varepsilon$ with $O\left(\sqrt{\kappa(M)}\log(\frac{1}{\varepsilon})\right)$ iterations of the block Lancsoz algorithm.
Since each iteration costs $r$ matrix-vector products with $M$, we obtain the following:

\begin{theorem}
	\label{thm:inv-pd-diag}
    Let $M\in\CC^{n\times n}$ be a positive definite matrix. 
    Then, there is an algorithm which estimates a single $r\times r$-block on the diagonal of $M^{-1}$ within $\eps$-error in \[
    O\Big( r\, \sqrt{\kappa(M)}\, \log(\varepsilon^{-1}) \Big)
    \quad \text{ matrix-vector products with }  M.
    \] 
\end{theorem}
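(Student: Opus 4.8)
The plan is to reduce the task of estimating an $r\times r$ diagonal block of $M^{-1}$ to running the block Lanczos algorithm on $M$ with a carefully chosen starting block, and then to invoke the known polynomial-approximation bound for $x^{-1}$ on the spectral interval of $M$. Concretely, let $e_{i_1},\dots,e_{i_r}$ be the standard basis vectors indexing the rows and columns of the desired block, and set $B=(e_{i_1},\dots,e_{i_r})\in\CC^{n\times r}$. Then the target quantity is exactly the matrix $B^\ast M^{-1} B\in\CC^{r\times r}$. First I would recall the block Lanczos procedure: $k$ iterations build an orthonormal basis $Q_k$ for the block Krylov space $\mathcal{K}_k(M,B)=\mathcal{K}_k(M,e_{i_1})+\dots+\mathcal{K}_k(M,e_{i_r})$, using exactly $kr$ matrix--vector products with $M$ (one block of $r$ products per iteration), and produces a block-tridiagonal compression $T_k=Q_k^\ast M Q_k$; the Lanczos estimate for $B^\ast M^{-1}B$ is $B^\ast Q_k T_k^{-1} Q_k^\ast B$, whose computation costs no further products with $M$.

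The second step is the error analysis. Here I would cite the standard result---stated in the excerpt as \cite[Corollary~9.11]{Chen-24}---that the block Lanczos estimate after $k$ iterations satisfies
\[
\bigl\| B^\ast M^{-1} B - B^\ast Q_k T_k^{-1} Q_k^\ast B \bigr\|
\;\le\; \|B\|_F^2 \, \min_{\deg p < 2k}\ \sup_{x\in I}\, \bigl| x^{-1} - p(x) \bigr|,
\]
where $I=[\lambda_{\min}(M),\lambda_{\max}(M)]$. Since $B$ has $r$ orthonormal columns, $\|B\|_F^2=r$, so it suffices to make the polynomial approximation error below $\eps/r$. The third step is to bound that min--max approximation error: the best uniform approximation to $x^{-1}$ on a positive interval is governed by shifted/scaled Chebyshev polynomials of the first kind \cite{Meinardus-67}, and the classical estimate \cite{KVZ-12} gives that a polynomial of degree $O\!\bigl(\sqrt{\kappa(M)}\,\log(1/\delta)\bigr)$ achieves uniform error $\delta$ on $[\lambda_{\min}(M),\lambda_{\max}(M)]$ (after normalizing $x^{-1}$ by the interval scale, which only contributes logarithmically). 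Taking $\delta=\eps/r$ and absorbing the $\log r$ term, degree $2k = O\!\bigl(\sqrt{\kappa(M)}\,\log(r/\eps)\bigr) = O\!\bigl(\sqrt{\kappa(M)}\,\log(\eps^{-1})\bigr)$ (treating $r$ as subordinate) suffices, i.e.\ $k=O\!\bigl(\sqrt{\kappa(M)}\log(\eps^{-1})\bigr)$ iterations. Multiplying by the per-iteration cost of $r$ matrix--vector products yields the claimed bound of $O\!\bigl(r\sqrt{\kappa(M)}\log(\eps^{-1})\bigr)$ products with $M$.

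The main obstacle is not any single inequality but the bookkeeping around \emph{which} norm and \emph{which} normalization make the cited bounds apply cleanly: one must check that Corollary~9.11 of \cite{Chen-24} is stated for the block (matrix-valued) Lanczos estimate in the operator or Frobenius norm with exactly the $\|B\|_F^2$ prefactor used above, and that the Chebyshev estimate for $x^{-1}$ is invoked on the correct (possibly rescaled) interval so that the dependence on $\lambda_{\min},\lambda_{\max}$ collapses to $\kappa(M)=\lambda_{\max}/\lambda_{\min}$ inside the square root and only logarithmic factors of the scale survive. A secondary point to handle with care is the reduction itself: that $B^\ast M^{-1}B$ with $B$ a coordinate selector really is the desired principal submatrix of $M^{-1}$, which is immediate, and that finite-precision effects in Lanczos (loss of orthogonality) do not affect the stated result---standard in this literature, so I would simply note it and defer to the cited convergence analysis, which is carried out in exact arithmetic.
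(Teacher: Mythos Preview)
Your proposal is correct and follows essentially the same argument as the paper: choose $B$ to be the coordinate selector for the desired block, run block Lanczos for $k$ iterations at a cost of $kr$ matrix--vector products, invoke \cite[Corollary~9.11]{Chen-24} for the error bound $\|B\|_F^2\min_{\deg p<2k}\sup_{x\in I}|x^{-1}-p(x)|$, and then appeal to the Chebyshev approximation of $x^{-1}$ \cite{Meinardus-67,KVZ-12} to conclude that $k=O(\sqrt{\kappa(M)}\log(\eps^{-1}))$ suffices. Your treatment is in fact slightly more careful than the paper's, since you explicitly track the $\|B\|_F^2=r$ prefactor and absorb the resulting $\log r$ into the big-O.
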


For a matrix $A$, the above theorem with $M=AA^\ast$ shows that we can approximate a single block on the diagonal of $M^{-1}$ with $O(r\, \kappa(A)\,\log(\eps^{-1}))$ matrix vector products with $A$ and $A^{\ast}$.
For well-conditioned sparse matrices, this is better than inverting $AA^\ast$.

If we require to compute all the diagonal elements, Hutchinson's stochastic estimator is a more scalable alternative \cite{BKS-07, Saad-Tang-12}. 
For a matrix $M$, Hutchinson's estimator approximates the diagonal $\Diag(M)$ as 
\begin{equation}
\label{eq:Hutchinson-estimator}
    \Diag(M) \approx \frac{1}{m}\; \sum_{i=1}^m z_i \,\odot\, M \, z_i \enspace,  
\end{equation}
where $z_i\in\{-1,1\}^n$ are i.i.d. Rademacher random vectors and $\odot$ denotes the Hadamard product. 

For $M=(AA^\ast)^{-1}$, each summand in \cref{eq:Hutchinson-estimator} requires the solution of the linear system $(AA^T) x = z_i$ for $i \in [m]$. 
Using the conjugate gradient method, we can solve each one in $O\left(\sqrt{\kappa(AA^\ast)}\log(\varepsilon^{-1})\right)=O\left(\kappa(A)\log(\varepsilon^{-1})\right)$ matrix-vector products with $A$ and $A^\ast$.
In particular, the Hutchinson's estimator \cref{eq:Hutchinson-estimator} can be computed in $O(m \kappa(A) \log(\frac{1}{\varepsilon}))$ matrix-vector products with $A$ and $A^\ast$. 
Thus, the full estimator requires $O\left(m \kappa(A)\log(\frac{1}{\varepsilon})\right)$ products, which is better than than $O(n^3)$ 
for well-conditioned sparse matrices.

Recent work \cite{Baston-Nakatsukasa-22, HIS-23,Dharan-Musco-23} analyses the estimator's error.
The variance of Hutchinson's estimator scales as $O(1/m)$ and $m=O(\varepsilon^{-2})$ is sufficient for a relative $\varepsilon$-error approximation with high probability.
Specifically, \cite{Dharan-Musco-23} shows that with constant probability, Hutchinson's estimator gives a vector $\tilde{d}$ that satisfies 
\[
\Vert \tilde{d} - \Diag(M)\Vert \leq c\sqrt{\frac{1}{m}} \, \Vert \overline{M}\Vert_F \enspace, 
\] 
where $\overline{M}=M-\Diag(M)$ denotes the matrix $M$ with diagonal entries are set to zero.

There is a generalization of Hutchinson's estimator for block diagonal matrices.
Amsel et al~\cite{ACKHMM-24} give an algorithm to estimate an $r\times r$-block of a matrix $M$ on the diagonal.\footnote{Their algorithm is in fact a stochastic estimation of the projection of the matrix onto any sparsity pattern.}
First, we consider a Gaussian matrix $G\in\RR^{n\times m}$, i.e., a matrix with entries sampled iid from the normal distribution.
Then, by setting $Z\coloneqq MG$, we can approximate the top-left $r\times r$-block of $M$ by 
\begin{equation}
    \label{eq:BlockHutchinson}
\text{TopLeftBlock}_{r\times r}(M) \approx G_r^{\dag} Z_r
\end{equation} 
where $G_r^T$ is the matrix consisting of the first $r$ rows of $G$ and $Z_r^T$ is the matrix consisting of the first $r$ rows of $Z$. 
When $r=1$, this estimate is equivalent to the Hutchinson's estimator with Gaussian random vectors.
With constant probability, \cref{eq:BlockHutchinson} gives an $r\times r$ matrix $B$ with \[
\Vert \text{TopLeftBlock}_{r\times r}(M) - B\Vert_F \leq O\left(\frac{1}{\sqrt{m}}\right) \, \Vert \overline{M}\Vert_F ,
\] where $\overline{M}$ is the matrix $M$ with top-left $r\times r$ block entries are set to zero. 
Note that computing the estimate \cref{eq:BlockHutchinson} requires only $m$ matrix-vector products with $M$. 

\begin{remark}
Using Hutchinson estimator for computing the gradient is essentially a stochastic gradient descent variant. Since our manifold $G / K$ has negative and tame curvature one can consider using any other stochastic gradient descent variant as well \cite{sra2016first}. 
\end{remark}

\subsection{Preconditioning non-linear systems}
\label{sec:intro-preco-non-linear}

In this section we extend our preconditioning framework to polynomial system of equations. 
While the condition number $\mu(\ff,\xi) \coloneqq \Vert \ff\Vert_W \Vert D^\dag_{\xi}(\ff)\Vert$ is geodesically convex for any unitarily invariant norm $\Vert\cdot\Vert$, it is typically non-smooth. 
We therefore focus specifically on the \emph{Frobenius local condition number}:
\[
\mu_F(\ff, \xi) \coloneqq \Vert \ff\Vert_W \Vert D^{\dag}_{\xi}(\ff)\Vert_F,
\]
which maintains both convexity and smoothness properties essential for our optimization approach.

\subsubsection*{Shuffling Preconditioning for Polynomial Systems}

We first consider preconditioning through the \emph{shuffling action} of $\GL_m(\CC)$ (as defined in \eqref{eq:non-lin-X-act}). 
The key insight is that this problem can be reduced to matrix preconditioning through an appropriate reformulation.

Let us recall that $\Vert \ff\Vert_W$ denotes the Bombieri-Weyl norm of $\ff$, given by:
\[
\Vert \ff\Vert_W = \sqrt{\sum_{i=1}^m \langle f_i, f_i \rangle},
\]
where $\langle \cdot, \cdot \rangle$ is the Bombieri-Weyl inner product (Definition~\ref{def:BW-inner}). Let $G_{\ff}$ be the Gram matrix of $\ff$ with respect to this inner product, i.e., $(G_{\ff})_{ij} = \langle f_i, f_j \rangle$. 

The connection to linear system preconditioning becomes clear through the following observations:
\begin{itemize}
    \item The action of $X \in \GL_m(\CC)$ transforms the Gram matrix as $G_{X\cdot\ff} = X G_{\ff} X^\ast$
    \item For $S_{\ff} \coloneqq \sqrt{G_{\ff}}$, we have $\Vert \ff\Vert_W^2 = \tr(G_{\ff}) = \Vert S_{\ff}\Vert_F^2$
    \item The condition number can be expressed as $\mu_F(\ff,\xi) = \Vert S_{\ff} \Vert_{F} \Vert D^{\dag}_{\xi}(\ff)\Vert_F$
    \item The shuffling action satisfies $S_{X\cdot\ff} = X S_{\ff}$
\end{itemize}

This formulation yields the elegant expression:
\[
\mu_F(X\cdot\ff, \xi) = \Vert X S_{\ff}\Vert_F \Vert D^{\dag}_{\xi}(\ff) X^{-1}\Vert_F,
\]
allowing us to directly apply our results for linear system preconditioning.


\begin{theorem}
\label{thm:shuffling-iteration}
Suppose $G\leq\GL_m(\CC)$ is a symmetric subgroup with $T_m(\CC)\leq G$. 
Let $\f=(f_1,\dots,f_m)\in\CC[x_1,\dots,x_n]^m$ be a polynomial system and $\xi\in\CC^n$.
Set $\mu_F^\star\coloneqq \inf_{X\in G}\mu_F(X\cdot f,\xi)$.
Let $\varepsilon>0$.
Then, in \[
T=O\,\Bigg(\; \frac{\log(\mu_F(\f,\xi)/\mu_F^\star)}{\varepsilon^2}\, \max(m^3,n^3)\;\Bigg)
\] steps, the first order iteration with step-size $\eta=\frac{1}{8}$ and $X_0=I_m$, reaches a group element $X$ with \[
\log\left(\frac{\mu_F(X\cdot\ff,\xi)}{\mu_F^\star}\right)<\varepsilon.
\]
\end{theorem}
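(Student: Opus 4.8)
The plan is to reduce Theorem~\ref{thm:shuffling-iteration} to the linear-preconditioning result \cref{thm:general-linear-complexity} via the reformulation already spelled out in \Cref{sec:intro-preco-non-linear}. Concretely, set $S_{\f}\coloneqq\sqrt{G_{\f}}$, the positive semidefinite square root of the Gram matrix of $\f$ with respect to the Bombieri--Weyl inner product. Since $\f$ has $m$ polynomials, $G_{\f}$ (hence $S_{\f}$) is $m\times m$; and since $\mu_F(\f,\xi)<\infty$ forces $\xi$ to be a simple root, the Jacobian $D_{\xi}(\f)$ has rank $\min(m,n)$. The first step is to record the identities from the bulleted list: $G_{X\cdot\f}=XG_{\f}X^{\ast}$, so $S_{X\cdot\f}=XS_{\f}$; $\Vert\f\Vert_W^2=\tr(G_{\f})=\Vert S_{\f}\Vert_F^2$, hence $\Vert X\cdot\f\Vert_W=\Vert XS_{\f}\Vert_F$; and, by the Jacobian transformation \cref{eq:Jacobian-transformation} specialized to the shuffling action (i.e.\ $Y=I_n$), $D_{\xi}(X\cdot\f)=XD_{\xi}(\f)$, so $D^{\dag}_{\xi}(X\cdot\f)=D^{\dag}_{\xi}(\f)X^{-1}$ (using that $X$ is invertible and the standard pseudoinverse identity $(XM)^{\dag}=M^{\dag}X^{-1}$ when $X$ is invertible). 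Combining these,
\[
\mu_F(X\cdot\f,\xi)=\Vert XS_{\f}\Vert_F\,\bigl\Vert D^{\dag}_{\xi}(\f)X^{-1}\bigr\Vert_F=\kappa_F\bigl(XS_{\f}X^{-1}\cdot(\,\cdot\,)\bigr),
\]
more precisely $\mu_F(X\cdot\f,\xi)=\Vert XS_{\f}\Vert_F\,\Vert (S_{\f})^{\dag}X^{-1}\Vert_F$ once we observe $D^{\dag}_{\xi}(\f)$ and $S_{\f}^{\dag}$ play the same role here. Actually the cleanest route is: the function $X\mapsto \log\mu_F(X\cdot\f,\xi)$ equals the function $C_{S_{\f}}$ associated (in the sense of \cref{eq:C_A-def}, with the Frobenius norm on both sides and the cross-matrix taken to be $D^{\dag}_{\xi}(\f)$) to the matrix $S_{\f}$ under the group $G$ acting by left multiplication on $\CC^{m\times m}$.

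The second step is to verify the hypotheses of \cref{thm:general-linear-complexity} in its case (ii): we are given $G\leq\GL_m(\CC)$ symmetric with $T_m(\CC)\leq G$, acting by left multiplication, which is exactly that case. The only thing to check is that $S_{\f}$, or rather the relevant matrix, has full rank so that $\kappa_F$ is finite and \cref{thm:general-linear-complexity} applies. Here one must be slightly careful: $S_{\f}$ is $m\times m$ but could be rank-deficient if the $f_i$ are linearly dependent; however $\mu_F(\f,\xi)<\infty$ together with $D_{\xi}(\f)$ having rank $\min(m,n)$ handles the cases we need, and in the generic/intended setting $\f$ consists of linearly independent polynomials so $G_{\f}\succ0$. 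I would state the full-rank hypothesis explicitly (or note it follows from $\xi$ being a simple root when $m\le n$) and then invoke \cref{thm:general-linear-complexity} verbatim: gradient descent with step-size $\eta=1/8$ from $X_0=I_m$ reaches, after $T=O\bigl(\varepsilon^{-2}\log(\kappa_F(S_{\f})/\kappa_F^{\star})\max(m^3,n^3)\bigr)$ iterations, an $X$ with $\log(\kappa_F(XS_{\f})/\kappa_F^{\star})<\varepsilon$. Translating back through the identities of step~1, $\kappa_F(XS_{\f})$ (with the appropriate cross term) is $\mu_F(X\cdot\f,\xi)$ and $\kappa_F^{\star}=\mu_F^{\star}$, and $\kappa_F(S_{\f})=\mu_F(\f,\xi)$ (taking $X=I_m$), which is exactly the claimed bound.

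The third, more delicate point is that \cref{thm:general-linear-complexity} is stated for $C_A$ built from $A$ and its own pseudoinverse $A^{\dag}$, whereas here the ``denominator'' matrix is $D^{\dag}_{\xi}(\f)$, not $S_{\f}^{\dag}$. So I need to confirm that the two-sided framework of \cref{thm:main-g-convexity}\ref{thm:main-part-1} and the gradient formula of \cref{lem:gradient} genuinely allow an independent cross-matrix $B$ rather than forcing $B=A^{\dag}$ — and indeed \cref{eq:simultaneous-norm-min} and the surrounding discussion make exactly this point, that the cross condition number $\kappa(A,B)=\Vert A\Vert_1\Vert B\Vert_2$ with $B$ arbitrary fits the same geodesically convex picture, with $B=A^{\dag}$ merely the special case recovering $\kappa(A)$. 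In our situation, though, the shuffling action sends $(S_{\f},D^{\dag}_{\xi}(\f))\mapsto(XS_{\f},D^{\dag}_{\xi}(\f)X^{-1})$, which is precisely the left-right action $g\cdot A=XAY^{-1}$ on the first slot and $g\cdot B=YBX^{-1}$ on the second with $Y=I_n$; so the objective $\log\mu_F(X\cdot\f,\xi)$ is a bona fide instance of the simultaneous norm minimization \cref{eq:simultaneous-norm-min}, and all the machinery (gradient formula, Lipschitz bound, convergence analysis) underlying \cref{thm:general-linear-complexity} carries over unchanged — one simply replaces ``$A$ and $A^{\dag}$'' by ``$S_{\f}$ and $D^{\dag}_{\xi}(\f)$'' throughout. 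I expect this bookkeeping — making sure the cross-matrix version of \cref{thm:general-linear-complexity} is the one being cited, and that nothing in its proof used $B=A^{\dag}$ beyond what remains true for a general invertible-conjugation-transformed $B$ — to be the main (and essentially the only) obstacle; everything else is a direct substitution. If the paper's \cref{sec:general-complexity-linear} proves \cref{thm:general-linear-complexity} only in the $B=A^{\dag}$ form, I would instead cite it in tandem with the geodesic-convexity and gradient-Lipschitz statements that are manifestly proved for the cross condition number, and note that the constant-step-size analysis depends only on those two ingredients.
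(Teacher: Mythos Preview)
Your approach is essentially the same as the paper's: reduce to the linear case via $S_{\f}=\sqrt{G_{\f}}$ so that $\mu_F(X\cdot\f,\xi)=\Vert XS_{\f}\Vert_F\,\Vert D^{\dag}_{\xi}(\f)X^{-1}\Vert_F$, and then invoke the left-preconditioning iteration complexity. Your ``third, more delicate point'' is exactly right and the paper handles it the way you anticipate: rather than citing \cref{thm:general-linear-complexity} (which is stated for $B=A^{\dag}$), the paper first proves a cross-condition-number version, \cref{thm:cross-condition-iteration}, for arbitrary $A,B$ under the action $(X,Y)\cdot(A,B)=(XAY^{-1},YBX^{-1})$, and then applies it with $A=S_{\f}$, $B=D^{\dag}_{\xi}(\f)$ --- so no ad hoc bookkeeping is required.
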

We prove \cref{thm:shuffling-iteration} in \cref{sec:shuffling-algo}.

\subsubsection*{Shuffling and Linear Change of Variables: General Preconditioning}

We now present our most general result, considering both shuffling and variable transformations through the optimization of:
\[
C_{(\ff,\xi)}(X,Y) \coloneqq \log\mu_F\left( (X,Y)\cdot\ff, Y\xi \right),
\]
over a symmetric subgroup $G \leq \GL_m(\CC) \times \GL_n(\CC)$.

\begin{theorem}
\label{thm:change-iteration}
Suppose $G\leq\GL_m(\CC)\times\GL_n(\CC)$ is a symmetric subgroup with $T_m(\CC)\times T_n(\CC)\leq G$. 
Let $\ff=(f_1,\dots,f_m)\in\CC[x_1,\dots,x_n]^m$ be a polynomial system with \[
D\coloneqq \max_{i} \deg(f_i)
\] and $\xi\in\CC^n$.
Set $\mu_F^\star\coloneqq \inf_{(X,Y)\in G}\mu_F((X,Y)\cdot f,Y\xi)$.
Let $\frac{1}{2}>\varepsilon>0$.
Then, in \[
T=O\,\Bigg(\; \frac{\log(\mu_F(\f,\xi)/\mu_F^\star)}{\varepsilon^2}\,(m+n)^2\, \big(D+2\big)^{2m+2n-1}\;\Bigg)
\] steps, the first order iteration with step-size $\eta=(D+2)^{-1}$ and $(X_0,Y_0)=(I_m,I_n)$, reaches a group element $(X,Y)$ with \[
\log\left(\frac{\mu_F((X,Y)\cdot\ff,Y\xi)}{\mu_F^\star}\right)<\varepsilon.
\]
\end{theorem}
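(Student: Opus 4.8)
\textbf{Proof proposal for Theorem~\ref{thm:change-iteration}.}
The plan is to follow the same strategy that establishes \cref{thm:general-linear-complexity}, adapting the smoothness constant to the polynomial setting. The starting point is \crefpart{theorem}{thm:main-g-convexity}{thm:main-part-2}, which tells us that $C_{(\ff,\xi)}$ is a geodesically convex function on $\rquo{G/K}$, and the gradient formula of \cref{lem:gradient} applied to the reformulation $\mu_F((X,Y)\cdot\ff,Y\xi) = \Vert X S_{\ff}\Vert_F \,\Vert D^\dag_{\xi}(\ff)\,X^{-1}\Vert_F$ combined with the variable change $Y$. Concretely, set $B \coloneqq (X,Y)\cdot(\ff,\xi)$, i.e. work with the pair $\big(X S_{\ff}(Y), D^\dag_{Y\xi}((X,Y)\cdot\ff)\big)$, where $S_{\ff}(Y)$ is the square root of the Gram matrix of the $Y$-transformed system. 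Then the Riemannian gradient is the projection onto $\Lie(G)$ of an expression of the same shape as in \cref{eq:gradient-formula}, built from $BB^\ast$, $B^\ast B$ and the analogous pieces of the pseudoinverse. Geodesic convexity then guarantees that a gradient descent iterate with a suitably small fixed step-size converges, and the number of iterations to reach $\eps$-suboptimality scales like $\tfrac{1}{\eps^2}$ times (smoothness constant) times (distance${}^2$ from the initial point to the optimum), which by the standard sublinear rate for g-convex functions gives the $\log(\mu_F(\ff,\xi)/\mu_F^\star)/\eps^2$ factor.

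The crucial new ingredient, and the main obstacle, is controlling the \emph{geodesic smoothness} (Lipschitz constant of the gradient along geodesics) of $C_{(\ff,\xi)}$ and, relatedly, identifying the correct step-size. In the linear case the relevant operator is bilinear — $(X,Y)\cdot A = XAY^{-1}$ has weights bounded by a constant — and this is what produces the $\eta = 1/8$ and the $\max(m^3,n^3)$ dimension factor. For polynomial systems, the $Y$-action $f_i \mapsto f_i\circ Y^{-1}$ acts on the degree-$D$ part of the Bombieri--Weyl space, so the representation in which the gradient lives has weights of magnitude up to $O(D)$; equivalently, the moment map / gradient is no longer bounded by a universal constant but by something growing with $D$. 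The plan is therefore to invoke the general machinery of \cref{hardcore} on optimizing over Lie groups: express the $\GL_n(\CC)$-action on $\CC[x_1,\dots,x_n]_{\le D}^{\oplus m}$ as a polynomial representation, read off the \emph{weight margin}/\emph{diameter} of its weight polytope, which for the $D$-th symmetric power scales like $D$ in each of the $n$ coordinates, and combine it with the $m$ coordinates of the shuffling action. This is what forces the step-size $\eta = (D+2)^{-1}$ and inflates the dimension factor from $\max(m^3,n^3)$ to $(m+n)^2 (D+2)^{2m+2n-1}$ — the exponent $2m+2n-1$ reflecting the total number of torus directions ($m$ from shuffling, $n$ from the change of variables, doubled because the objective is a \emph{cross} condition number involving both $\ff$ and $D^\dag$, minus one for the overall scaling invariance).

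In more detail, the key steps in order are: (1) record the reformulation $\mu_F((X,Y)\cdot\ff,Y\xi) = \Vert X S_{\ff}(Y)\Vert_F\,\Vert D^\dag_{Y\xi}((X,Y)\cdot\ff)\Vert_F$ and verify $C_{(\ff,\xi)}$ is smooth and, by \crefpart{theorem}{thm:main-g-convexity}{thm:main-part-2}, geodesically convex; (2) write down the explicit Riemannian gradient via \cref{lem:gradient}-style computation, so that each gradient descent step is $(X,Y)\leftarrow e^{-\eta\nabla C_{(\ff,\xi)}(X,Y)}(X,Y)$; (3) bound the gradient norm and the geodesic-Hessian (smoothness) of $C_{(\ff,\xi)}$ in terms of $m$, $n$, and $D$, using the weight-polytope estimates for the relevant $\GL_m\times\GL_n$-representation from \cref{hardcore} — this gives smoothness $O((D+2))$ and bounds the per-step progress with $\eta=(D+2)^{-1}$; (4) bound the distance from $(I_m,I_n)$ to a minimizer by $O(\log(\mu_F(\ff,\xi)/\mu_F^\star))$ after appropriate rescaling, exactly as in the linear case; (5) plug these into the standard $O(\text{(smoothness)}\cdot\text{(dist)}^2/\eps)$ (here the $1/\eps^2$ arises because we are in the non-strongly-convex regime and only have a sublinear guarantee on the gradient norm, converted to function value) and collect the dimension bookkeeping to obtain the stated $T$. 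The only genuinely delicate estimate is step (3): one must be careful that the Gram-matrix square root $S_{\ff}(Y)$ and the Jacobian pseudoinverse depend on $Y$ in a way whose geodesic second derivative is still controlled by the $D$-scaling of the ambient representation, i.e. that passing to square roots and pseudoinverses does not destroy the smoothness bound; this is handled by the same convexity-of-$\Vert e^{tH}Me^{-tH}\Vert$ arguments underlying \cref{thm:main-g-convexity}, now carried out in the Bombieri--Weyl representation rather than on plain matrices.
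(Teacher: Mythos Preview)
Your proposal has the right high-level shape (smoothness bound from the weight norm of the $\GL_m\times\GL_n$-representation, step-size $(D+2)^{-1}$, gradient descent), but there is a genuine gap in how you pass from a gradient bound to a function-value bound, and this gap is exactly what the dimension factor $(m+n)^2(D+2)^{2m+2n-1}$ is paying for.

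Your step~(4) proposes to bound the geodesic distance from $(I_m,I_n)$ to a minimizer by $O(\log(\mu_F(\ff,\xi)/\mu_F^\star))$ and then invoke a standard $O(L\cdot\mathrm{dist}^2/\varepsilon)$ rate. This does not work here: the infimum $\mu_F^\star$ need not be attained in $G$, and even when it is, no such distance bound is available (nor is it proved anywhere in the paper). The paper avoids distance bounds entirely. Instead it views $C_{(\ff,\xi)}$ as the Kempf--Ness function of the vector $\ff\otimes D^\dag_{\xi}(\ff)$ in the representation $\PP_{n,\dd}\otimes\CC^{n\times m}$, computes the weight norm $N\le D+2$ and the weight margin $\gamma\ge (D+2)^{1-m-n}(m+n)^{-1}$ (\cref{lem:change-norm-margin}, via \cref{prop:margin-general-lb}), and then proceeds in two steps: (a) \cref{prop:grad-eps} gives $\Vert\nabla C_{(\ff,\xi)}\Vert\le\varepsilon'$ after $O\big(L\log(\mu_F/\mu_F^\star)/(\varepsilon')^2\big)$ iterations, where the numerator is simply $f(I)-f_\star$, not a distance; (b) non-commutative duality (\cref{thm:nc-duality}) converts $\Vert\nabla\Vert\le\varepsilon'$ into $\log(\mu_F/\mu_F^\star)\le\varepsilon$ provided $\varepsilon'=\gamma\varepsilon$. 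The factor $(m+n)^2(D+2)^{2m+2n-1}$ is precisely $L/\gamma^2$; it comes from the \emph{weight margin} appearing in the duality, not from a distance bound and not from ``doubling'' due to the cross structure as you suggest.

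Two smaller points. First, the $S_{\ff}(Y)$ reformulation you attempt is unnecessary and, as you yourself flag, delicate: once the $Y$-action is present, $S_{\ff}$ depends nonlinearly on $Y$ and the reduction to a matrix cross-condition number breaks down. The paper simply works directly with the tensor $\ff\otimes D^\dag_{\xi}(\ff)$ in the full representation; no square-root trick is needed. Second, your reading of the exponent $2m+2n-1$ is off: it is $2(m+n-1)$ from $\gamma^{-2}$ plus one from the smoothness constant $L$, and the $m+n$ is the rank of the ambient torus $T_m\times T_n$, not a doubling from the two factors of the condition number.
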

We prove \cref{thm:change-iteration} in \cref{sec:general-poly-algo}. One needs to note that the change of variables action changes all coefficients, and therefore acts on a space with dimension roughly $m \binom{d+n}{d}$. Even if one consider a linear system with $m \binom{n+d}{d} > m \; d^n$ (assuming $n>d$) many entries, preconditioning will reflect this dimensionality in the obtained complexity estimates. Therefore, we consider $D^{2m+2n-1}$ natural, however, it could be potentially improved to its $4$th root.
\subsection*{Torus Action for Sparse Polynomials}
We briefly mention that another approach to preserve sparsity is to consider a torus action on the variables in addition to shuffling equations. Our framework also handles this action as we show it is geodesically convex in  \Cref{sparse}.

\section{Experimental evaluation}
\label{sec:additional-experiments}

We present experimental results to 
evaluate the performance of our preconditioning algorithms.
Our implementation
relies  on the Julia packages \texttt{Manopt.jl} and \texttt{Manifolds.jl} \cite{Manoptjl,Manifoldsjl}, which enable efficient geodesically convex optimization over matrix manifolds. Our (quite simple) code is available at \href{https://github.com/mldogan/Preconditioning-via-g-convex-optimization}{our git page}. 

We evaluate preconditioners on $425$ sparse matrices from the \texttt{SuiteSparse} collection \cite{Kolodziej2019}, each with $\leq 1200$ rows. 
For each matrix in the collection, we compute both the optimal diagonal left-preconditioner and the optimal block-diagonal left-preconditioner, using a fixed block size of $5 \times 5$.
The results appear in \cref{fig:Sparse-Frobenius-BlockvsDiag-Copy},
and demonstrate consistent improvement of the condition number for both schemes:
\begin{itemize}
\item The Frobenius condition number $\kappa_F$ improves by $\approx 1602\times$ on average with block-diagonal preconditioning, compared to $\approx 601\times$ for diagonal preconditioning.
\item The block-diagonal preconditioner's improvement is $2.43\times$ greater than that of the diagonal preconditioner.
\end{itemize}

\begin{figure}[htbp]
  \centering
  \begin{subfigure}[b]{0.32\textwidth}
    \centering
    \includegraphics[width=\textwidth]{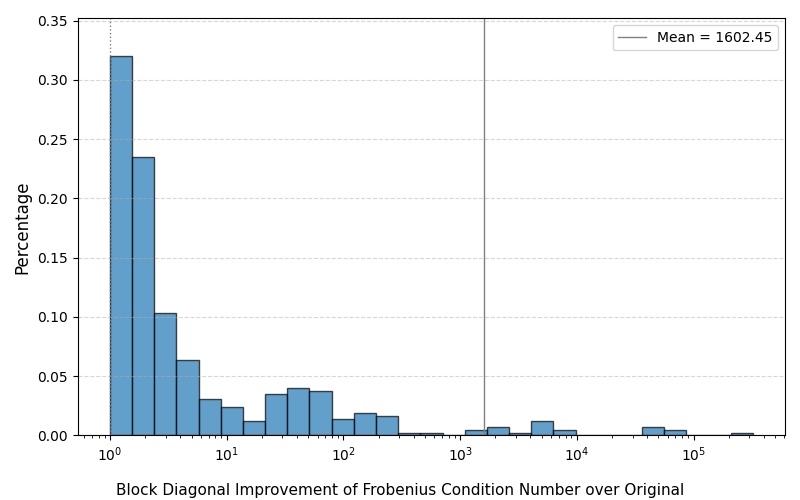}
    \phantomsubcaption
  \end{subfigure}
  \hfill
  \begin{subfigure}[b]{0.32\textwidth}
    \centering
    \includegraphics[width=\textwidth]{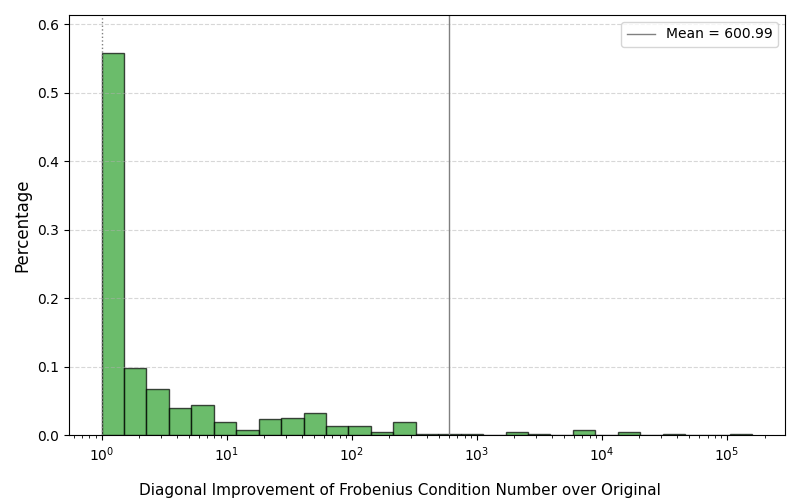}
    \phantomsubcaption
  \end{subfigure}
  \hfill
  \begin{subfigure}[b]{0.32\textwidth}
    \centering
    \includegraphics[width=\textwidth]{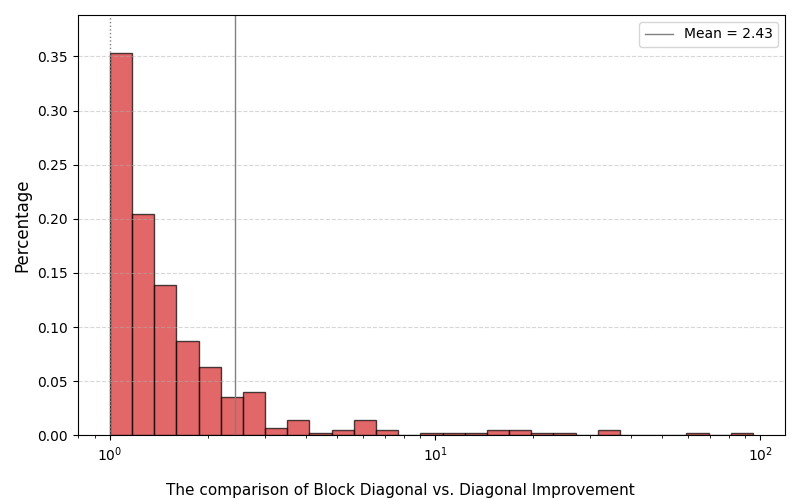}
    \phantomsubcaption
  \end{subfigure}
  \caption{The figure shows the improvement (defined as $\kappa_F(A)/\kappa_F^\star$) of the Frobenius condition number for the block diagonal preconditioner with $5\times 5$ blocks (blue) and for the diagonal preconditioner (green) for a collection of $425$ \texttt{SuiteSparse} matrices. On average, the block diagonal preconditioner with $5\times 5$-blocks improve the Frobenius condition number by $\approx 1602$ whereas the diagonal preconditioner improves it by $\approx 601$. The red plot shows the ratio of the improvement of the block-diagonal preconditioner vs. diagonal preconditioner. On average, the improvement of the optimal block-diagonal preconditioner is $\approx 2.43$ times the improvement of the optimal diagonal preconditioner.}
  \label{fig:Sparse-Frobenius-BlockvsDiag-Copy}
\end{figure}

Some additional experimental results, along with further performance metrics and figures, appear in \cref{sec:detailed-experiments}.

\newpage

\section{Geodesic Convexity in Matrix Groups} \label{hardcore}
In this section we collect facts and lemmas we will use later in the proofs.

\subsection{Groups and representations}
We start by introducing the geometric and algebraic foundations for geodesically convex optimization on matrix groups. 
We focus on symmetric subgroups of $\GL_m(\CC)$ and their representations, which provide the setting for our optimization framework. 
For a comprehensive treatment of symmetric Lie groups and their geometry, we refer to \cite{Wallach-17}, while our notation and key results on geodesic convexity follow \cite{BFGOWW-19}.

A subgroup $G \leq \GL_m(\CC)$ is called \emph{symmetric} if it is Zariski-closed (i.e., defined by polynomial equations in the matrix entries $g_{ij}$ and $\det^{-1}(g)$) and closed under conjugate transposition ($g \in G \implies g^* \in G$). 
The Lie algebra of $G$, denoted $\Lie(G)$, consists of all matrices $H \in \CC^{m \times m}$ such that $e^{tH} \in G$, for all $t \in \RR$. 
The Lie algebra is naturally identified with the tangent space of $G$ at the identity.

 We denote by $\rmU_m(\CC)=\{g\in \GL_m(\CC)\mid gg^\ast = I_m\}$ the unitary group. 
For a symmetric group $G$, the intersection $K \coloneqq G \cap \rmU_m(\CC)$ is a \emph{maximal compact subgroup} of $G$, its Lie algebra consists of skew-Hermitian matrices in $\Lie(G)$:
\[
\Lie(K) = \{H\in\Lie(G)\mid H^\ast + H=0\}.
\] 
The Lie algebra of $G$ decomposes as a real vector space 
 \[
\Lie(G) = \Lie(K) \oplus \ii\Lie(K),
\]
 where $\ii \Lie(K)$ consists of Hermitian matrices. 
This decomposition plays a crucial role in the geometry of $G$
and it is orthogonal with respect to the Frobenius inner product
 \begin{equation}
\label{eq:Frob-inner}
\langle H_1, H_2 \rangle = \mathrm{Re}(\tr(H_1^* H_2)).
\end{equation}
Note that \eqref{eq:Frob-inner} is a positive definite inner product on $\ii\Lie(K)$.

Let $V$ be an $N$-dimensional complex vector space.
A \emph{rational representation} of $G$ is a group homomorphism $\pi \colon G \to \GL(V)$, where the matrix entries (with respect to any basis of $V$) are rational functions of $g_{ij}$ and $\det^{-1}(g)$. 
The associated Lie algebra representation $\Pi \colon \Lie(G) \to \mathrm{End}(V)$ is \begin{equation}
\label{eq:Lie-alg-action}
\Pi(H) v = \left. \frac{\mathrm{d}}{\mathrm{d}t} \right|_{t=0} \pi(e^{tH})v ,
\end{equation} 
and satisfies $e^{\Pi(H)} = \pi(e^H)$ for all $H \in \Lie(G)$.

Assume that $G$ contains the diagonal torus $T_m(\CC) \cong (\CC^\times)^m$, the subgroup of $\GL_m(\CC)$ that consists of invertible diagonal matrices.
When restricted to the action of $T_m(\CC)$, $V$ decomposes into weight spaces:
\[
V = \bigoplus_{i=1}^N V_{\omega_i} \enspace,
\]
where $\omega_i \in \ZZ^m$ are the \emph{weights} of $\pi$, and $V_{\omega_i}$ is the one-dimensional simultaneous eigenspace for the action of $T_m(\CC)$ with character $t \mapsto t^{\omega_i} = t_1^{\omega_{i1}} \cdots t_m^{\omega_{im}}$. 
That is, $t\cdot v = t^{\omega_i} v$ for every $v\in V_{\omega_i}$.
The \emph{weight matrix} $M \in \ZZ^{N \times m}$ encodes the weights as rows.

\subsection{Geodesically convex functions}
Let's consider \[
\rquo{G/K} \coloneqq \{Kg\mid g\in G\},
\] the set of right cosets of $K$.
The Lie group structure of $G$ makes it a smooth manifold.
The transitive right action of $G$ on $\rquo{G/K}$ defines the structure of a smooth manifold on $\rquo{G/K}$, with the natural map $G\rightarrow\rquo{G/K}$ being a smooth submersion.
The tangent space of $\rquo{G/K}$ at the identity (i.e., the coset $K$) is naturally identified with $\Lie(G)/\Lie(K)\cong \ii\Lie(K)$,
and the Frobenius inner product \eqref{eq:Frob-inner} makes $\rquo{G/K}$ a Riemannian manifold.
It is known that $\rquo{G/K} = {Kg \mid g \in G}$, equipped with this Riemannian metric, is a symmetric space with non-positive sectional curvature \cite[Chapter~10]{Petersen-16}. 

The general form of a geodesic of $\rquo{G/K}$ is \[
\gamma : \RR\rightarrow \rquo{G/K}, \quad \gamma(t)=K e^{tH}g 
\] for $H\in\ii\Lie(K)$. 
On $\rquo{G/K}$, we study functions that are convex along geodesics:
\begin{definition}
A function $f \colon \rquo{G/K} \to \RR$ is \emph{geodesically convex} if for all $g \in G$ and $H \in \ii \Lie(K)$, the map $t \mapsto f(Ke^{tH}g)$ is convex in $t \in \RR$.
\end{definition}


\begin{example}[The Kempf-Ness function]
\label{ex:Kempf-Ness}
A fundamental example of a geodesically convex function arises from the log-norm function.
Let $\pi \colon G \to \GL(V)$ be a rational representation, and fix a $K$-invariant Hermitian inner product on $V$. For $v \neq 0$, the \emph{Kempf-Ness function} is:
 \[
	F_v(Kg) \coloneqq \log\Vert g\cdot v\Vert.
\] By \cite[Proposition~3.13]{BFGOWW-19}, $F_v$ is geodesically convex.
\end{example}

\begin{definition}[Gradient and Hessian]
    Let $f \colon \rquo{G/K} \to \RR$ be a smooth function. 
    The gradient of $f$ at $Kg$, denoted $\nabla f(Kg)$, is the unique tangent vector in $\ii\Lie(K)$ satisfying 
    \begin{equation}
        \label{eq:grad-def}
        \langle \nabla f(Kg), H \rangle = \left. \frac{\mathrm{d}}{\mathrm{d}t} \right|_{t=0} f\big(K e^{tH} g\big) \quad \text{for all } H \in \ii\Lie(K),
    \end{equation}
    where the inner product is the Frobenius inner product \eqref{eq:Frob-inner}.
    
    The Hessian $\nabla^2 f(Kg)$ of $f$ at $Kg$ is the linear operator $\nabla^2 f(Kg):\ii\Lie(K)\rightarrow\ii\Lie(K)$ defined by  \begin{equation}
        \label{eq:def-Hess}
        \langle H, \nabla^2 f(Kg) H \rangle = \left. \frac{\mathrm{d}^2}{\mathrm{d}t^2} \right|_{t=0} f\left(Ke^{tH}g\right),
    \end{equation} for all $H\in\ii\Lie(K)$.
\end{definition}
We will use the formulas for the gradient and the Hessian of the Kempf-Ness function.
\begin{proposition}
\label{prop:grad-Hess-KN}
Suppose $\pi:G\rightarrow\GL(V)$ is a rational representation with corresponding Lie algebra representation $\Pi:\Lie(G)\rightarrow\mathrm{End}(V)$.
Let $v\in V$. 
Then the gradient and the Hessian of the Kempf-Ness function $F_v$ (\cref{ex:Kempf-Ness}) are given by the formulas \begin{equation}
\label{eq:KN-grad-Hess-formulas}
	\begin{split}
	\langle \nabla F_v(Kg), H\rangle &= \langle \Pi(H) w, w\rangle \text{ and } \\
	\frac{1}{2}\langle \nabla^2 F_v(Kg)H, H\rangle &= \Vert \Pi(H)w\Vert^2 - \langle \Pi(H) w,w\rangle^2 , 
	\end{split}
	\end{equation} where $w\coloneqq \frac{g\cdot v}{\Vert g\cdot v\Vert}$.
\end{proposition}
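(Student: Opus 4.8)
The plan is to compute the first and second derivatives of $t\mapsto F_v(Ke^{tH}g)=\log\Vert\pi(e^{tH}g)\cdot v\Vert$ directly and read off the claimed formulas by evaluating at $t=0$. Writing $u(t)\coloneqq\pi(e^{tH})\,(g\cdot v)=e^{t\Pi(H)}(g\cdot v)$, we have $F_v(Ke^{tH}g)=\tfrac12\log\langle u(t),u(t)\rangle$, and since $\Pi(H)$ is self-adjoint (because $H\in\ii\Lie(K)$ is Hermitian and the inner product is $K$-invariant, so $\Pi$ sends the Hermitian part of $\Lie(G)$ to self-adjoint operators), we get $u'(t)=\Pi(H)u(t)$. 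The first derivative is then
\[
\frac{\diff}{\diff t}F_v(Ke^{tH}g)=\frac{\langle u'(t),u(t)\rangle}{\langle u(t),u(t)\rangle}=\frac{\langle \Pi(H)u(t),u(t)\rangle}{\Vert u(t)\Vert^2}.
\]
Evaluating at $t=0$, where $u(0)=g\cdot v$ and $w=u(0)/\Vert u(0)\Vert$, and using the defining property \eqref{eq:grad-def} of the gradient, gives $\langle\nabla F_v(Kg),H\rangle=\langle\Pi(H)w,w\rangle$, which is the first formula.

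For the Hessian, I would differentiate the first-derivative expression once more. Set $a(t)\coloneqq\langle\Pi(H)u(t),u(t)\rangle$ and $b(t)\coloneqq\Vert u(t)\Vert^2$, so the first derivative is $a/b$. Using $u'=\Pi(H)u$ and self-adjointness of $\Pi(H)$, we get $a'(t)=2\langle\Pi(H)^2u(t),u(t)\rangle=2\Vert\Pi(H)u(t)\Vert^2$ and $b'(t)=2\langle\Pi(H)u(t),u(t)\rangle=2a(t)$. Hence
\[
\frac{\diff^2}{\diff t^2}F_v(Ke^{tH}g)=\frac{a'b-ab'}{b^2}=\frac{2\Vert\Pi(H)u(t)\Vert^2}{\Vert u(t)\Vert^2}-\frac{2\langle\Pi(H)u(t),u(t)\rangle^2}{\Vert u(t)\Vert^4}.
\]
Evaluating at $t=0$ and normalizing by $w=(g\cdot v)/\Vert g\cdot v\Vert$ yields $\tfrac{\diff^2}{\diff t^2}\big|_{t=0}F_v(Ke^{tH}g)=2\Vert\Pi(H)w\Vert^2-2\langle\Pi(H)w,w\rangle^2$, so by the definition \eqref{eq:def-Hess} of the Hessian, $\tfrac12\langle\nabla^2F_v(Kg)H,H\rangle=\Vert\Pi(H)w\Vert^2-\langle\Pi(H)w,w\rangle^2$, as claimed. (As a sanity check, the right-hand side is $\geq 0$ by Cauchy--Schwarz since $\Vert w\Vert=1$, which re-proves geodesic convexity from \cref{ex:Kempf-Ness}.)

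The only genuinely delicate point is justifying that $\Pi(H)$ is self-adjoint with respect to the chosen $K$-invariant inner product on $V$. This follows because $K$-invariance of the inner product means $\pi(k)$ is unitary for $k\in K$, hence $\Pi$ maps $\Lie(K)$ to skew-Hermitian operators; since $H\in\ii\Lie(K)$, the operator $\Pi(H)=\ii\,\Pi(H')$ with $H'\in\Lie(K)$ skew-Hermitian image, so $\Pi(H)$ is Hermitian. Everything else is an elementary computation with $e^{t\Pi(H)}$, and the identity $e^{\Pi(H)}=\pi(e^H)$ recorded after \eqref{eq:Lie-alg-action} guarantees $u(t)=\pi(e^{tH})(g\cdot v)$ so that the chain rule applies cleanly; differentiating under the inner product is legitimate since $u(t)$ is real-analytic in $t$.
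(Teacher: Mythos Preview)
Your proof is correct and follows essentially the same route as the paper: define $u(t)=e^{t\Pi(H)}(g\cdot v)$, differentiate $\tfrac12\log\Vert u(t)\Vert^2$ twice using $u'=\Pi(H)u$, and evaluate at $t=0$. The only difference is cosmetic---you make the self-adjointness of $\Pi(H)$ explicit (which the paper uses tacitly when it collapses $\tfrac12(\langle u,u'\rangle+\langle u',u\rangle)$ to $\langle\Pi(H)u,u\rangle$ and writes $\langle\Pi(H)^2u,u\rangle$ for $\Vert\Pi(H)u\Vert^2$), and you organize the second derivative via the quotient rule on $a/b$ rather than differentiating the fraction directly.
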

\begin{proof}
Let $H\in\ii\Lie(K)$ and denote $u(t)\coloneqq e^{tH}g\cdot v$. 
Then, $u(0) = g\cdot v$ and $w = \frac{u(0)}{\Vert u(0)\Vert}$.
By \eqref{eq:Lie-alg-action}, we have $u'(t)=\Pi(H)u(t)$.

We write $F_v(Ke^{tH}g) = \frac{1}{2}\log\Vert u(t)\Vert^2$. 
The chain rule yields \[
\frac{\mathrm{d}}{\mathrm{d}t} F_v\left(Ke^{tH}g\right) = \frac{1}{2}\frac{\langle u(t),u'(t)\rangle + \langle u'(t),u(t)\rangle}{\Vert u(t)\Vert^2} = \frac{\langle \Pi(H)u(t),u(t)\rangle }{\Vert u(t)\Vert^2}.
\] Evaluating at $t=0$ gives $\langle \nabla F_v(Kg),H\rangle = \langle \Pi(H)w,w\rangle$.

For the Hessian, we take another derivative: \[
\frac{\mathrm{d}^2}{\mathrm{d}t^2} F_v\left(Ke^{tH}g\right) = \frac{2\, \langle \Pi(H)^2 u(t), u(t)\rangle\, \Vert u(t)\Vert^2 - 2 \,\langle \Pi(H)u(t),u(t)\rangle^2}{\Vert u(t)\Vert^4}
\] Evaluating at $t=0$ and using $w=u(0)/\Vert u(0)\Vert$ yields \eqref{eq:KN-grad-Hess-formulas}.
\end{proof}

\begin{remark}
Geodesic convexity of a smooth function $f$ is equivalent to $\nabla^2 f(Kg) \succeq 0$ for all $Kg \in \rquo{G/K}$.
For the Kempf-Ness function this follows from an application of the Cauchy-Schwarz inequality to \eqref{eq:KN-grad-Hess-formulas}. 
For optimization, we further quantify smoothness:
\end{remark}

\begin{definition}[$L$-smoothness]
\label{def:L-smooth}
A geodesically convex function $f$ is \emph{$L$-smooth} if $\nabla^2 f(Kg) \preceq L \cdot I$ for all $Kg \in \rquo{G/K}$, i.e., its Hessian's largest eigenvalue is bounded by $L$.
\end{definition}

The Taylor expansion of $f(Ke^{tH}g)$ yields a key inequality for $L$-smooth functions: see for example~\cite[Lemma~3.8]{BFGOWW-19}.
\begin{lemma}
\label{lem:Taylor-smoothness}
    If $f$ is $L$-smooth and $Kh = K e^{H} g$, then:
    \[
    f(Kh) \leq f(Kg) + \langle \nabla f(Kg), H \rangle + \frac{L}{2} \Vert H \Vert_F^2.
    \]
\end{lemma}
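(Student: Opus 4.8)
The plan is to reduce the inequality to a one--dimensional Taylor estimate along the geodesic $\gamma(t)\coloneqq Ke^{tH}g$, which connects $\gamma(0)=Kg$ to $\gamma(1)=Kh$. Put $\phi(t)\coloneqq f(\gamma(t))$; this is a smooth real-valued function on $[0,1]$ with $\phi(0)=f(Kg)$ and $\phi(1)=f(Kh)$. By the definition of the Riemannian gradient in \eqref{eq:grad-def}, $\phi'(0)=\langle\nabla f(Kg),H\rangle$. Thus the claim follows once we bound $\phi''$ on $[0,1]$ by $L\Vert H\Vert_F^2$.

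The key point is that, for any fixed $s\in\RR$, the reparametrized curve $t\mapsto\gamma(s+t)=Ke^{tH}(e^{sH}g)$ is itself the geodesic issuing from $\gamma(s)$ in the direction of $H\in\ii\Lie(K)$ (translated to the coset $Ke^{sH}g$), and this translation is an isometry of $\rquo{G/K}$, so the direction keeps Frobenius norm $\Vert H\Vert_F$. Hence, applying the definition of the Hessian \eqref{eq:def-Hess} at the base point $\gamma(s)$,
\[
\phi''(s)=\left.\frac{\diff^2}{\diff t^2}\right|_{t=0} f\big(Ke^{tH}(e^{sH}g)\big)=\big\langle H,\nabla^2 f(\gamma(s))\,H\big\rangle .
\]
Since $f$ is $L$-smooth (\cref{def:L-smooth}), $\nabla^2 f(\gamma(s))\preceq L\cdot I$, and therefore $\phi''(s)\leq L\Vert H\Vert_F^2$ for every $s\in[0,1]$.

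It remains to combine these facts. The exact first--order Taylor formula with integral remainder gives
\[
\phi(1)=\phi(0)+\phi'(0)+\int_0^1(1-s)\,\phi''(s)\,\diff s\ \leq\ \phi(0)+\phi'(0)+\frac{L}{2}\Vert H\Vert_F^2 ,
\]
which is precisely the asserted inequality after substituting $\phi(0)=f(Kg)$, $\phi(1)=f(Kh)$ and $\phi'(0)=\langle\nabla f(Kg),H\rangle$. The only genuinely nontrivial step is the identity $\phi''(s)=\langle H,\nabla^2 f(\gamma(s))H\rangle$: it relies on the homogeneity of the symmetric space $\rquo{G/K}$ (restarting the one-parameter subgroup $e^{tH}$ at any time yields the same geodesic direction, with the metric preserved by the $G$-action), which is exactly what fails on a general Riemannian manifold, where one would instead have to invoke parallel transport of $H$. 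Everything else is the standard descent lemma; cf.\ \cite[Lemma~3.8]{BFGOWW-19}.
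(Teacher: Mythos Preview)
Your proof is correct and follows exactly the approach the paper indicates: the paper does not spell out a proof but simply states that the inequality follows from the Taylor expansion of $f(Ke^{tH}g)$ and cites \cite[Lemma~3.8]{BFGOWW-19}, which is precisely the one-dimensional Taylor-with-integral-remainder argument you carry out.
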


For an $L$-smooth geodesically convex function $f$, we define the \emph{smooth gradient descent} with starting point $Kg_0$ as the following iteration: \begin{equation}
	\label{eq:smooth-grad-desc}
	K g_{i+1} = K \, \exp\left( -\frac{1}{L}\, \nabla f(Kg_i) \right)\, g_i,\quad g_0=I_m,
\end{equation} where $\exp$ denotes the matrix exponential.

\cref{lem:Taylor-smoothness} implies the convergence of the smooth gradient descent \cite[Theorem~4.1]{BFGOWW-19}; we provide a proof for completeness.

\begin{proposition}[Iteration complexity]
\label{prop:grad-eps}
    Let $f \colon \rquo{G/K} \rightarrow \RR$ be geodesically convex and $L$-smooth. 
    Let $f_\star \coloneqq \inf_{Kg \in \rquo{G/K}} f(Kg)$. 
    Then, for any $\varepsilon > 0$, the gradient descent iteration in \eqref{eq:smooth-grad-desc}, starting from $g_0 = I$, after $T$ iterations, results a point $g_T$ satisfying $\Vert \nabla f(Kg_T) \Vert \leq \varepsilon$, where 
    \[
    T = \frac{2L}{\varepsilon^2} (f(I) - f_\star) .
    \]
\end{proposition}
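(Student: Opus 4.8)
The plan is to run the standard telescoping argument for gradient descent on an $L$-smooth geodesically convex function, specialized to the manifold $\rquo{G/K}$. The only tools needed are the descent inequality of \cref{lem:Taylor-smoothness} and the fact that the geodesics through $Kg_i$ have the form $t\mapsto Ke^{tH}g_i$; the geodesic convexity of $f$ is not even required for this particular statement, since we are only asking for a point with small \emph{gradient} (not small function value), so I expect the proof to use $L$-smoothness alone.

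First I would apply \cref{lem:Taylor-smoothness} with $Kg = Kg_i$, $H = -\frac{1}{L}\nabla f(Kg_i)$, and $Kh = Kg_{i+1}$ as defined by the iteration \eqref{eq:smooth-grad-desc}. This gives
\[
f(Kg_{i+1}) \;\leq\; f(Kg_i) + \Big\langle \nabla f(Kg_i), -\tfrac{1}{L}\nabla f(Kg_i)\Big\rangle + \frac{L}{2}\Big\Vert \tfrac{1}{L}\nabla f(Kg_i)\Big\Vert_F^2 \;=\; f(Kg_i) - \frac{1}{2L}\Vert \nabla f(Kg_i)\Vert_F^2.
\]
Thus each step decreases $f$ by at least $\frac{1}{2L}\Vert \nabla f(Kg_i)\Vert_F^2$. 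Rearranging, $\Vert \nabla f(Kg_i)\Vert_F^2 \leq 2L\big(f(Kg_i) - f(Kg_{i+1})\big)$.

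Next I would sum this inequality over $i = 0,1,\dots,T-1$ and telescope: $\sum_{i=0}^{T-1}\Vert \nabla f(Kg_i)\Vert_F^2 \leq 2L\big(f(Kg_0) - f(Kg_T)\big) \leq 2L\big(f(I) - f_\star\big)$, using $g_0 = I$ and $f(Kg_T)\geq f_\star$. Since the left side is a sum of $T$ nonnegative terms, the minimum term is at most the average, so $\min_{0\leq i < T}\Vert \nabla f(Kg_i)\Vert_F^2 \leq \frac{2L(f(I)-f_\star)}{T}$. Choosing $T = \frac{2L}{\varepsilon^2}(f(I)-f_\star)$ makes this bound at most $\varepsilon^2$, i.e. there is some $i < T$ with $\Vert \nabla f(Kg_i)\Vert_F \leq \varepsilon$. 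A minor wrinkle is that the statement asserts $\Vert\nabla f(Kg_T)\Vert\leq\varepsilon$ at the final iterate rather than at the best iterate; I would address this either by noting the convention that one outputs (or simply relabels as $g_T$) the best iterate encountered, or by observing that under geodesic convexity combined with the monotone decrease of $f$ one can run until the gradient norm first drops below $\varepsilon$, which happens within $T$ steps by the averaging bound.

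The argument is essentially routine; the only point requiring a little care is making sure \cref{lem:Taylor-smoothness} applies with exactly the update rule \eqref{eq:smooth-grad-desc}, which it does because that lemma is stated precisely for $Kh = Ke^{H}g$ with arbitrary $H\in\ii\Lie(K)$ and the gradient $\nabla f(Kg_i)$ lies in $\ii\Lie(K)$ by definition. So there is no real obstacle here — this is the textbook $O(1/\varepsilon^2)$ nonconvex-style gradient-norm bound transplanted to the Hadamard manifold setting, and the proof is three lines of algebra plus a telescoping sum.
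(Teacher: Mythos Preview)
Your core argument --- applying \cref{lem:Taylor-smoothness} to get the descent inequality $f(Kg_{i+1})\le f(Kg_i)-\tfrac{1}{2L}\Vert\nabla f(Kg_i)\Vert^2$ and then telescoping --- is exactly what the paper does (the paper phrases it as a proof by contradiction, but the content is identical).

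Where you diverge is at the ``wrinkle'' you correctly flag: the statement concerns the \emph{final} iterate $g_T$, not the best one. Your two proposed fixes (relabel the best iterate, or stop early) do not prove the proposition as written. The paper closes this gap by invoking an external result \cite[Lemma~3.8]{Hirai-Sakabe-25}: for a geodesically convex $L$-smooth function, the gradient norm $\Vert\nabla f(Kg_i)\Vert$ is \emph{non-increasing} along the gradient-descent iterates. Once some $t\le T$ has $\Vert\nabla f(Kg_t)\Vert\le\varepsilon$, monotonicity gives $\Vert\nabla f(Kg_T)\Vert\le\varepsilon$ as well. Note that this monotonicity genuinely uses geodesic convexity, so your remark that ``geodesic convexity is not even required'' is not quite right for the statement as phrased --- it would be right only for the best-iterate version.
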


\begin{proof}
    Assume $\Vert \nabla f(Kg_i) \Vert > \varepsilon$ for all $i = 0,\dots,T$. 
    From Lemma \ref{lem:Taylor-smoothness}, we have:
    \[
    \begin{split}
        f(Kg_{i+1}) - f(Kg_i) &\leq -\frac{1}{L} \Vert \nabla f(Kg_i) \Vert^2 + \frac{1}{2L} \Vert \nabla f(Kg_i) \Vert^2 \\
        &= -\frac{1}{2L} \Vert \nabla f(Kg_i) \Vert^2 \\
        &< -\frac{\varepsilon^2}{2L}.
    \end{split}
    \]
    Summing over $i = 0,\dots,T-1$ yields:
    \[
    f(Kg_T) - f(Kg_0) < -\frac{\varepsilon^2 T}{2L} \leq f_\star - f(Kg_0),
    \]
    which implies $f(Kg_T) < f_\star$ - a contradiction. Thus, $\Vert \nabla f(Kg_t) \Vert \leq \varepsilon$ for some $t \leq T$. By \cite[Lemma~3.8]{Hirai-Sakabe-25}, the gradient norm is non-increasing, so $\Vert \nabla f(Kg_T) \Vert \leq \Vert \nabla f(Kg_t) \Vert \leq \varepsilon$ as claimed.
\end{proof}

\subsection{Non-commutative duality for the log-norm function}
Suppose $f:\rquo{G/K}\rightarrow\RR$ is a geodesically convex function, and $f_\star = \inf_{g\in G} f(Kg)$.
The convexity implies that \[
f(Kg) = f_\star \quad\iff\quad \nabla f(Kg)=0.
\] In \cite{BFGOWW-19}, the authors gave a quantitative generalization of this equivalence for the Kempf-Ness function $F_v$, associated to a representation $\pi:G\rightarrow\GL(V)$ and $0\neq v\in V$.

\begin{definition}
\label{def:weight-norm-margin}
Assume that $T_m(\CC)\leq G\leq\GL_m(\CC)$ is symmetric and $\pi:G\rightarrow\GL(V)$ is a rational representation with weights $\Omega(\pi)=\{\omega_1,\omega_2,\dots,\omega_N\}\subset\ZZ^m$. 
	\begin{itemize}
		\item The \emph{weight norm} of $\pi$ is defined as \[
		N(\pi)\coloneqq \max_{\omega\in\Omega(\pi)} \Vert \omega\Vert_2.
		\]
		\item The \emph{weight margin} of $\pi$ is defined as \[
    \gamma(\pi) \coloneqq \min \left\{ 
        \mathrm{dist}\big(0, \conv(S)\big) \mid S \subset \Omega(\pi), \, 0 \notin \conv(S) 
    \right\},
    \]
    where $\mathrm{dist}(0, \conv(S)) = \min \{\|p\|_2 \mid p \in \conv(S)\}$ is the Euclidean distance from the origin to $\conv(S)$.
	\end{itemize}
\end{definition}
The weight norm and the weight margin quantifies the proximity to the optimal value in terms of the gradient norm:
\begin{theorem}[Non-commutative duality]
\label{thm:nc-duality}
	Let $\pi:G\rightarrow\GL(V)$ be a rational representation of a symmetric subgroup $T_m(\CC)\leq G\leq \GL_m(\CC)$. 
	Let $0\neq v\in V$ and $F_v(Kg)=\log\Vert g\cdot v\Vert$ be the Kempf-Ness function associated to $v$. 
	Then, \[
	1 - \frac{\Vert\nabla F_v(Kg)\Vert_F}{\gamma(\pi)} \, \leq \, \frac{ \inf_{h\in G} \, \Vert h\cdot v\Vert^2 }{ \Vert g\cdot v\Vert^2 } \, \leq \,1 - \frac{\Vert\nabla F_v(Kg)\Vert^2_F}{4N(\pi)^2}. 	\]
\end{theorem}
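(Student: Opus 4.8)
The plan is to prove the two inequalities separately, after reducing to $g=I$. Put $v':=g\cdot v$. Since $h\mapsto hg$ is a bijection of $G$, $\inf_{h\in G}\Vert h\cdot v'\Vert=\inf_{h\in G}\Vert h\cdot v\Vert$; also $\Vert v'\Vert=\Vert g\cdot v\Vert$, and because $F_v(Ke^{tH}g)=\log\Vert\pi(e^{tH})v'\Vert=F_{v'}(Ke^{tH})$ we get $\nabla F_v(Kg)=\nabla F_{v'}(K)$. So it suffices to treat $g=I$; write $w:=v/\Vert v\Vert$ and $P:=\nabla F_v(K)\in\ii\Lie(K)$. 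Two facts about $\pi$ will be used repeatedly, both following from \cref{prop:grad-Hess-KN} and the estimate $\Vert\Pi(H)\Vert_{\mathrm{op}}\leq N(\pi)\Vert H\Vert_F$ for Hermitian $H\in\ii\Lie(K)$ (which holds because, by a Kostant-type convexity statement, the eigenvalues of the Hermitian operator $\Pi(H)$ are bounded in absolute value by $\max_{\omega\in\Omega(\pi)}|\langle\omega,\lambda\rangle|\leq N(\pi)\Vert H\Vert_F$, $\lambda$ the eigenvalue vector of $H$; see \cite{BFGOWW-19}): (a) Cauchy--Schwarz in $\langle P,H\rangle=\langle\Pi(H)w,w\rangle$ gives $\Vert P\Vert_F\leq N(\pi)$; (b) $F_v$ is geodesically $L$-smooth with $L=2N(\pi)^2$, since $\tfrac12\langle\nabla^2F_v(Kg)H,H\rangle=\Vert\Pi(H)w'\Vert^2-\langle\Pi(H)w',w'\rangle^2\leq\Vert\Pi(H)w'\Vert^2\leq N(\pi)^2\Vert H\Vert_F^2$ at every coset $Kg$, where $w'=g\cdot v/\Vert g\cdot v\Vert$.

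\emph{Upper bound.} Run one step of smooth gradient descent \eqref{eq:smooth-grad-desc} from $g_0=I$, so $g_1=\exp(-\tfrac1LP)\in G$. By \cref{lem:Taylor-smoothness},
\[
F_v(Kg_1)\;\leq\;F_v(K)+\big\langle P,-\tfrac1LP\big\rangle+\tfrac L2\big\Vert\tfrac1LP\big\Vert_F^2\;=\;\log\Vert v\Vert-\frac{\Vert P\Vert_F^2}{4N(\pi)^2}.
\]
Exponentiating and using $\inf_{h\in G}\Vert h\cdot v\Vert^2\leq\Vert g_1\cdot v\Vert^2=e^{2F_v(Kg_1)}$,
\[
\frac{\inf_{h\in G}\Vert h\cdot v\Vert^2}{\Vert v\Vert^2}\;\leq\;\exp\!\Big(-\frac{\Vert P\Vert_F^2}{2N(\pi)^2}\Big)\;\leq\;1-\frac{\Vert P\Vert_F^2}{4N(\pi)^2},
\]
the last step using $\Vert P\Vert_F^2/(2N(\pi)^2)\leq\tfrac12<\ln2$ and the elementary inequality $e^{-x}\leq1-x/2$ valid on $[0,\ln2]$. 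Undoing the reduction gives the right-hand inequality of the theorem.

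\emph{Lower bound --- reduction to a torus.} Set $\delta:=1-\inf_{h\in G}\Vert h\cdot v\Vert^2/\Vert v\Vert^2\in[0,1]$; the claim is equivalent to $\Vert P\Vert_F\geq\gamma(\pi)\,\delta$. Because $T_m(\CC)\leq G$, the real diagonal matrices $\Diag_m(\RR)$ form a maximal abelian subspace of $\ii\Lie(K)$, so $\ii\Lie(K)=\bigcup_{u\in K}u\,\Diag_m(\RR)\,u^{-1}$; combining this with the Cartan decomposition $G=K\exp(\ii\Lie(K))$ and unitary invariance of $\Vert\cdot\Vert$,
\[
\inf_{h\in G}\Vert h\cdot v\Vert^2\;=\;\inf_{u\in K}\;\inf_{d\in\RR^m}\;\sum_{\omega}e^{2\langle\omega,d\rangle}\,\big\Vert(\pi(u)^{-1}v)_\omega\big\Vert^2\;=:\;\inf_{u\in K}\mathrm{cap}_T\big(\pi(u)^{-1}v\big),
\]
where $w=\sum_\omega w_\omega$ is the orthogonal weight decomposition and $\mathrm{cap}_T(w):=\inf_{d}\sum_\omega e^{2\langle\omega,d\rangle}\Vert w_\omega\Vert^2$. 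A direct computation with the gradient formula gives $\nabla F_{\pi(u)^{-1}v}(K)=u^{-1}\big(\nabla F_v(K)\big)u$ for $u\in K$, hence $\Vert\nabla F_{\pi(u)^{-1}v}(K)\Vert_F=\Vert P\Vert_F$ for every $u\in K$. Thus it suffices to prove the torus statement $\Vert\nabla F_w(K)\Vert_F\geq\gamma(\pi)\big(1-\mathrm{cap}_T(w)/\Vert w\Vert^2\big)$ for every $w\neq0$: applying it with $w=\pi(u)^{-1}v$ and supremizing over $u\in K$ then yields $\Vert P\Vert_F\geq\gamma(\pi)\delta$.

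\emph{The torus statement and the main obstacle.} The orthogonal projection of $\nabla F_w(K)$ onto $\Diag_m(\RR)$ is $\Diag\big(\sum_\omega p_\omega\omega\big)$, where $p_\omega:=\Vert w_\omega\Vert^2/\Vert w\Vert^2$ is a probability distribution on $S_w:=\{\omega\in\Omega(\pi):w_\omega\neq0\}$; hence $\Vert\nabla F_w(K)\Vert_F\geq\big\Vert\sum_\omega p_\omega\omega\big\Vert_2$ and $\sum_\omega p_\omega\omega\in\conv(S_w)$. If $0\notin\conv(S_w)$, a separating direction $d$ drives $\sum_\omega e^{2t\langle\omega,d\rangle}\Vert w_\omega\Vert^2\to0$, so $\mathrm{cap}_T(w)=0$ and the desired bound is $\Vert\nabla F_w(K)\Vert_F\geq\gamma(\pi)$, which follows from $\big\Vert\sum_\omega p_\omega\omega\big\Vert_2\geq\dist(0,\conv(S_w))\geq\gamma(\pi)$ --- exactly the definition of the weight margin. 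The remaining case $0\in\conv(S_w)$, where $\mathrm{cap}_T(w)/\Vert w\Vert^2\in(0,1]$ can be arbitrary, is the crux and the main technical obstacle: here one must run a quantitative Farkas / exponential-family duality argument relating $\log\big(\mathrm{cap}_T(w)/\Vert w\Vert^2\big)=\inf_d\log\sum_\omega p_\omega e^{2\langle\omega,d\rangle}$ to the distance from $\sum_\omega p_\omega\omega$ to those faces of $\conv(S_w)$ that avoid $0$, and it is precisely this step in which the combinatorial invariant $\gamma(\pi)$ enters; for it we follow \cite{BFGOWW-19}.
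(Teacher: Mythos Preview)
The paper does not prove this theorem at all: its entire ``proof'' is the sentence ``See \cite[Theorem~1.17]{BFGOWW-19}.'' (After \cref{lem:KN-smoothness} the paper does sketch the upper bound by one step of Taylor--smoothness, exactly as you do.) So there is no ``paper's own proof'' to compare against; what you have written is strictly more detailed than what the paper offers.

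Your upper bound is complete and correct: the reduction to $g=I$, the bound $\Vert P\Vert_F\le N(\pi)$, the $2N(\pi)^2$-smoothness, the single gradient step, and the elementary estimate $e^{-x}\le 1-x/2$ on $[0,\ln 2]$ all check out. Your reduction of the lower bound to a torus statement is also correct: the Cartan decomposition identity $\inf_{h\in G}\Vert h\cdot v\Vert^2=\inf_{u\in K}\mathrm{cap}_T(\pi(u)^{-1}v)$ and the conjugation identity $\nabla F_{\pi(u)^{-1}v}(K)=u^{-1}\nabla F_v(K)\,u$ are both right, and your handling of the case $0\notin\conv(S_w)$ is exactly how the weight margin enters. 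The only part you do not actually prove is the case $0\in\conv(S_w)$, for which you write ``for it we follow \cite{BFGOWW-19}'' --- which is precisely what the paper does for the whole theorem. In short: your proposal is a correct partial elaboration, deferring to the same reference at the same essential step; there is no divergence in approach because the paper supplies no approach of its own.
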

\begin{proof}
	See \cite[Theorem~1.17]{BFGOWW-19}.
\end{proof}
Non-commutative duality shows that small gradient norm implies proximity to the optimal value.
The upper bound in \cref{thm:nc-duality} follows from the following bound on the smoothness of the Kempf-Ness function.
\begin{lemma}
\label{lem:KN-smoothness}
	Let $\pi:G\rightarrow\GL(V)$ be a rational representation of a symmetric subgroup $G\leq \GL_m(\CC)$. 
	The Kempf-Ness function $F_v$ associated to $0\neq v\in V$ is $2N(\pi)^2$-smooth.
\end{lemma}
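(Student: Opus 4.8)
The plan is to read the bound straight off the Hessian formula for the Kempf--Ness function and then reduce the estimate to the diagonal torus, where the weights make it explicit. Fix $Kg\in\rquo{G/K}$ and put $w\coloneqq g\cdot v/\Vert g\cdot v\Vert$, a unit vector in $V$. By \cref{prop:grad-Hess-KN}, for every $H\in\ii\Lie(K)$
\[
\tfrac12\langle \nabla^2 F_v(Kg)H,H\rangle = \Vert \Pi(H)w\Vert^2 - \langle \Pi(H)w,w\rangle^2 \;\le\; \Vert \Pi(H)w\Vert^2 \;\le\; \Vert \Pi(H)\Vert_{\mathrm{op}}^2,
\]
where the first inequality discards a nonnegative term and the second uses $\Vert w\Vert=1$. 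So it suffices to establish the operator--norm estimate $\Vert\Pi(H)\Vert_{\mathrm{op}}\le N(\pi)\,\Vert H\Vert_F$ for all $H\in\ii\Lie(K)$: granting this, $\nabla^2 F_v(Kg)\preceq 2N(\pi)^2\cdot I$ at every coset $Kg$, which, together with the geodesic convexity of $F_v$ (\cref{ex:Kempf-Ness}), is exactly $2N(\pi)^2$-smoothness in the sense of \cref{def:L-smooth}.

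I would prove the operator--norm estimate in two steps. First, for a real diagonal matrix $D\in\Diag_m(\RR)$ (which lies in $\ii\Lie(K)$ because $T_m(\CC)\le G$), the weight-space decomposition $V=\bigoplus_i V_{\omega_i}$ is $\Pi(D)$-invariant and $\Pi(D)$ acts on $V_{\omega_i}$ by the scalar $\langle\omega_i,D\rangle$, since $\pi(e^{sD})$ acts there by $e^{s\langle\omega_i,D\rangle}$. Hence
\[
\Vert\Pi(D)\Vert_{\mathrm{op}} = \max_i |\langle\omega_i,D\rangle| \;\le\; \max_i \Vert\omega_i\Vert_2\,\Vert D\Vert_2 \;=\; N(\pi)\,\Vert D\Vert_2 \;\le\; N(\pi)\,\Vert D\Vert_F.
\]
Second, for general $H\in\ii\Lie(K)$, I would invoke the Cartan decomposition $\Lie(G)=\Lie(K)\oplus\ii\Lie(K)$ of the symmetric group $G$ together with the conjugacy theorem for maximal abelian subspaces: because $T_m(\CC)\le G$, the subspace $\Diag_m(\RR)$ is maximal abelian in $\ii\Lie(K)$ (it is already maximal abelian in $\Herm(m)$, as a Hermitian matrix commuting with all diagonal matrices is diagonal), so there is $u\in K$ with $H=uDu^{-1}$ for some $D\in\Diag_m(\RR)$. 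Then $\Pi(H)=\pi(u)\,\Pi(D)\,\pi(u)^{-1}$, and since the Hermitian inner product on $V$ is $K$-invariant, $\pi(u)$ is unitary; therefore $\Vert\Pi(H)\Vert_{\mathrm{op}}=\Vert\Pi(D)\Vert_{\mathrm{op}}$ and $\Vert H\Vert_F=\Vert D\Vert_F$, and the diagonal case closes the argument.

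The only ingredient that is not a direct computation is the conjugacy step: that every Hermitian element of $\Lie(G)$ is $K$-conjugate into the diagonal torus. I expect this to be the point requiring the most care, since it is exactly where the hypothesis that $G$ is a symmetric (Zariski-closed, self-adjoint) subgroup containing $T_m(\CC)$ is used, through the Cartan decomposition of $G$ and the transitivity of $K$ on the maximal abelian subspaces of $\ii\Lie(K)$; the remaining steps are the Cauchy--Schwarz bound on the Hessian and the elementary estimate $|\langle\omega,D\rangle|\le\Vert\omega\Vert_2\Vert D\Vert_F$ on the weights.
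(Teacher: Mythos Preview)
Your argument is correct and complete. The paper itself does not prove this lemma---it simply cites \cite[Proposition~3.13]{BFGOWW-19}---so you have supplied what the paper defers. Your route (bound the Hessian by $\Vert\Pi(H)\Vert_{\mathrm{op}}^2$ via \cref{prop:grad-Hess-KN}, then control $\Vert\Pi(H)\Vert_{\mathrm{op}}$ by reducing to the diagonal torus through the Cartan conjugacy theorem) is exactly the standard proof and is the one given in the cited reference.

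Two small points worth making explicit, though neither is a gap. First, the hypothesis $T_m(\CC)\le G$ is not stated in the lemma but is implicit, since $N(\pi)$ is only defined under that assumption (\cref{def:weight-norm-margin}); you are right to invoke it. Second, when you conclude $\Vert\Pi(D)\Vert_{\mathrm{op}}=\max_i|\langle\omega_i,D\rangle|$, you are using that $\Pi(D)$ is a \emph{Hermitian} operator on $V$ (so its operator norm equals its spectral radius). This holds because the $K$-invariance of the inner product forces $\Pi(\Lie(K))$ to consist of skew-Hermitian operators, and the complex-linearity of $\Pi$ then makes $\Pi(\ii\Lie(K))$ Hermitian. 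Equivalently, the weight-space decomposition is orthogonal since distinct weights give distinct characters of the compact torus $T_m(\CC)\cap K$. Either observation closes the only implicit step in your computation.
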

\begin{proof}
    See \cite[Proposition~3.13]{BFGOWW-19}.
\end{proof}
An application of \cref{lem:Taylor-smoothness} to the Kempf-Ness function $F_v$ with $g=I_m$ and $H=-\frac{\nabla F_v(Kg)}{2N(\pi)^2}$ gives the upper bound in \cref{thm:nc-duality}, see \cite[Theorem~3.23]{BFGOWW-19}.

The lower bound in \cref{thm:nc-duality} seems to be much more interesting.
In general, it is not easy to estimate the weight margin $\gamma(\pi)$ given the weights. 
A general lower bound on the weight margin is given in \cite[Theorem~6.10]{BFGOWW-19}:
\begin{proposition}
\label{prop:margin-general-lb}
	Suppose $G\leq\GL_{m_1}(\CC)\times\dots\times\GL_{m_k}(\CC)$ and $T_{m_1}(\CC)\times\dots\times T_{m_k}(\CC)\leq G$. If $\pi:G\rightarrow\GL(V)$ is a rational representation of $G$,
	then $
	\gamma(\pi)\geq N(\pi)^{1-m}\, m^{-1}
	$ where $m\coloneqq m_1+\dots+m_k$ and $N(\pi)$ is the weight norm of $\pi$. 
\end{proposition}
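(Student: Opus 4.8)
This is \cite[Theorem~6.10]{BFGOWW-19}; I would prove it directly by unwinding the definition of $\gamma(\pi)$ into a purely geometric question about integer points and then settling that question with a determinantal estimate. First I would dispose of the degenerate case: if $N(\pi)=0$ then every weight is $0$, no $S\subseteq\Omega(\pi)$ satisfies $0\notin\conv(S)$, so $\gamma(\pi)=+\infty$ and there is nothing to prove; hence I may assume $N(\pi)\geq 1$, which is automatic once some weight is nonzero since weights lie in $\ZZ^m$. Since $\Omega(\pi)$ is finite, the infimum defining $\gamma(\pi)$ is attained, say by $S^\star$ with $0\notin\conv(S^\star)$; let $p$ be the point of $\conv(S^\star)$ nearest the origin, so $\gamma(\pi)=\|p\|_2>0$. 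I would then replace $S^\star$ by the vertex set of the minimal face of $\conv(S^\star)$ containing $p$: this face still contains $p$ and lies inside $\conv(S^\star)$, so it also realizes the margin, its vertices $\omega_1,\dots,\omega_r$ are affinely independent, and $p\in\mathrm{relint}\,\conv\{\omega_1,\dots,\omega_r\}$, i.e.\ $p=\sum_j\lambda_j\omega_j$ with all $\lambda_j>0$ and $\sum_j\lambda_j=1$.

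Next I would extract the linear algebra. Because $p$ is the nearest point and lies in the relative interior, the projection (variational) inequality forces $\langle\omega_j,p\rangle=\|p\|_2^2$ for every $j$; since $\|p\|_2^2>0$ this places the $\omega_j$ on an affine hyperplane avoiding the origin, and together with affine independence this makes $\{\omega_1,\dots,\omega_r\}$ \emph{linearly} independent, so $r\leq m$ and the integer matrix $W\in\ZZ^{r\times m}$ with rows $\omega_1,\dots,\omega_r$ has full row rank. Writing $p=W^T\lambda$ and substituting into $\langle\omega_j,p\rangle=\|p\|_2^2$ gives $WW^T\lambda=\|p\|_2^2\,\mathbf 1$, so $\lambda=\|p\|_2^2\,(WW^T)^{-1}\mathbf 1$ (as $WW^T$ is invertible); summing coordinates and using $\sum_j\lambda_j=1$ yields the exact identity $\|p\|_2^{2}=\bigl(\mathbf 1^T(WW^T)^{-1}\mathbf 1\bigr)^{-1}$. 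It then remains to bound $\mathbf 1^T(WW^T)^{-1}\mathbf 1$ from above. I would write $(WW^T)^{-1}=\det(WW^T)^{-1}\,\mathrm{adj}(WW^T)$: since $W$ is integral of full row rank, $WW^T$ is a symmetric positive definite integer matrix, so $\det(WW^T)\geq 1$; and each entry of $\mathrm{adj}(WW^T)$ is $\pm$ the determinant of an $(r-1)\times(r-1)$ submatrix of $WW^T$ obtained by deleting one row and one column, which one recognizes as $\det(W_{\setminus i}W_{\setminus j}^T)$, where $W_{\setminus i}\in\ZZ^{(r-1)\times m}$ denotes $W$ with its $i$-th row removed. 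By the Cauchy--Binet formula and Cauchy--Schwarz this is at most $\det(W_{\setminus i}W_{\setminus i}^T)^{1/2}\det(W_{\setminus j}W_{\setminus j}^T)^{1/2}$, and by Hadamard's inequality for Gram matrices each factor is $\leq\prod_{\ell\neq i}\|\omega_\ell\|_2^2\leq N(\pi)^{2(r-1)}$. Summing the $r^2$ entries gives $\mathbf 1^T(WW^T)^{-1}\mathbf 1\leq r^2 N(\pi)^{2(r-1)}\leq m^2 N(\pi)^{2(m-1)}$, using $r\leq m$ and $N(\pi)\geq 1$, so $\gamma(\pi)=\|p\|_2\geq m^{-1}N(\pi)^{1-m}$.

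The only genuinely delicate point is the reduction in the first paragraph: showing that the margin is realized on an affinely independent family of weights with $p$ in the relative interior of their convex hull, and reading off the orthogonality relations $\langle\omega_j,p\rangle=\|p\|_2^2$ from the nearest-point characterization. Once that is in place, everything else — the exact formula for $\|p\|_2^2$, and the adjugate / integrality / Cauchy--Binet / Hadamard chain bounding $\mathbf 1^T(WW^T)^{-1}\mathbf 1$ — is routine bookkeeping, and the crude count of $r^2$ cofactors already yields the stated exponent.
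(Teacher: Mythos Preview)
The paper does not prove this proposition at all; it simply cites \cite[Theorem~6.10]{BFGOWW-19} and moves on. Your proposal therefore supplies something the paper omits: a self-contained argument. The strategy --- reduce to an affinely independent set of weights realizing the margin, extract the exact formula $\|p\|_2^2=(\mathbf 1^T(WW^T)^{-1}\mathbf 1)^{-1}$ from the optimality conditions, and then bound the inverse Gram matrix entrywise via integrality, Cauchy--Binet and Hadamard --- is correct and clean, and indeed follows the line of the cited result.

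One small technical slip in the reduction step: the vertex set of the minimal face of $\conv(S^\star)$ containing $p$ need \emph{not} be affinely independent, since that face is a polytope but not necessarily a simplex (think of $p$ landing in the relative interior of a square face). What you actually want is the standard Carath\'eodory/minimal-support argument: among all representations $p=\sum_j\lambda_j\omega_j$ with $\omega_j\in S^\star$, $\lambda_j>0$, $\sum_j\lambda_j=1$, pick one with the fewest terms; then the $\omega_j$ are automatically affinely independent, and since $\{\omega_1,\dots,\omega_r\}\subset S^\star$ with $0\notin\conv(S^\star)$, this subset still realizes the margin with $p$ its nearest point to the origin. Once you phrase it that way, the orthogonality relations $\langle\omega_j,p\rangle=\|p\|_2^2$ follow exactly as you say (from $\langle\omega_j,p\rangle\geq\|p\|_2^2$ together with $\sum_j\lambda_j\langle\omega_j,p\rangle=\|p\|_2^2$ and $\lambda_j>0$), linear independence of the $\omega_j$ follows, and the rest of your computation goes through verbatim.
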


For general representations, \cref{prop:margin-general-lb} seems to be sharp. 
In special cases, it is possible to give better lower bounds, such as when the weight matrix $M$ is \emph{totally unimodular}. 
Recall that an integer matrix $M$ is called totally unimodular if all of its subdeterminants are $0,\pm 1$.

\begin{proposition}[{\cite[Corollary~6.11]{BFGOWW-19}}]
\label{prop:tu-margin}
	Let $G\leq\GL_m(\CC)$ be a symmetric subgroup that contains $T_m(\CC)$.
	Suppose $\pi:G\rightarrow\GL(V)$ is a rational representation
	 with weight matrix $M\in\ZZ^{m\times N}$.
	If $M$ is totally unimodular, then \[
	\gamma(\pi) \geq m^{-\frac{3}{2}}.
	\]
\end{proposition}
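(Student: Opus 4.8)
The plan is to reduce the estimate on the weight margin (\cref{def:weight-norm-margin}) to a norm-minimization problem governed by a single totally unimodular matrix, and then to solve that problem by an explicit application of Cramer's rule. Fix a subset $S\subseteq\Omega(\pi)$ with $0\notin\conv(S)$ and let $p$ be the point of the polytope $\conv(S)$ nearest the origin; the goal is to show $\Vert p\Vert_2\ge m^{-3/2}$. By Carath\'eodory's theorem I would write $p=\sum_{\omega\in T}\lambda_\omega\,\omega$ as a convex combination with all $\lambda_\omega>0$ over an affinely independent subset $T\subseteq S$. First-order optimality of $p$ along the segments $[p,\omega]$, $\omega\in S$, yields $\langle p,\omega\rangle\ge\Vert p\Vert^2$ for every $\omega\in S$; plugging this into $\Vert p\Vert^2=\sum_{\omega\in T}\lambda_\omega\langle p,\omega\rangle$ and using $\sum_\omega\lambda_\omega=1$ forces $\langle p,\omega\rangle=\Vert p\Vert^2$ for every $\omega\in T$. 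Since $0\notin\conv(S)$ we have $\Vert p\Vert^2>0$, so $\mathrm{aff}(T)$ avoids the origin, and hence $T$ is in fact \emph{linearly} independent. Let $W\in\ZZ^{m\times r}$ with $r=\lvert T\rvert\le m$ be the matrix whose columns are the elements of $T$; since $T$ consists of weights, $W$ (or its transpose, depending on the convention for the weight matrix $M$) is a submatrix of $M$ and is therefore totally unimodular.

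Next I would rephrase $\Vert p\Vert$ analytically. The two relations just obtained say precisely that $W^{T}p=\Vert p\Vert^2\,\mathbf{1}_r$ and that $p$ lies in the column space of $W$. Consequently $q_0\coloneqq p/\Vert p\Vert^2$ is a solution of the system $W^{T}q=\mathbf{1}_r$ lying in $\mathrm{col}(W)=\ker(W^{T})^{\perp}$, so it is the \emph{minimum-norm} solution, and $\Vert q_0\Vert^2=\Vert p\Vert^{-2}$. It therefore suffices to exhibit \emph{any} vector $q\in\RR^m$ with $W^{T}q=\mathbf{1}_r$ and $\Vert q\Vert^2\le m^3$: this gives $\Vert p\Vert^{-2}=\Vert q_0\Vert^2\le\Vert q\Vert^2\le m^3$, i.e. $\Vert p\Vert\ge m^{-3/2}$.

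Such a $q$ is produced by total unimodularity. Because $W$ has full column rank $r$, some $r\times r$ submatrix $W_I$ (rows indexed by a set $I$) is nonsingular, and being a subdeterminant of the totally unimodular $W$ it satisfies $\det W_I=\pm1$. Take $q$ supported on $I$ with $q_I\coloneqq (W_I^{T})^{-1}\mathbf{1}_r$; then $W^{T}q=W_I^{T}q_I=\mathbf{1}_r$. By Cramer's rule each coordinate $(q_I)_k$ equals $\pm\det$ of the matrix obtained from $W_I^{T}$ by replacing its $k$-th column with $\mathbf{1}_r$; expanding this $r\times r$ determinant along the all-ones column writes it as a signed sum of $r$ subdeterminants of $W$ of size $r-1$, each lying in $\{0,\pm1\}$ by total unimodularity, so $\lvert(q_I)_k\rvert\le r$. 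Hence $\Vert q\Vert^2=\Vert q_I\Vert^2\le r\cdot r^2=r^3\le m^3$, completing the bound. Since $S$ was an arbitrary subset with $0\notin\conv(S)$, minimizing over all such $S$ yields $\gamma(\pi)\ge m^{-3/2}$.

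The one place that requires care is the second paragraph. The naive route --- estimating $\mathbf{1}_r^{T}(W^{T}W)^{-1}\mathbf{1}_r$ directly via Cauchy--Binet on $\det(W^{T}W)$ and on the adjugate --- produces bounds exponential in $m$ and is a dead end. The decisive maneuver is to recognize $\Vert p\Vert^{-2}$ as the squared norm of the minimum-norm solution of $W^{T}q=\mathbf{1}_r$ and then to \emph{abandon} minimality in favor of the convenient solution supported on a unimodular set of rows, after which the polynomial bound drops out of a one-line cofactor expansion against an all-ones column. The remaining ingredients --- the Carath\'eodory reduction, the optimality identity $\langle p,\omega\rangle=\Vert p\Vert^2$ on $T$, and the passage from affine to linear independence --- are routine.
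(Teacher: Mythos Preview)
The paper does not actually prove this proposition; it is stated with a citation to \cite[Corollary~6.11]{BFGOWW-19} and used as a black box. Your argument is correct and essentially reproduces the proof given in that reference: the Carath\'eodory reduction to an affinely independent active set $T$, the optimality identity $\langle p,\omega\rangle=\Vert p\Vert^{2}$ on $T$, the upgrade from affine to linear independence (which is what guarantees $r\le m$ rather than $r\le m+1$), and finally the bound on the minimum-norm solution of $W^{T}q=\mathbf{1}_r$ via a convenient solution supported on a unimodular $r\times r$ block and Cramer's rule against the all-ones column. There is nothing to correct.
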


\subsection{Strongly convex optimization}
We now consider the class of strongly convex functions on manifolds. 
This subject seems to be overlooked in \cite{BFGOWW-19}: 
This is due to fact that the Kempf-Ness function is in general not strongly convex.

\begin{definition}
\label{def:mu-strongly-convex}
A smooth geodesically convex function $f \colon \rquo{G/K} \rightarrow \RR$ is called $\mu$-strongly convex ($\mu > 0$) on a sublevel set $S\coloneqq\{Kg\in \rquo{G/K}\mid f(Kg)\leq f(Kg_0)\}$ if
\[
\langle H, \nabla^2 f(Kg) H \rangle \geq \mu \Vert H \Vert_F^2
\]
for all $Kg \in S$ and $H \in \ii\Lie(K)$. 
Equivalently, the smallest eigenvalue of $\nabla^2 f(Kg)$ is at least $\mu$.
\end{definition}
This condition implies that the function grows at least quadratically in all directions, which is crucial for establishing fast convergence rates.
The gradient descent algorithm for strongly convex functions exhibit linear convergence rates, giving rise to $O(\log\tfrac{1}{\varepsilon})$-time algorithms.

\begin{remark}
    Strongly convex functions enjoy a similar inequality as \cref{lem:Taylor-smoothness}:
    $f(K e^H g) \geq f(Kg) + \langle \nabla f(Kg), H\rangle + \frac{\mu}{2}\Vert H\Vert_F^2$.
    When specialized to $e^{H}g=\arg\min_{Kg} f(Kg)$, we obtain the Polyak-\L ojasiewicz inequality: \[
    f(Kg)-f_\star \leq \frac{1}{2\mu}\,\Vert \nabla f(Kg)\Vert^2,
    \] see \cite[Lemma~11.28]{Boumal-23} for the details. 
    We note in particular that the Polyak-\L ojasiewicz inequality quantifies proximity to the optimal value in terms of the gradient bound and serves a similar role as the lower bound in non-commutative duality \cref{thm:nc-duality}.
\end{remark}
The convergence of the smooth gradient descent for strongly convex functions on Euclidean spaces is well-known \cite{Boyd-Vandenberghe-11}.
The iteration complexity is generally scales with $\frac{L}{\mu}$, the condition number of the Hessian.
The following is a geodesically convex generalization, see \cite[Theorem~11.29]{Boumal-23}.
\begin{theorem}
\label{thm:strongly-convex-grad-descent}
    If $f \colon \rquo{G/K} \rightarrow \RR$ is $L$-smooth and $\mu$-strongly convex on $S \coloneqq \{Kg \mid f(Kg) \leq f(Kg_0)\}$, then the iteration \eqref{eq:smooth-grad-desc} with initial point $g_0$ satisfies
\begin{equation}
\label{eq:strongly-convex-linear-conv}
f(Kg_T) - f_\star \leq \left(1 - \frac{\mu}{L}\right)^T (f(Kg_0) - f_\star)
\end{equation}
\end{theorem}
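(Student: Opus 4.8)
The plan is to transcribe the classical Euclidean proof of linear convergence of gradient descent for $L$-smooth $\mu$-strongly convex functions into the geodesic setting, keeping track of one subtlety: strong convexity is only assumed on the sublevel set $S=\{Kg\mid f(Kg)\leq f(Kg_0)\}$, not globally. The two ingredients are a per-step descent guarantee coming from $L$-smoothness (\cref{lem:Taylor-smoothness}) and a Polyak--\L ojasiewicz inequality $\Vert\nabla f(Kg)\Vert_F^2\geq 2\mu\bigl(f(Kg)-f_\star\bigr)$ coming from $\mu$-strong convexity; together they contract the optimality gap by the factor $1-\mu/L$ at each step.

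First I would verify that the iterates never leave $S$. Applying \cref{lem:Taylor-smoothness} to the update $Kg_{i+1}=K\exp\bigl(-\tfrac1L\nabla f(Kg_i)\bigr)g_i$ gives $f(Kg_{i+1})\leq f(Kg_i)-\tfrac{1}{2L}\Vert\nabla f(Kg_i)\Vert_F^2\leq f(Kg_i)$, so by induction $Kg_i\in S$ for every $i$, and the Hessian lower bound of \cref{def:mu-strongly-convex} is therefore available at every iterate. Next I would record the Polyak--\L ojasiewicz inequality, as in the remark after \cref{def:mu-strongly-convex}: restricting $f$ to the geodesic $t\mapsto Ke^{tH}g_i$ joining $Kg_i$ to a minimizer $Kg_\star$ (a segment that stays in $S$, since sublevel sets of geodesically convex functions are geodesically convex) and integrating $\tfrac{d^2}{dt^2}f\geq\mu\Vert H\Vert_F^2$ twice yields $f_\star\geq f(Kg_i)+\langle\nabla f(Kg_i),H\rangle+\tfrac{\mu}{2}\Vert H\Vert_F^2\geq f(Kg_i)-\tfrac{1}{2\mu}\Vert\nabla f(Kg_i)\Vert_F^2$, where the last step minimizes the quadratic over $H\in\ii\Lie(K)$. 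Substituting $\Vert\nabla f(Kg_i)\Vert_F^2\geq 2\mu(f(Kg_i)-f_\star)$ into the descent estimate gives $f(Kg_{i+1})-f_\star\leq(1-\mu/L)(f(Kg_i)-f_\star)$, and iterating from $i=0$ to $T-1$ produces \eqref{eq:strongly-convex-linear-conv}.

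The step I expect to need the most care is supplying the minimizer $Kg_\star$ that the Polyak--\L ojasiewicz argument requires, and ensuring it can be joined to each $Kg_i$ by a geodesic lying entirely inside $S$. For existence: integrating the strong-convexity bound along geodesics shows $f$ is quadratically coercive on the closed set $S$; since $\rquo{G/K}$ is a Hadamard manifold, Hopf--Rinow makes the sublevel sets of $f$ inside $S$ compact, so $f_\star$ is attained, and $f_\star=\inf_{\rquo{G/K}}f$ because $S$ is itself a sublevel set. For the geodesic: in a Hadamard manifold the geodesic between two points is unique, and it stays in the geodesically convex set $S$ whenever both endpoints do. Everything else is the one-dimensional Taylor/convexity bookkeeping already used in the proof of \cref{prop:grad-eps}; alternatively one may invoke \cite[Theorem~11.29]{Boumal-23} directly.
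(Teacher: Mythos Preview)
Your proof is correct and is the standard argument. The paper does not actually supply a proof of this theorem: it simply writes ``See \cite[Theorem~11.29]{Boumal-23}'' and moves on, so there is nothing to compare against beyond the citation you already mention at the end of your proposal. Your write-up is precisely the argument one finds behind that reference---descent lemma plus Polyak--\L ojasiewicz, with the extra bookkeeping that the iterates stay in the sublevel set $S$ so that the strong-convexity hypothesis remains available, and the Hopf--Rinow/coercivity check that a minimizer exists in $S$ so that the PL inequality can be derived by shooting a geodesic to it.
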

\begin{corollary}
\label{cor:strongly-convex-descent}
	With the assumptions of \cref{thm:strongly-convex-grad-descent} and $\kappa\coloneqq \frac{L}{\mu}$, we have $f(Kg_T)-f_{\star}<\varepsilon$ for \[
	T = \,\kappa\,\log\left(\frac{f(Kg_0)-f_\star}{\varepsilon}\right).
	\] 
\end{corollary}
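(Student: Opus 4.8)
The plan is to read off the bound directly from the linear convergence estimate in \cref{thm:strongly-convex-grad-descent}. First I would invoke that theorem with the same $L$, $\mu$, $g_0$ and sublevel set $S$ to obtain
\[
f(Kg_T) - f_\star \;\leq\; \left(1 - \frac{\mu}{L}\right)^T \bigl(f(Kg_0) - f_\star\bigr).
\]
Writing $\kappa = L/\mu$, the contraction factor is $\bigl(1 - \tfrac{1}{\kappa}\bigr)^T$. (Note $\mu \leq L$, so $1 - 1/\kappa \in [0,1)$ and the estimate is meaningful; if $f(Kg_0) = f_\star$ there is nothing to prove, so assume $f(Kg_0) - f_\star > 0$.)

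Next I would apply the elementary inequality $1 - x \leq e^{-x}$, valid for all $x \in \RR$, with $x = 1/\kappa$, to get $\bigl(1 - \tfrac{1}{\kappa}\bigr)^T \leq e^{-T/\kappa}$, and hence
\[
f(Kg_T) - f_\star \;\leq\; e^{-T/\kappa}\bigl(f(Kg_0) - f_\star\bigr).
\]
Finally, I would solve for the number of iterations that drives the right-hand side below $\varepsilon$: since $f(Kg_0) - f_\star > 0$, the condition $e^{-T/\kappa}\bigl(f(Kg_0) - f_\star\bigr) < \varepsilon$ is equivalent, after taking logarithms, to $T > \kappa \log\!\bigl(\tfrac{f(Kg_0) - f_\star}{\varepsilon}\bigr)$. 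Thus choosing $T = \kappa \log\!\bigl(\tfrac{f(Kg_0) - f_\star}{\varepsilon}\bigr)$ (or, to be fully rigorous, its ceiling, which is absorbed into the stated bound) yields $f(Kg_T) - f_\star < \varepsilon$, as claimed.

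\textbf{Main obstacle.} There is essentially no real difficulty here: the corollary is a routine restatement of \cref{thm:strongly-convex-grad-descent}. The only points that require a moment's attention are (i) handling the degenerate case $f(Kg_0) = f_\star$, where $g_0$ is already optimal, and (ii) keeping the argument of the logarithm positive so that the final inequality manipulation is valid; both are trivially dispatched.
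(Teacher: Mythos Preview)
Your proposal is correct and follows essentially the same approach as the paper: both start from the linear convergence bound of \cref{thm:strongly-convex-grad-descent} and use the inequality $1 - \tfrac{1}{\kappa} \le e^{-1/\kappa}$ (equivalently $\log(1-\kappa^{-1}) < -\kappa^{-1}$) to convert the contraction factor into an exponential, then solve for $T$. The paper takes logarithms first and then applies the inequality, whereas you apply the exponential bound first, but this is purely cosmetic.
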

\begin{proof}
	Taking the logarithm of \eqref{eq:strongly-convex-linear-conv} and using $\log(1-\kappa^{-1})<-\kappa^{-1}$ (which holds since $\kappa\geq 1$), \[
	\log\left( f(Kg_T)-f_\star \right) < -\frac{T}{\kappa} + \log\left( f(Kg_0)-f_\star\right) \leq \log(\varepsilon).
	\] Hence, $f(Kg_T)-f_\star <\varepsilon$, establishing the claim.
\end{proof}

\section{Preconditioning Linear Systems}
\label{sec:linear-systems}
In this section we consider preconditioning matrices.
We will prove \crefpart{thm:main-g-convexity}{thm:main-part-1}, and \cref{thm:general-linear-complexity,thm:main-linear-algorithm}.

\subsection{Convexity of log-condition} 
\label{sec:setup}
We establish the geodesic convexity of the log-condition number function $C_A(K(X,Y))=\log\kappa(XAY^{-1})$ (defined in \cref{eq:C_A-def}) for arbitrary unitarily invariant norms as a direct consequence of the Heinz-Kato inequality \cite{Heinz-51, Kato-61}. 
The following version of the Heinz-Kato inequality, stated for an arbitrary unitarily invariant matrix norm, is due to Kittaneh \cite[Theorem~1]{Kittaneh-93} and Bhatia and Davis \cite{Bhatia-Davis-95}.
\begin{lemma}[The Heinz-Kato inequality]
\label{lem:heinz-kato}
    Suppose $A\in\CC^{m\times n}$ and $\Vert\cdot\Vert$ is a unitarily invariant matrix norm on $\CC^{m\times n}$. Then, for any pair of positive definite matrices $X\in\PD(m)$ and $Y\in\PD(n)$, \[
    \Vert \sqrt{X} A \sqrt{Y}\Vert^2 \leq \Vert A\Vert\,\Vert XAY\Vert.
    \]
\end{lemma}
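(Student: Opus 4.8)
The plan is to prove the Heinz--Kato inequality by complex interpolation (the Hadamard three-lines theorem), applied after the reduction to positive-definite $X,Y$ that is already built into the statement. Since $X\in\PD(m)$ and $Y\in\PD(n)$ are positive definite, their complex powers $X^{z}\coloneqq e^{z\log X}$ and $Y^{z}\coloneqq e^{z\log Y}$ are well defined via the functional calculus and depend holomorphically on $z\in\CC$. I would introduce the matrix-valued function
\[
F(z)\coloneqq X^{z}\,A\,Y^{z},\qquad z\in S\coloneqq\{z\in\CC:0\le\mathrm{Re}(z)\le 1\},
\]
which is holomorphic on the interior of the strip $S$, and continuous and bounded on $S$; boundedness follows from $\Vert X^{\sigma+it}\Vert=\Vert X^{\sigma}\Vert\le\max(1,\Vert X\Vert)$ for $\sigma\in[0,1]$, and likewise for $Y$, using submultiplicativity of the operator norm against the unitarily invariant norm of $A$.

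The two boundary estimates are immediate from unitary invariance of the norm. On the line $\mathrm{Re}(z)=0$, the matrices $X^{it}$ and $Y^{it}$ are unitary, so $\Vert F(it)\Vert=\Vert X^{it}AY^{it}\Vert=\Vert A\Vert$. On the line $\mathrm{Re}(z)=1$, write $F(1+it)=X^{it}\,(XAY)\,Y^{it}$, hence $\Vert F(1+it)\Vert=\Vert XAY\Vert$. It then remains only to interpolate at the midpoint $z=\tfrac12$, where $F(\tfrac12)=X^{1/2}AY^{1/2}=\sqrt{X}\,A\,\sqrt{Y}$, and to square the resulting bound.

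To run the three-lines argument for a \emph{general} unitarily invariant norm $\Vert\cdot\Vert$, I would linearize the norm through the dual norm $\Vert\cdot\Vert'$, via $\Vert M\Vert=\sup\{|\tr(MN)|:\Vert N\Vert'\le 1\}$. For each fixed $N$ with $\Vert N\Vert'\le1$, the scalar function $g_{N}(z)\coloneqq\tr(F(z)N)$ is holomorphic and bounded on $S$, and $|\tr(MN)|\le\Vert M\Vert\,\Vert N\Vert'\le\Vert M\Vert$ together with the two boundary estimates gives $\sup_{t}|g_{N}(it)|\le\Vert A\Vert$ and $\sup_{t}|g_{N}(1+it)|\le\Vert XAY\Vert$. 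The classical Hadamard three-lines theorem applied to $g_N$ yields $|g_{N}(\tfrac12)|\le\Vert A\Vert^{1/2}\,\Vert XAY\Vert^{1/2}$, and taking the supremum over all admissible $N$ gives $\Vert\sqrt{X}A\sqrt{Y}\Vert=\Vert F(\tfrac12)\Vert\le\Vert A\Vert^{1/2}\,\Vert XAY\Vert^{1/2}$; squaring completes the argument.

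The only delicate point I anticipate is precisely this linearization step: a unitarily invariant norm is not itself the modulus of a holomorphic scalar function, so one genuinely needs the duality pairing (equivalently, the subharmonicity of $z\mapsto\log\Vert F(z)\Vert$) in order to apply the scalar three-lines theorem. Everything else---holomorphy and boundedness of $F$ on the strip, the two boundary norm identities, and the final squaring---is routine. One may alternatively invoke the versions due to Kittaneh \cite{Kittaneh-93} or Bhatia--Davis \cite{Bhatia-Davis-95} directly, but the interpolation proof above is self-contained modulo standard complex analysis.
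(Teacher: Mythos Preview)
Your proof is correct. The paper does not actually prove this lemma; it merely states it and attributes the unitarily-invariant-norm version to Kittaneh and Bhatia--Davis, so there is no ``paper's own proof'' to compare against. What you have written is the standard complex-interpolation argument that underlies those references: the holomorphic family $F(z)=X^{z}AY^{z}$ on the strip, the unitary boundary values at $\mathrm{Re}(z)\in\{0,1\}$, and the dual-norm linearization to reduce to the scalar three-lines theorem are all handled correctly. The only minor remark is that the bound $\Vert BAC\Vert\le\Vert B\Vert_{\mathrm{op}}\,\Vert A\Vert\,\Vert C\Vert_{\mathrm{op}}$ you implicitly use for boundedness on the strip is a standard property of unitarily invariant norms (it follows from the symmetric-gauge characterization), so nothing is missing there either.
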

We recall that every positive definite matrix $X$ admits a unique positive definite square root $\sqrt{X}$, i.e., a positive definite matrix that satisfies $(\sqrt{X})^2 = X$.
For a Hermitian matrix $H$, one easily confirms $\sqrt{e^{tH}}=e^{\frac{t}{2}H}$.
Hence, the Heinz-Kato inequality implies that the function $f(t)\coloneqq \Vert e^{tH_1} A e^{tH_2}\Vert$ satisfies $f^2(t/2) \leq f(0) f(t)$ for positive $t\in\RR_{>0}$. 
This will be enough to prove the log-convexity of $f(t)$.
\begin{proposition}
\label{prop:convexity-of-log-norm}
   Suppose $B\in\CC^{m\times n}$ and $\Vert\cdot\Vert$ is a unitarily invariant matrix norm on $\CC^{m\times n}$.
    Then, for any pair of Hermitian matrices $H_1\in\Herm(m)$ and $H_2\in\Herm(n)$, the following function is convex: \begin{equation}
    	\label{eq:log-norm-h}
    h(t)= \log\Vert e^{tH_1} B e^{tH_2}\Vert.
    \end{equation}
\end{proposition}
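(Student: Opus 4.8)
The plan is to prove convexity of $h(t) = \log\Vert e^{tH_1} B e^{tH_2}\Vert$ by reducing it to a midpoint-convexity statement and then bootstrapping to full convexity using continuity. First I would record the consequence of the Heinz–Kato inequality (\cref{lem:heinz-kato}) that will do all the work: for Hermitian $H_1, H_2$ and any real $t$, setting $X = e^{tH_1} \in \PD(m)$ and $Y = e^{tH_2} \in \PD(n)$, and using $\sqrt{e^{tH_i}} = e^{(t/2)H_i}$, one gets
\[
\Vert e^{(t/2)H_1} B\, e^{(t/2)H_2}\Vert^2 \;\leq\; \Vert B\Vert \,\cdot\, \Vert e^{tH_1} B\, e^{tH_2}\Vert .
\]
More generally — and this is the form I actually want — I would apply Heinz–Kato not to $B$ itself but to the matrix $B_s := e^{sH_1} B\, e^{sH_2}$ for an arbitrary base point $s \in \RR$. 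Since $B_s$ is again an arbitrary matrix and $e^{tH_1} B_s e^{tH_2} = e^{(s+t)H_1} B\, e^{(s+t)H_2}$, the inequality above becomes, after taking logarithms,
\[
2\, h\!\left(\tfrac{s + (s+t)}{2}\right) \;\leq\; h(s) + h(s+t)
\quad\text{for all } s \in \RR,\ t \geq 0,
\]
i.e. $h$ satisfies the midpoint inequality $h\big(\tfrac{a+b}{2}\big) \leq \tfrac12(h(a) + h(b))$ for every pair $a \leq b$ (take $a = s$, $b = s+t$; the case $a = b$ is trivial and the ordering is irrelevant by symmetry). So the first key step is: Heinz–Kato applied at a sliding base point yields exactly midpoint convexity of $h$.

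The second step is to upgrade midpoint convexity to genuine convexity. This is the standard Sierpiński-type argument: a midpoint-convex function on an interval that is in addition measurable (a fortiori, continuous) is convex. Here $h$ is manifestly continuous — $t \mapsto e^{tH_1} B\, e^{tH_2}$ is a real-analytic matrix-valued map, the norm $\Vert\cdot\Vert$ is continuous, and $e^{tH_1} B\, e^{tH_2}$ is never the zero matrix (it is $B$ conjugated by invertible matrices, and we may assume $B \neq 0$, since for $B = 0$ there is nothing to prove — or one simply notes the statement is about $\Vert\cdot\Vert$ which is then identically $0$ and $\log 0$ does not arise in the intended application). Hence $\log\Vert\cdot\Vert$ is well-defined and continuous in $t$. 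A continuous midpoint-convex function is convex, which completes the proof.

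I expect essentially no serious obstacle here: the content is entirely front-loaded into \cref{lem:heinz-kato}, and the two remaining moves — sliding the base point to turn the quadratic-mean inequality into midpoint convexity, and invoking continuity to pass from midpoint to full convexity — are routine. The only point requiring a word of care is the identity $\sqrt{e^{tH}} = e^{(t/2)H}$ for Hermitian $H$, which holds because $e^{(t/2)H}$ is positive definite (Hermitian with positive eigenvalues) and squares to $e^{tH}$, so by uniqueness of the positive definite square root it is $\sqrt{e^{tH}}$; and the observation that $e^{sH_1} B\, e^{sH_2}$ ranges over a set of matrices to which Heinz–Kato still applies verbatim, so no new inequality is needed. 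If one preferred to avoid the measurability/continuity lemma entirely, an alternative is to note that convexity of $h$ is equivalent to convexity of $t \mapsto h(at + b)$ for all $a, b$, and that midpoint convexity plus continuity at a single point already forces continuity everywhere for midpoint-convex functions — but the cleanest exposition is just: Heinz–Kato $\Rightarrow$ midpoint convex; continuous; hence convex.
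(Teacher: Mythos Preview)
Your proposal is correct and follows essentially the same approach as the paper: apply the Heinz--Kato inequality (\cref{lem:heinz-kato}) at a shifted base point, using $\sqrt{e^{tH}} = e^{(t/2)H}$, to obtain midpoint convexity of $h$, and then invoke continuity to upgrade to full convexity. The paper's proof differs only cosmetically, writing the midpoint directly as $e^{\frac{t-s}{2}H_1}(e^{sH_1}Be^{sH_2})e^{\frac{t-s}{2}H_2}$ rather than introducing the notation $B_s$.
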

\begin{proof}
    Since $\log\Vert e^{tH_1} B e^{tH_2}\Vert$ is continuous, is suffices to prove the mid-point convexity, i.e., \begin{equation}
    \label{eq:log-convexity-prop}
    \log\Vert e^{\frac{s+t}{2}\, H_1} B e^{\frac{s+t}{2}\, H_2}\Vert \leq \frac{1}{2}\Big(\, \log\Vert e^{sH_1} Ae^{sH_2}\Vert \, + \,\log \Vert e^{tH_1} B e^{tH_2} \Vert\, \Big)
    \end{equation} for arbitrary $s<t$. 
    We rearrange \begin{equation}
    \label{eq:rewrite}
     e^{\frac{s+t}{2}\, H_1} B e^{\frac{s+t}{2}\, H_2} =  e^{\frac{t-s}{2}H_1}\,\big(\, e^{sH_1} B e^{sH_2}\,\big)\, e^{\frac{t-s}{2}H_2}.
    \end{equation}
    Noting that $e^{\frac{t-s}{2}H_1}=\sqrt{e^{(t-s)H_1}}$ (and similarly for $H_2$), 
    we apply the Heinz-Kato inequality to the right-hand side of \cref{eq:rewrite}, yielding \[
    \Vert  e^{\frac{s+t}{2}\, H_1} B e^{\frac{s+t}{2}\, H_2}\Vert^2 \leq \Vert e^{sH_1} B e^{sH_2}\Vert \, \Vert e^{(t-s)H_1} \, e^{sH_1} B e^{ sH_2}  e^{(t-s)H_2}\Vert =  \Vert e^{sH_1}B e^{sH_2} \Vert \, \Vert e^{tH_1} B e^{tH_2}\Vert.
    \] This inequality is equivalent to \cref{eq:log-convexity-prop}, completing the proof.
\end{proof}
We now prove \crefpart{thm:main-g-convexity}{thm:main-part-1}.
\begin{proof}[Proof of \crefpart{thm:main-g-convexity}{thm:main-part-1}]
    We set $B\coloneqq XAY^{-1}$ and $B^\dag= YA^{\dag}X^{-1}$.
    Then we express $C_A(e^{tH_1}X, e^{tH_2}Y)$ as \[ C_A(e^{tH_1}X, e^{tH_2}Y) = \log\Vert e^{tH_1} B e^{-tH_2}\Vert + \log\Vert e^{tH_2} B^\dag e^{-tH_1}\Vert.\] 
    By \cref{prop:convexity-of-log-norm}, both terms are convex in $t$.
    Thus, $C_A(e^{tH_1}X, e^{tH_2}Y)$ is convex in $t$, and by \cref{def:geodesic-convexity} of geodesic convexity, $C_A$ is geodesically convex on $\rquo{G/K}$.
\end{proof}

\begin{remark}
    The proof of \cref{thm:main-g-convexity} shows even more generally that the function \[
    C_{A,B}(K(X,Y)) \coloneqq \log\kappa(XAY^{-1}, YBX^{-1})
    \] is geodesically convex, where $\kappa(A,B)\coloneqq \Vert A\Vert_1 \Vert B\Vert_2$ is the \emph{cross condition number} induced by $\Vert\cdot\Vert_1$ and $\Vert\cdot\Vert_2$.
    The convexity of $\log\kappa(XAY^{-1})$ is then a special case as $\kappa(A,A^{\dag})=\kappa(A)$.
\end{remark}

An important case that we will deal in the next chapters is when both $\CC^{m\times n}$ and $\CC^{n\times m}$ are endowed with the Frobenius norm.
In this case, the geodesic convexity of $C_A\left(K(X,Y)\right)$ follows from \cref{ex:Kempf-Ness}:
We may write \[
\kappa_F(A)\coloneqq\Vert A\Vert_F \Vert A^\dag\Vert_F = \Vert A\otimes A^\dag\Vert
\] where the latter norm on $\CC^{m\times n}\otimes\CC^{n\times m}$ is induced from the inner product on tensor product, defined by $\langle A\otimes B,C\otimes D\rangle=\tr(A^* C)\tr(B^* D)$ and linearly extending to the whole tensor product. 
Hence, optimization of $\kappa_F(A)$ is equivalent to the optimization of the Kempf-Ness function associated to $A\otimes A^{\dag}$, where the action of $\GL_m(\CC)\times\GL_n(\CC)$ on $\CC^{m\times n}\otimes\CC^{n\times m}$ is given by \[
(X,Y)\cdot( A\otimes B )= XAY^{-1} \otimes YBX^{-1}.
\]
Consequently, the cross condition number $\log\kappa_F(XAY^{-1},YBX^{-1})$ becomes the Kempf-Ness function associated to the vector $A\otimes B$, and when specialized to $B=A^\dag$, $\log\kappa_F(XAY^{-1})$ becomes the Kempf-Ness function associated to $A\otimes A^\dag$.
Hence, it is geodesically convex on $\rquo{G/K}$ by \cref{ex:Kempf-Ness}.

\subsection{The gradient of the log-condition}
In this section, we will analyse the parameters governing the complexity of the gradient descent for the log-condition number. 
Specifically, we seek to bound the smoothness parameter of $C_A$ (\cref{def:L-smooth}).
This is only possible if $C_A$ is smooth, i.e., when the matrix norm is a smooth function.
For instance, the operator norm $\Vert \diag(1+t, 1-t)\Vert_{op} = |t|$ for $t \in (-1,1)$ fails to be differentiable at $t=0$. 
To circumvent this, we instead work with the Frobenius norm $\Vert A\Vert_F^2 = \tr(A^* A)$, which is smooth and induces the Frobenius condition number/cross condition number:
\[
\kappa_F(A) \coloneqq \Vert A\Vert_F \Vert A^\dag\Vert_F,\quad \kappa_F(A,B)\coloneqq \Vert A\Vert_F \, \Vert B\Vert_F.
\]
We will study the log-condition/cross-condition function 
\[
C_A\big(K(X,Y)\big) \coloneqq \log \kappa_F(XAY^{-1}),\quad C_{A,B}\big(K(X,Y)\big)\coloneqq \log\kappa_F(XAY^{-1}, YBX^{-1}),
\]
defined on the symmetric space $\rquo{G/K}$, where $G \leq \GL_m(\CC) \times \GL_n(\CC)$ and $K=G\cap \rmU_m\times \rmU_n$ is its maximal compact subgroup.
Clearly, $C_A = C_{A,A^{\dag}}$.

We note that $C_{A,B}(K(X,Y))$ is the Kempf-Ness function associated to $A\otimes B$ (\cref{ex:Kempf-Ness}), where the action of $\GL_m(\CC)\times\GL_n(\CC)$ on $\CC^{m\times n}\otimes \CC^{n\times m}$ is given by \[
\pi(X,Y) ( A\otimes B ) = XAY^{-1} \otimes YBX^{-1}.
\] That is, \[
C_{A,B}(K(X,Y)) = \log\Vert \pi(X,Y) (A\otimes B)\Vert, 
\] where the norm on $\CC^{m\times n}\otimes\CC^{n\times m}$ is given by $\Vert A\otimes B\Vert=\Vert A\Vert_F\Vert B\Vert_F$. 

\begin{lemma}
\label{lem:preco-Lie-action}
	For $H=(H_1,H_2)\in\ii\Lie(K)$, the Lie algebra action of $H$ on $A\otimes B$ is given by \[
	\Pi(H) \left(A\otimes B\right) = \left(H_1 A - AH_2\right)\otimes B + A \otimes \left( H_2 B - BH_1 \right).
	\]
\end{lemma}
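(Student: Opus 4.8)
The statement is a direct computation: I need to differentiate the representation $\pi(X,Y)(A\otimes B) = XAY^{-1}\otimes YBX^{-1}$ at the identity in the direction $(H_1, H_2)$, using the definition \eqref{eq:Lie-alg-action}, namely $\Pi(H)v = \frac{\diff}{\diff t}\big|_{t=0}\pi(e^{tH})v$. The plan is to substitute $X = e^{tH_1}$, $Y = e^{tH_2}$ into the formula for $\pi$, so that
\[
\pi(e^{tH_1}, e^{tH_2})(A\otimes B) = \big(e^{tH_1} A e^{-tH_2}\big)\otimes\big(e^{tH_2} B e^{-tH_1}\big),
\]
and then apply the product rule for the tensor product together with $\frac{\diff}{\diff t}\big|_{t=0} e^{tH_i} = H_i$ and $\frac{\diff}{\diff t}\big|_{t=0} e^{-tH_i} = -H_i$.

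\textbf{Key steps.} First, differentiate each tensor factor separately. For the first factor, $\frac{\diff}{\diff t}\big|_{t=0}\big(e^{tH_1} A e^{-tH_2}\big) = H_1 A - A H_2$ by the Leibniz rule, evaluating $e^{tH_i}$ at $t=0$ as the identity. Similarly, the second factor gives $\frac{\diff}{\diff t}\big|_{t=0}\big(e^{tH_2} B e^{-tH_1}\big) = H_2 B - B H_1$. Second, since the tensor product is bilinear, the derivative of the product $u(t)\otimes w(t)$ at $t=0$ is $u'(0)\otimes w(0) + u(0)\otimes w'(0)$, and here $u(0) = A$, $w(0) = B$. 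Combining these yields
\[
\Pi(H)(A\otimes B) = (H_1 A - A H_2)\otimes B + A\otimes(H_2 B - B H_1),
\]
which is exactly the claimed formula.

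\textbf{Main obstacle.} There is essentially no obstacle here; the only point requiring a small amount of care is justifying the product rule for the bilinear map $\otimes$, which is standard (bilinear maps are smooth and their derivative obeys the Leibniz rule). One should also note that although $\Pi$ is defined on all of $\Lie(G)$, restricting to $H = (H_1, H_2)\in\ii\Lie(K)$ (Hermitian pairs) is harmless since the formula is $\RR$-linear in $H$ and the computation never uses Hermiticity. Thus the lemma follows immediately from the definitions.
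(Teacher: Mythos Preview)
Your proof is correct and follows essentially the same approach as the paper: both substitute $(X,Y)=(e^{tH_1},e^{tH_2})$ into the definition of $\pi$, then differentiate at $t=0$ using $\frac{\diff}{\diff t}e^{tH}=He^{tH}$ together with the product rule for the bilinear tensor product.
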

\begin{proof}
	We have \[
	\Pi(H) A\otimes B = \left. \frac{\mathrm{d}}{\mathrm{d}t} \right|_{t=0} \pi(e^{tH_1},e^{tH_2})\, A\otimes B = \left. \frac{\mathrm{d}}{\mathrm{d}t} \right|_{t=0} e^{tH_1} A e^{-tH_2}\otimes e^{tH_2}B e^{-tH_1}.
	\]
	Using $\frac{\mathrm{d}}{\mathrm{d}t}e^{tH}= He^{tH}$ and the product rule, we get \[
	\left. \frac{\mathrm{d}}{\mathrm{d}t} \right|_{t=0} e^{tH_1} A e^{-tH_2}\otimes e^{tH_2}B e^{-tH_1} = \left(H_1 A - AH_2\right)\otimes B + A \otimes \left( H_2 B - BH_1 \right).
	\]
\end{proof}

\begin{proof}[Proof of \cref{lem:gradient}]
	\cref{prop:grad-Hess-KN} gives \[
	\langle \nabla C_A(K(X,Y)) , H\rangle = \langle \Pi(H) E , E\rangle
	\] where $E =(X,Y)\cdot\frac{ A\otimes A^{\dag}}{\Vert A\otimes A^\dag\Vert}$. 
	Denote $B\coloneqq XAY^{-1}$.
	By \cref{lem:preco-Lie-action}, we have \[
	\begin{split}
	\langle \Pi(H) E , E\rangle &= \frac{\langle (H_1 B - BH_2) \otimes B^\dag + B \otimes \left( H_2 B^\dag - B^\dag H_1 \right) , B\otimes B^{\dag} \rangle }{\Vert B\otimes B^{\dag}\Vert^2}\\
	&= \frac{ \tr\left(\, (H_1 B-BH_2)^\ast B \, \right)\times \Vert B^\dag\Vert_F^2 + \tr\left(\, (H_2 B^\dag-B^\dag H_1)^\ast B^\dag \, \right)\times \Vert B\Vert_F^2 }{ \Vert B\Vert_F^2 \Vert B^\dag\Vert_F^2 }\\
	&=\frac{\tr( H_1 B B^\ast)}{\Vert B\Vert_F^2} - \frac{\tr(H_2 B^\ast B)}{\Vert B\Vert_F^2} - \frac{\tr(H_1 (B^{\dag})^\ast B^\dag)}{\Vert B^\dag\Vert_F^2} + \frac{\tr(H_2 B^\dag (B^\dag)^\ast)}{\Vert B^\dag\Vert_F^2}\\
	&= \tr(H_1 P ) + \tr(H_2 Q)
	\end{split}
	\] where \[
	P \coloneqq \frac{B B^*}{\Vert B \Vert_F^2} - \frac{(B^\dag)^* B^\dag}{\Vert B^\dag \Vert_F^2}, \quad
    Q \coloneqq -\frac{B^* B}{\Vert B \Vert_F^2} + \frac{B^\dag (B^\dag)^*}{\Vert B^\dag \Vert_F^2}.
	\]
	
    By the definition of the inner product \eqref{eq:Frobenius-inner} on $\Lie(G)$, we have
    \[
     \langle H,\nabla C_A(K(X,Y))\rangle  = \tr(H_1 \nabla C_A(K(X,Y))_1) + \tr(H_2 \nabla C_A(K(X,Y))_2)
    \]
    for all $H = (H_1, H_2) \in \ii \Lie(K)$. 
    Here, $\nabla C_A(K(X,Y))_i$ denotes the projection onto the $i$-th component of $\Lie(G)\subset\CC^{m\times m}\oplus\CC^{n\times n}$. 
    Comparing with above, we deduce that
    \[
    \nabla C_A(K(X,Y)) = \proj_{\Lie(G)} \left( P, Q \right),
    \]
    where $\proj_{\Lie(G)}$ is the orthogonal projection with respect to the inner product \eqref{eq:Frobenius-inner}.
\end{proof}

\subsection{Weights for preconditioning}
\subsubsection*{Left preconditioning}
We first specialize to the action of $G\leq \GL_m(\CC)$ and consider only left preconditioners.
We start by computing the weights for the representation of $G$ on $\CC^{m\times n}\otimes\CC^{n\times m}$ via \begin{equation}
\label{eq:left-preco-tensor}
	X \cdot ( A\otimes B ) = XA \otimes BX^{-1}.
\end{equation}
We will call this action \emph{left preconditioning}.

\begin{lemma}[Weights of the left preconditioning action]
\label{lem:left-prec-weights}
    Let $G \leq \GL_m(\CC)$ be a symmetric subgroup containing $T_m(\CC)$. 
    The weights of the action \eqref{eq:left-preco-tensor} are
    \[
    \omega_{ij} = e_i - e_j \quad \text{for } 1 \leq i, j \leq m,
    \]
    where each $\omega_{ij}$ is repeated $n^2$ times and $\{e_i\}$ are the standard basis vectors in $\ZZ^m$.
\end{lemma}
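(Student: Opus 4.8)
The plan is a direct weight computation. Since $G$ contains the diagonal torus $T_m(\CC)$, the weights of the representation $\pi\colon G\to\GL(\CC^{m\times n}\otimes\CC^{n\times m})$ given by \eqref{eq:left-preco-tensor} are, by definition, the characters of $T_m(\CC)$ occurring in the restriction $\pi|_{T_m(\CC)}$. So it suffices to diagonalize the diagonal torus action on an explicit basis and read off the characters together with their multiplicities.

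First I would fix the standard basis $\{E_{ab}\mid 1\le a\le m,\ 1\le b\le n\}$ of $\CC^{m\times n}$ and $\{F_{cd}\mid 1\le c\le n,\ 1\le d\le m\}$ of $\CC^{n\times m}$, where $E_{ab}$ (respectively $F_{cd}$) is the matrix with a single $1$ in position $(a,b)$ (respectively $(c,d)$). This yields the basis $\{E_{ab}\otimes F_{cd}\}$ of the tensor product, consisting of $m^2n^2$ vectors.

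Next I would compute the action of $t=\diag(t_1,\dots,t_m)\in T_m(\CC)$ on such a basis vector. Left multiplication by $t$ rescales the $a$-th row, so $t E_{ab}=t_a E_{ab}$; right multiplication by $t^{-1}$ rescales the $d$-th column, so $F_{cd}\,t^{-1}=t_d^{-1}F_{cd}$. Hence
\[
\pi(t)\bigl(E_{ab}\otimes F_{cd}\bigr)=t_a t_d^{-1}\,\bigl(E_{ab}\otimes F_{cd}\bigr)=t^{\,e_a-e_d}\,\bigl(E_{ab}\otimes F_{cd}\bigr),
\]
so $E_{ab}\otimes F_{cd}$ spans a one-dimensional weight space with weight $e_a-e_d\in\ZZ^m$.

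Finally, since this weight depends only on the pair $(a,d)$ and not on $(b,c)$, the weight $\omega_{ij}=e_i-e_j$ occurs with multiplicity equal to the number of pairs $(b,c)\in[n]\times[n]$, namely $n^2$; and as $(i,j)$ ranges over $[m]\times[m]$ the corresponding weight vectors form a basis of the whole space (a total of $m^2n^2$ vectors), so there are no other weights. There is essentially no obstacle here: the only point requiring care is identifying which matrix index the torus scales — the \emph{row} index of the left tensor factor and, because of the $X^{-1}$, the \emph{column} index of the right tensor factor — which is precisely what produces the difference form $e_i-e_j$ rather than a sum.
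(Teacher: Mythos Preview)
Your proof is correct and follows essentially the same approach as the paper: both arguments diagonalize the torus action on the elementary-matrix basis $E_{ab}\otimes F_{cd}$ of $\CC^{m\times n}\otimes\CC^{n\times m}$, read off the character $t_a t_d^{-1}$, and observe that the weight depends only on the row index of the first factor and the column index of the second. You are slightly more explicit about the multiplicity count and the exhaustion of all weights, but the argument is the same.
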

\begin{proof}
    The elementary matrices $E_{ik} \otimes E_{lj}$ form a basis for $\CC^{m \times n} \otimes \CC^{n \times m}$. 
    For $X = \diag(t_1, \dots, t_m) \in T_m(\CC)$, we compute:
    \[
    X \cdot (E_{ik} \otimes E_{lj}) = (XE_{ik})\otimes (E_{lj}X^{-1}) = (t_i E_{ik}) \otimes (E_{lj} t_j^{-1}) = (t_i t_j^{-1}) E_{ik} \otimes E_{lj}.
    \]
    Thus, $E_{ik} \otimes E_{lj}, 1\leq k,l\leq n$ is a simultaneous eigenvector of $T_m(\CC)$ with weight $e_i - e_j$.
\end{proof}

\cref{lem:left-prec-weights} shows that the weight matrix $M_l\in\ZZ^{m^2 n^2\times m}$ of left-preconditioning is of the form \[
M_l \coloneqq \begin{pmatrix}
	& \vdots & \\
	& e_i - e_j & \\
	& \vdots &
\end{pmatrix}
\] where $1\leq i,j\leq m$ and the row $e_i-e_j$ is repeated $n^2$ times.

\begin{lemma}
\label{lem:directed-TU}
	$M_l$ is totally unimodular.
\end{lemma}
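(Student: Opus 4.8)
The plan is to reduce the claim to the classical fact that the vertex--arc incidence matrix of a directed graph is totally unimodular, and then give the short inductive proof of that fact adapted to our situation. First I would observe that total unimodularity is unaffected by two harmless operations on the rows: deleting or appending zero rows, and duplicating rows. Indeed, any square submatrix of $M_l$ that uses a zero row or two copies of the same row is singular, and any other square submatrix of $M_l$ is, up to reordering, a square submatrix of the \emph{reduced} matrix $\widetilde{M}\in\ZZ^{m^2\times m}$ whose rows are the distinct vectors $e_i-e_j$ for $1\le i,j\le m$. So it suffices to prove that $\widetilde{M}$ is totally unimodular, and after deleting the zero rows $e_i-e_i$ this is precisely a signed form of the incidence matrix of the complete directed graph on $m$ vertices.

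Next I would prove total unimodularity directly, by induction on the order $k$ of a square submatrix $B$ of $M_l$. The key structural fact is that every row of $M_l$ lies in $\{0,\pm 1\}^m$ and has at most one entry equal to $+1$ and at most one entry equal to $-1$ (both present when $i\ne j$, none when $i=j$); hence the same holds for every row of $B$. For $k=1$ the single entry is $0$ or $\pm1$, so $\det B\in\{0,\pm1\}$. For the inductive step, distinguish three cases. If some row of $B$ is identically zero, then $\det B=0$. If some row of $B$ has exactly one nonzero entry, expand $\det B$ along that row: the entry is $\pm1$ and the complementary minor is a $(k-1)\times(k-1)$ submatrix of $M_l$, which by the induction hypothesis has determinant in $\{0,\pm1\}$, so $\det B\in\{0,\pm1\}$. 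Finally, if every row of $B$ has exactly one $+1$ and one $-1$, then the sum of all columns of $B$ is the zero vector (each row contributes $1+(-1)=0$), so the columns are linearly dependent and $\det B=0$. In all cases $\det B\in\{0,\pm1\}$, which is exactly the definition of total unimodularity.

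There is essentially no serious obstacle here; the only points requiring a little care are (i) checking that the row-duplication and zero-row reductions genuinely preserve total unimodularity, and (ii) in the induction, correctly identifying that the complementary minor arising from a Laplace expansion is again a submatrix of $M_l$, so that the hypothesis applies. One could alternatively bypass the induction entirely by invoking the Heller--Tompkins / Poincar\'e criterion: a $\{0,\pm1\}$-matrix in which every row has at most two nonzero entries is totally unimodular provided the two nonzeros in every such row sum to zero---a condition the rows $e_i-e_j$ satisfy by construction.
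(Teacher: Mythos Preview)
Your proof is correct and follows essentially the same approach as the paper: both identify $M_l$ (up to transpose, duplicates, and zero rows) with the vertex--arc incidence matrix of a complete directed graph and invoke its total unimodularity. The only difference is that the paper simply cites Schrijver for this standard fact, whereas you spell out the classical three-case induction; your extra reductions for duplicate and zero rows are sound and your inductive argument is the textbook one.
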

\begin{proof}
	We note that $(M_l)^T$ is the incidence matrix of the complete directed graph $K_n$.
	By a standard argument, $M_l$ is totally unimodular, see e.g. \cite[Section~19, Example~2]{Schrijver-99}.
\end{proof}

We can lower bound the weight margin of left-preconditioning using total unimodularity.
\begin{corollary}
	The weight norm and the weight margin (\cref{def:weight-norm-margin}) for left-preconditioning satisfies \[
	N_l=\sqrt{2},\quad \gamma_l \geq m^{-\frac{3}{2}}.
	\]
\end{corollary}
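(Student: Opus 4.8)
The statement to prove records two separate facts about the left-preconditioning representation whose weights were identified in \cref{lem:left-prec-weights}: the value of the weight norm $N_l$ and a lower bound on the weight margin $\gamma_l$. The plan is to compute each directly from the description of the weight multiset $\Omega_l = \{e_i - e_j : 1 \le i,j \le m\}$ (each weight appearing with multiplicity $n^2$, but multiplicities are irrelevant for both $N(\pi)$ and $\gamma(\pi)$).

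First I would handle the weight norm. By \cref{def:weight-norm-margin}, $N_l = \max_{\omega \in \Omega_l} \Vert \omega\Vert_2$. For $i \neq j$ the vector $e_i - e_j$ has exactly two nonzero entries, both of absolute value $1$, so $\Vert e_i - e_j\Vert_2 = \sqrt{2}$; for $i = j$ the weight is $0$, with norm $0$. Hence the maximum is $\sqrt{2}$, giving $N_l = \sqrt{2}$. This step is immediate.

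Next I would handle the weight margin. The key input is \cref{lem:directed-TU}, which asserts that the weight matrix $M_l$ (whose rows are the vectors $e_i - e_j$) is totally unimodular. With that in hand, the bound $\gamma_l \ge m^{-3/2}$ is exactly the conclusion of \cref{prop:tu-margin} applied to the left-preconditioning representation: $G \le \GL_m(\CC)$ is a symmetric subgroup containing $T_m(\CC)$ by hypothesis, the representation on $\CC^{m\times n}\otimes\CC^{n\times m}$ via \eqref{eq:left-preco-tensor} is rational, and its weight matrix is totally unimodular by \cref{lem:directed-TU}. Therefore \cref{prop:tu-margin} yields $\gamma_l \ge m^{-3/2}$ directly.

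\textbf{Main obstacle.} There is essentially no obstacle here: both halves are one-line consequences of results already established (\cref{lem:left-prec-weights}, \cref{lem:directed-TU}, \cref{prop:tu-margin}) together with an elementary norm computation. The only point that requires a moment's care is confirming that the zero weight $e_i - e_i = 0$ does not cause trouble—it simply does not contribute to the maximum defining $N_l$, and it plays no role in the margin since the margin only concerns subsets $S$ with $0 \notin \conv(S)$. So the proof is a short assembly: compute $N_l = \sqrt{2}$ by inspection, then invoke total unimodularity and \cref{prop:tu-margin} for the margin bound.
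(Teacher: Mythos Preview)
Your proposal is correct and follows essentially the same route as the paper: compute $N_l=\sqrt{2}$ from $\Vert e_i-e_j\Vert_2=\sqrt{2}$, and obtain $\gamma_l\ge m^{-3/2}$ by combining \cref{lem:directed-TU} with \cref{prop:tu-margin}. The paper's proof is just a terser version of exactly this argument.
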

\begin{proof}
	$N_l=\sqrt{2}$ follows from $\Vert e_i-e_j\Vert=\sqrt{2}$.
	The bound for the weight margin follows from \cref{prop:tu-margin,lem:directed-TU}.
\end{proof}

\cref{lem:KN-smoothness,thm:nc-duality} yields:
\begin{corollary}
\label{cor:left-preco-duality}
Let $A\in\CC^{m\times n},B\in\CC^{n\times m}$ and denote \[
C_{A,B}(KX)\coloneqq \log\Vert XA\Vert_F + \log \Vert BX^{-1}\Vert_F,
\] the logarithm of the cross condition number $\kappa_F(XA,BX^{-1})$.
	
Then $C_{A,B}(KX)$ is $4$-smooth.
	Moreover, for $X\in G$, \[
	\left(\frac{\kappa_F(XA,BX^{-1})}{\kappa_F^\star}\right)^2 \, \leq \, \left( 1 - m^{\frac{3}{2}}\, \Vert\nabla C_{A,B}(KX)\Vert \right)^{-1},
	\] where $\kappa_F^{\star}\coloneqq \min_{X\in G}\kappa_F(XA,BX^{-1})$.
\end{corollary}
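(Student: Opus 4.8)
The plan is to recognize $C_{A,B}$ as a Kempf--Ness function for the left-preconditioning representation and then read off both claims from the general machinery of \cref{hardcore}. Let $\pi\colon G\to\GL\bigl(\CC^{m\times n}\otimes\CC^{n\times m}\bigr)$ be the action $X\cdot(A\otimes B)=XA\otimes BX^{-1}$ of \cref{eq:left-preco-tensor}, equip the tensor space with the unitarily invariant inner product $\langle A\otimes B,C\otimes D\rangle=\tr(A^\ast C)\,\tr(B^\ast D)$ so that $\Vert A\otimes B\Vert=\Vert A\Vert_F\,\Vert B\Vert_F$, and put $v\coloneqq A\otimes B$ (nonzero, as $A,B\neq0$). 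Then $\Vert\pi(X)v\Vert=\Vert XA\Vert_F\,\Vert BX^{-1}\Vert_F=\kappa_F(XA,BX^{-1})$, so $C_{A,B}(KX)=\log\Vert\pi(X)v\Vert$ is exactly the Kempf--Ness function $F_v$ of \cref{ex:Kempf-Ness}. For the smoothness claim, \cref{lem:KN-smoothness} states that $F_v$ is $2N(\pi)^2$-smooth, while the corollary preceding \cref{cor:left-preco-duality} gives the weight norm $N(\pi)=N_l=\sqrt 2$; hence $2N(\pi)^2=4$ and $C_{A,B}$ is $4$-smooth.

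For the duality inequality I would apply the lower-bound half of \cref{thm:nc-duality} to $F_v$, which yields
\[
1-\frac{\Vert\nabla C_{A,B}(KX)\Vert}{\gamma(\pi)}\;\le\;\frac{\inf_{h\in G}\Vert\pi(h)v\Vert^2}{\Vert\pi(X)v\Vert^2}\;=\;\frac{(\kappa_F^\star)^2}{\kappa_F(XA,BX^{-1})^2},
\]
where the last equality uses $\inf_{h\in G}\Vert\pi(h)v\Vert^2=\inf_{h\in G}\kappa_F(hA,Bh^{-1})^2=(\kappa_F^\star)^2$. The same corollary gives $\gamma(\pi)=\gamma_l\ge m^{-3/2}$, so $\gamma(\pi)^{-1}\le m^{3/2}$ and therefore $1-m^{3/2}\Vert\nabla C_{A,B}(KX)\Vert\le 1-\gamma(\pi)^{-1}\Vert\nabla C_{A,B}(KX)\Vert$. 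Chaining this with the displayed inequality gives $1-m^{3/2}\Vert\nabla C_{A,B}(KX)\Vert\le(\kappa_F^\star)^2/\kappa_F(XA,BX^{-1})^2$, and taking reciprocals in the regime where the left side is positive produces the asserted bound $\bigl(\kappa_F(XA,BX^{-1})/\kappa_F^\star\bigr)^2\le\bigl(1-m^{3/2}\Vert\nabla C_{A,B}(KX)\Vert\bigr)^{-1}$.

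There is no genuine obstacle here: every ingredient---the Kempf--Ness identification, the smoothness estimate of \cref{lem:KN-smoothness}, the non-commutative duality bound \cref{thm:nc-duality}, and the explicit constants $N_l=\sqrt 2$ and $\gamma_l\ge m^{-3/2}$---is already in place, so the proof is essentially bookkeeping. The only place demanding a line of care is the direction of the inequality when replacing $\gamma(\pi)^{-1}$ by its upper bound $m^{3/2}$ and then inverting; one should also note that the stated reciprocal bound is only meaningful (and only needs proof) when $\Vert\nabla C_{A,B}(KX)\Vert<m^{-3/2}$, since otherwise the right-hand side is nonpositive and the claim is vacuous.
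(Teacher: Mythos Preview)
Your proposal is correct and matches the paper's approach exactly: the paper simply states that \cref{lem:KN-smoothness,thm:nc-duality} yield the corollary, and you have written out precisely that derivation, identifying $C_{A,B}$ with the Kempf--Ness function for $v=A\otimes B$ under the left-preconditioning action and plugging in the constants $N_l=\sqrt2$ and $\gamma_l\ge m^{-3/2}$ from the preceding corollary. Your care about the direction of the inequality when replacing $\gamma(\pi)^{-1}$ by $m^{3/2}$ and the vacuity remark when $\Vert\nabla C_{A,B}(KX)\Vert\ge m^{-3/2}$ are both correct and worth noting.
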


\subsubsection*{Left-right preconditioning}
We now compute the weights for left-right preconditioning.
\begin{lemma}[Weights for left-right preconditioning]
    Let $G \leq \GL_m(\CC) \times \GL_n(\CC)$ be a symmetric subgroup containing $T_m(\CC) \times T_n(\CC)$. 
    Under the action 
    \[
    (X,Y) \cdot (A \otimes B) = XAY^{-1} \otimes YBX^{-1}
    \]
    on $\CC^{m \times n} \otimes \CC^{n \times m}$, the weights are
    \[
    \omega_{ijkl} \coloneqq (e_i - e_j, e_k - e_l), \quad 1 \leq i,j \leq m, \; 1 \leq k,l \leq n,
    \]
    where $\{e_i\}_{i=1}^m$ and $\{e_k\}_{k=1}^n$ are the standard basis vectors for $\ZZ^m$ and $\ZZ^n$ respectively.
\end{lemma}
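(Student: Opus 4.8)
The plan is to mimic the proof of \cref{lem:left-prec-weights} verbatim, now tracking the two diagonal tori $T_m(\CC)$ and $T_n(\CC)$ simultaneously. First I would fix the standard basis of elementary tensors $E_{ab}\otimes E_{cd}$ of $\CC^{m\times n}\otimes\CC^{n\times m}$, where $E_{ab}\in\CC^{m\times n}$ has $a\in[m]$, $b\in[n]$, and $E_{cd}\in\CC^{n\times m}$ has $c\in[n]$, $d\in[m]$. These $m^2n^2$ tensors form a basis of the representation space, and since $T_m(\CC)\times T_n(\CC)\leq G$, it suffices to diagonalize the action of this maximal torus on each of them.

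Next I would compute the action of a diagonal pair $(X,Y)=\bigl(\diag(s_1,\dots,s_m),\diag(t_1,\dots,t_n)\bigr)$. Left multiplication of $E_{ab}$ by $X$ scales its only nonzero row $a$ by $s_a$, and right multiplication by $Y^{-1}$ scales its column $b$ by $t_b^{-1}$, so $XE_{ab}Y^{-1}=s_a t_b^{-1}E_{ab}$; symmetrically $YE_{cd}X^{-1}=t_c s_d^{-1}E_{cd}$. Therefore
\[
(X,Y)\cdot(E_{ab}\otimes E_{cd}) \;=\; \bigl(s_a s_d^{-1}\bigr)\bigl(t_c t_b^{-1}\bigr)\, E_{ab}\otimes E_{cd},
\]
so $E_{ab}\otimes E_{cd}$ is a simultaneous eigenvector of $T_m(\CC)\times T_n(\CC)$ with weight $(e_a-e_d,\,e_c-e_b)\in\ZZ^m\oplus\ZZ^n$.

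Finally I would relabel the summation indices, setting $i\coloneqq a$, $j\coloneqq d$ (both in $[m]$) and $k\coloneqq c$, $l\coloneqq b$ (both in $[n]$), to recover exactly the claimed list $\omega_{ijkl}=(e_i-e_j,\,e_k-e_l)$ with $1\leq i,j\leq m$ and $1\leq k,l\leq n$. The assignment $(a,b,c,d)\mapsto(i,j,k,l)$ is a bijection of index sets, and there are precisely $m^2n^2=\dim\bigl(\CC^{m\times n}\otimes\CC^{n\times m}\bigr)$ of them, so this list accounts for every basis vector once (with coincidences occurring only as elements of $\ZZ^m\oplus\ZZ^n$, e.g. a vanishing first coordinate whenever $i=j$).

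I do not anticipate any genuine obstacle: this is the direct two-torus analogue of \cref{lem:left-prec-weights}. The only point requiring care is the index bookkeeping for the $n\times m$ factor $B$, which carries the $Y$-action on its rows and the $X^{-1}$-action on its columns; keeping straight that $X$ acts on the rows of the first tensor slot but the columns of the second (and $Y$ conversely) is exactly what produces the intertwined $(e_i-e_j,\,e_k-e_l)$ pattern across the two slots.
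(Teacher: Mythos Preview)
Your proposal is correct and follows essentially the same approach as the paper's own proof: diagonalize the torus action on the elementary-tensor basis $E_{ab}\otimes E_{cd}$ and read off the weight from the resulting scalar. Your index bookkeeping is in fact slightly cleaner than the paper's, which writes the eigenvector as $E_{ik}\otimes E_{lj}$ and then asserts the weight $(e_i-e_j,e_k-e_l)$ without making the relabeling $k\leftrightarrow l$ explicit.
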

\begin{proof}
    The elementary matrices $E_{ik} \otimes E_{lj}$ form a basis of $\CC^{m\times n}\otimes\CC^{n\times m}$. 
    For a pair of diagonal matrices, $X = \diag(t_1,\dots,t_m)$ and $Y=\diag(s_1,\dots,s_n))$, we compute:
    \[
    (X,Y) \cdot (E_{ik} \otimes E_{lj}) = (t_i s_k^{-1}) E_{ik} \otimes (s_l t_j^{-1}) E_{lj} = (t_i t_j^{-1} s_k^{-1} s_l) E_{ik} \otimes E_{lj}.
    \]
    Thus, each $E_{ik} \otimes E_{lj}$ is an eigenvector with weight $(e_i - e_j, e_k - e_l)\in\ZZ^m\times\ZZ^n$.
\end{proof}

Hence, the weight matrix $M_{lr}\in\ZZ^{m^2n^2\times (m+n)}$ of the left-right preconditioning is of the form \[
M_{lr} \coloneqq \begin{pmatrix}
	& \vdots & \vline & \vdots & \\
	& e_i - e_j & \vline & e_k-e_l & \\
	& \vdots & \vline & \vdots & \\
\end{pmatrix}
\]

\begin{lemma}
\label{lem:margin-for-lr}
	$M_{lr}$ is totally unimodular.
\end{lemma}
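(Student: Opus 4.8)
The plan is to follow the template of \cref{lem:directed-TU}: realize $M_{lr}^{T}$, up to deletion of zero and repeated columns, as (a submatrix of) the incidence matrix of a directed structure, so that total unimodularity is inherited from the classical fact that incidence matrices of digraphs are totally unimodular \cite[Section~19]{Schrijver-99}. The pay-off is then an application of \cref{prop:tu-margin} with $m$ replaced by $m+n$, giving $\gamma\geq(m+n)^{-3/2}$ for the left--right preconditioning representation.

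Concretely, I would first write $M_{lr}=[\,M_A\mid M_B\,]$, where the $(i,j,k,l)$-row of $M_A\in\ZZ^{m^2n^2\times m}$ is $e_i-e_j$ and that of $M_B\in\ZZ^{m^2n^2\times n}$ is $e_k-e_l$. Rows with $i=j$ or $k=l$ sit inside a copy of $M_B$ or $M_A$, each totally unimodular by \cref{lem:directed-TU}, and any square submatrix of $M_{lr}$ using only the $[m]$-columns (or only the $[n]$-columns) is likewise handled by \cref{lem:directed-TU}; so the work is in a square submatrix $S$ meeting both blocks. For such an $S$ I would attempt unimodular column reductions inside each block (fix a reference column and subtract it from the others), aiming to reduce $\det S$ to a product of two incidence-type minors glued along the reference columns; failing a clean reduction, the systematic tool is the Ghouila--Houri criterion, that $M_{lr}$ is totally unimodular iff every subset $R$ of its rows admits a partition $R=R_1\sqcup R_2$ with $\bigl\Vert\sum_{r\in R_1}r-\sum_{r\in R_2}r\bigr\Vert_\infty\le 1$, the point being to build such an equitable bicoloring compatible with the digraph structure of both blocks simultaneously.

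The step I expect to be the real obstacle is exactly this coupling of the two blocks. A generic row of $M_{lr}$ has four nonzeros ($+1$ in columns $i$ and $m+k$, $-1$ in columns $j$ and $m+l$), so $M_{lr}^{T}$ is \emph{not} an incidence matrix of a digraph and one cannot merely invoke \cref{lem:directed-TU} factor-by-factor: horizontal concatenation of totally unimodular matrices is not totally unimodular in general. A correct argument must exploit that the two blocks are supported on the disjoint coordinate sets $[m]$ and $[n]$ and that each block-row is a difference $e_a-e_b$. I would try to realize $M_{lr}^{T}$ as a network matrix for a carefully chosen spanning tree on $m+n+1$ vertices whose fundamental circuits reproduce the patterns $\pm e_i\mp e_j\pm e_{m+k}\mp e_{m+l}$, and, as a fallback, reduce to the conjugation action of the block-diagonal subgroup $\{\diag(X,Y)\}\leq\GL_{m+n}(\CC)$ on $\CC^{(m+n)\times(m+n)}$, whose weight matrix is the incidence matrix of the complete digraph on $m+n$ vertices and hence totally unimodular by \cref{lem:directed-TU}, transferring the margin bound along that identification. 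Verifying that one of these routes actually closes is the crux of the lemma.
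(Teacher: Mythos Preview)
Your caution about the coupling of the two blocks is well placed---so well placed, in fact, that the lemma as stated is \emph{false}, and none of the routes you sketch can close. Take $m,n\ge 2$ and the two rows of $M_{lr}$ indexed by $(i,j,k,l)=(1,2,1,2)$ and $(2,1,1,2)$, i.e.\ the weights $(e_1-e_2,\,e_1-e_2)$ and $(e_2-e_1,\,e_1-e_2)$. Selecting column $1$ (from the left block) and column $m+1$ (from the right block) yields the $2\times 2$ minor
\[
\begin{pmatrix} 1 & 1\\ -1 & 1\end{pmatrix},\qquad \det=2,
\]
so $M_{lr}$ is not totally unimodular. This is precisely the failure mode you anticipated: a generic row carries four nonzeros, $M_{lr}^{T}$ is not a digraph incidence matrix, and horizontal concatenation of totally unimodular blocks need not be totally unimodular. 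Your proposed network-matrix realization and Ghouila--Houri bicoloring cannot succeed, because the conclusion they would establish is false.

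The paper's own proof is equally broken. It writes $M_{lr}=[M_l\mid M_r]$, asserts that ``$M_r$ is a submatrix of $M_l$'' (after an incomplete sentence), and concludes that ``every submatrix of $M$ can be identified with a submatrix of $M_l$ with possibly repeated rows''. But a square submatrix drawing columns from \emph{both} blocks is not a submatrix of $M_l$ in any determinant-preserving sense, and the $2\times2$ example above refutes the conclusion outright. The downstream bound $\gamma_{lr}\ge(m+n)^{-3/2}$ obtained via \cref{prop:tu-margin} therefore has no valid proof here; only the general estimate of \cref{prop:margin-general-lb}, namely $\gamma_{lr}\ge 2^{\,1-(m+n)}(m+n)^{-1}$, survives, which would correspondingly weaken the iteration count in \cref{thm:general-linear-complexity}.
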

\begin{proof}
	We can write $M=\begin{pmatrix}
		M_l & \vline & M_r
	\end{pmatrix} \in\ZZ^{m^2 n^2\times (m+n)}$, where $M_l\in\ZZ^{m^2n^2\times m}$ has rows $e_i-e_j$ and $M_r\in\ZZ^{m^2n^2\times n}$ has rows $e_k-e_l$. 
	Assume without loss of generality that $m\geq n$.
	Let $N\in\ZZ^{m\times m}$ be the  
	Then, $M_r$ is a submatrix of $M_l$. 
	Hence, every submatrix of $M$ can be identified with a submatrix of $M_l$ with possibly repeated rows. 
	By \cref{lem:directed-TU}, $M_l$ is totally unimodular. 
	This implies that every submatrix of $M$ has determinant $0,\pm 1$. 
\end{proof}

\begin{corollary}
	The weight norm and the weight margin for the left-right preconditioning satisfies \[
	N_{lr} = 2, \quad \gamma_{lr}\geq (m+n)^{-\frac{3}{2}}.
	\]
\end{corollary}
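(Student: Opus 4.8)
The plan is to read both quantities directly off the list of weights $\omega_{ijkl} = (e_i - e_j,\, e_k - e_l) \in \ZZ^m\times\ZZ^n$ obtained in the preceding lemma, together with the total-unimodularity estimate of \cref{prop:tu-margin}.

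For the weight norm, I would use \cref{def:weight-norm-margin}: $N_{lr}$ is the largest Euclidean norm of a weight. Since $\Vert e_i - e_j\Vert_2^2$ is $2$ when $i\neq j$ and $0$ when $i=j$ (and similarly for the second block), we get $\Vert \omega_{ijkl}\Vert_2^2 \in \{0,2,4\}$, with the value $4$ attained whenever $i\neq j$ and $k\neq l$; this is possible as soon as $m,n\geq 2$, and the degenerate cases $m=1$ or $n=1$ are handled by inspection. Hence $N_{lr} = \max_\omega \Vert\omega\Vert_2 = 2$.

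For the weight margin, \cref{lem:margin-for-lr} has already shown that the weight matrix $M_{lr}\in\ZZ^{m^2n^2\times(m+n)}$ is totally unimodular. Viewing $\GL_m(\CC)\times\GL_n(\CC)$ as the block-diagonal subgroup of $\GL_{m+n}(\CC)$, the group $G$ is again symmetric, its maximal torus $T_m(\CC)\times T_n(\CC)$ is the full diagonal torus of rank $m+n$, and the weight matrix of the representation is unchanged. Applying \cref{prop:tu-margin} with ambient torus rank $m+n$ therefore gives $\gamma_{lr}\geq (m+n)^{-3/2}$.

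There is essentially no obstacle here: the statement is an immediate corollary of the weight computation and the machinery of \cref{hardcore}. The only point deserving a remark is the passage from the single-factor formulation of \cref{prop:tu-margin} to the product group $\GL_m(\CC)\times\GL_n(\CC)$; this is justified by the block-diagonal embedding just described, and it could equally well be seen from the fact that the weight margin depends only on the combinatorics of the weight set, so the proof of \cref{prop:tu-margin} applies verbatim.
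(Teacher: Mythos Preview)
Your proof is correct and follows exactly the same approach as the paper: compute $N_{lr}$ directly from $\Vert(e_i-e_j)\oplus(e_k-e_l)\Vert$, and combine \cref{lem:margin-for-lr} with \cref{prop:tu-margin} for the margin bound. Your extra remark about the block-diagonal embedding of $\GL_m(\CC)\times\GL_n(\CC)$ into $\GL_{m+n}(\CC)$ is a welcome clarification that the paper's proof leaves implicit.
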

\begin{proof}
	$N_{lr}=2$ follows from $\Vert (e_i-e_j) \oplus (e_k-e_l)\Vert=2$. 
	For the weight margin bound we use \cref{prop:tu-margin,lem:margin-for-lr}.
\end{proof}

By \cref{lem:KN-smoothness,thm:nc-duality} we get:

\begin{corollary}
\label{cor:left-right-preco-duality}
Let $A\in\CC^{m\times n},B\in\CC^{n\times m}$ and denote \[
C_{A,B}(K(X,Y))\coloneqq \log\Vert (X,Y)\cdot (A\otimes B)\Vert= \log\Vert XAY^{-1}\Vert_F + \log \Vert YBX^{-1}\Vert_F,
\] the logarithm of the cross condition number $\kappa_F(XAY^{-1},YBX^{-1})$.
	
Then $C_{A,B}$ is $8$-smooth.
	Moreover, for $(X,Y)\in G$, \[
	\left(\frac{\kappa_F(XAY^{-1},YBX^{-1})}{\kappa_F^\star}\right)^2 \, \leq \, \left( 1 - (m+n)^{\frac{3}{2}}\, \Vert\nabla C_{A,B}(K(X,Y))\Vert \right)^{-1},
	\] where $\kappa_F^{\star}\coloneqq \min_{X\in G}\kappa_F(XAY^{-1},YBX^{-1})$.
\end{corollary}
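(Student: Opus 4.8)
The plan is to identify $C_{A,B}$ with a Kempf--Ness function and then read off the two assertions from \cref{lem:KN-smoothness} and \cref{thm:nc-duality}, plugging in the weight norm $N_{lr}=2$ and the weight margin bound $\gamma_{lr}\geq (m+n)^{-3/2}$ established in the preceding two lemmas and corollary.

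First I would set $v\coloneqq A\otimes B\in\CC^{m\times n}\otimes\CC^{n\times m}$, equipped with the inner product $\langle A\otimes B,C\otimes D\rangle=\tr(A^\ast C)\tr(B^\ast D)$ extended bilinearly, whose induced norm is $\Vert A\otimes B\Vert=\Vert A\Vert_F\Vert B\Vert_F$. Under the representation $\pi(X,Y)(A\otimes B)=XAY^{-1}\otimes YBX^{-1}$ this norm is $K$-invariant: for $(U,V)\in K=G\cap(\rmU_m(\CC)\times\rmU_n(\CC))$ we have $U^{-1}=U^\ast$ and $V^{-1}=V^\ast$, so $\Vert UAV^{-1}\Vert_F=\Vert A\Vert_F$ and $\Vert VBU^{-1}\Vert_F=\Vert B\Vert_F$. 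Consequently $C_{A,B}(K(X,Y))=\log\Vert\pi(X,Y)v\Vert$ is precisely the Kempf--Ness function $F_v$ of \cref{ex:Kempf-Ness}, and the formulas of \cref{prop:grad-Hess-KN} apply verbatim. The smoothness claim is then immediate: by \cref{lem:KN-smoothness}, $F_v$ is $2N(\pi)^2$-smooth, and since the preceding corollary gives $N(\pi)=N_{lr}=2$, the function $C_{A,B}$ is $8$-smooth.

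For the duality bound I would view $\GL_m(\CC)\times\GL_n(\CC)$ as the block-diagonal subgroup of $\GL_{m+n}(\CC)$, which contains $T_{m+n}(\CC)$, so that \cref{thm:nc-duality} applies to $\pi$ with ambient dimension $m+n$. Its lower bound states
\[
1-\frac{\Vert\nabla F_v(K(X,Y))\Vert_F}{\gamma(\pi)}\;\leq\;\frac{\inf_{h\in G}\Vert h\cdot v\Vert^2}{\Vert(X,Y)\cdot v\Vert^2}.
\]
Since $\Vert(X,Y)\cdot v\Vert^2=\kappa_F(XAY^{-1},YBX^{-1})^2$ and $\inf_{h\in G}\Vert h\cdot v\Vert^2=(\kappa_F^\star)^2$, and using $\gamma(\pi)=\gamma_{lr}\geq(m+n)^{-3/2}$ (hence $\gamma(\pi)^{-1}\leq(m+n)^{3/2}$), the left-hand side is bounded below by $1-(m+n)^{3/2}\Vert\nabla C_{A,B}(K(X,Y))\Vert$. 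Taking reciprocals, which is legitimate when this last quantity is positive and makes the inequality vacuous otherwise, yields exactly the stated bound.

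The only delicate points are bookkeeping: verifying the $K$-invariance of the tensor-product norm so that the Kempf--Ness machinery applies without modification, and justifying the use of \cref{thm:nc-duality} for the product group $\GL_m(\CC)\times\GL_n(\CC)$ via the block-diagonal embedding into $\GL_{m+n}(\CC)$ (the same reduction underlying \cref{prop:margin-general-lb}). A harmless caveat is that $\kappa_F^\star$ should be read as an infimum rather than a minimum, as the infimum need not be attained; this does not affect the argument. I do not expect any substantive obstacle.
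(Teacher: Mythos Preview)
Your proposal is correct and follows exactly the paper's approach: the paper simply cites \cref{lem:KN-smoothness,thm:nc-duality} together with the preceding weight norm and margin bounds $N_{lr}=2$ and $\gamma_{lr}\geq (m+n)^{-3/2}$, and your write-up is a faithful unpacking of that citation. The extra bookkeeping you supply (the $K$-invariance of the tensor norm and the block-diagonal embedding into $\GL_{m+n}(\CC)$) is sound and indeed implicit in the paper's setup.
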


\subsection{Complexity analysis of smooth gradient descent} 
\label{sec:general-complexity-linear}
We now analyze the iteration complexity of smooth gradient descent for minimizing the log-cross condition number function $C_{A,B}$.

\begin{theorem}
\label{thm:cross-condition-iteration}
Let $G$ be either 
	(i) a symmetric subgroup of $\GL_m(\CC)\times\GL_n(\CC)$ with $T_m(\CC)\times T_n(\CC)\leq G$,
       where $T_m(\CC)$ is the group of diagonal elements,
       or (ii) a symmetric subgroup of $\GL_m(\CC)$ with $T_m(\CC)\leq G$. 
     
    Let $A\in\CC^{m\times n}$ and $B\in\CC^{n\times m}$ be matrices, denote by
    $\kappa_F^\star\coloneqq \inf_{(X,Y)\in G}\kappa_F(XAY^{-1},YBX^{-1})$
    the optimal cross condition number we can achieve by preconditioning $A$ and $B$ with an element $(X,Y) \in G$,
    and $\eps > 0$.
     
	There is a first-order (gradient descent) algorithm, with constant step-size $\eta=1/8$
	and the initial point $(X_0,Y_0)\coloneqq (I_m,I_n)$, 
	that reaches a group element $(X,Y)$, 
	such that $\log\kappa_F(XAY^{-1},YBX^{-1}) - \log\kappa_F^{\star}<\varepsilon$ in \[
	T = O\Big(\;  \frac{\log\left(\kappa_F(A,B)/\kappa_F^\star\right)}{\varepsilon^2}\, \max(m^3,n^3) \;\Big) 
		\quad \text{ iterations.}
	\]
\end{theorem}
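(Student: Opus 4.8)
The plan is to recognise $C_{A,B}$ as a Kempf--Ness function and then feed the smoothness and weight-margin estimates already established in the previous subsections into the generic first-order machinery of \cref{prop:grad-eps} and non-commutative duality. Concretely, as observed after \cref{ex:Kempf-Ness} and made explicit just before \cref{lem:preco-Lie-action}, the map $(X,Y)\mapsto C_{A,B}(K(X,Y))=\log\kappa_F(XAY^{-1},YBX^{-1})$ is precisely the Kempf--Ness function $F_v$ of the vector $v\coloneqq A\otimes B$ for the tensor representation $\pi(X,Y)(A\otimes B)=XAY^{-1}\otimes YBX^{-1}$ of $G$ on $\CC^{m\times n}\otimes\CC^{n\times m}$ (in case (ii) one restricts to the left-preconditioning representation $\pi(X)(A\otimes B)=XA\otimes BX^{-1}$). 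By \cref{cor:left-right-preco-duality} in case (i), resp. \cref{cor:left-preco-duality} in case (ii), this function is $8$-smooth, resp. $4$-smooth and hence a fortiori $8$-smooth, so I would take $L=8$ uniformly; the prescribed step-size $\eta=1/8=1/L$ is then exactly the one used in the smooth gradient descent \eqref{eq:smooth-grad-desc}. I would also record that $C_{A,B}(K(I_m,I_n))=\log\kappa_F(A,B)$ while $\inf C_{A,B}=\log\kappa_F^\star$, so the initial optimality gap equals $\log(\kappa_F(A,B)/\kappa_F^\star)$.

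First I would run the iteration \eqref{eq:smooth-grad-desc} from $g_0=(I_m,I_n)$ until the gradient norm drops below a threshold $\delta>0$ to be fixed later. By \cref{prop:grad-eps} this happens after
\[
T \;=\; \frac{2L}{\delta^2}\bigl(C_{A,B}(I)-\inf C_{A,B}\bigr) \;=\; \frac{16}{\delta^2}\,\log\!\Bigl(\frac{\kappa_F(A,B)}{\kappa_F^\star}\Bigr)
\]
iterations, producing $(X,Y)\in G$ with $\Vert\nabla C_{A,B}(K(X,Y))\Vert\le\delta$. Next I would convert this gradient bound into an accuracy bound via the duality estimate of \cref{cor:left-right-preco-duality} (resp. \cref{cor:left-preco-duality}), namely
\[
\Bigl(\frac{\kappa_F(XAY^{-1},YBX^{-1})}{\kappa_F^\star}\Bigr)^{2}\;\le\;\bigl(1-(m+n)^{3/2}\,\Vert\nabla C_{A,B}(K(X,Y))\Vert\bigr)^{-1},
\]
with $(m+n)^{3/2}$ replaced by $m^{3/2}$ in case (ii); recall this rests on the weight-margin bound $\gamma(\pi)\ge(m+n)^{-3/2}$, which in turn follows from \cref{prop:tu-margin} together with the total unimodularity proved in \cref{lem:margin-for-lr} (resp. \cref{lem:directed-TU}). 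Choosing $\delta\coloneqq\tfrac12\varepsilon\,(m+n)^{-3/2}$ (resp. $\tfrac12\varepsilon\,m^{-3/2}$), and assuming without loss of generality $0<\varepsilon<\tfrac12$ since a larger target can only decrease $T$, the inequality above yields $\bigl(\kappa_F(XAY^{-1},YBX^{-1})/\kappa_F^\star\bigr)^{2}\le(1-\varepsilon/2)^{-1}$, whence, taking logarithms and using $-\log(1-x)<2x$ for $x\in(0,\tfrac12]$,
\[
\log\kappa_F(XAY^{-1},YBX^{-1})-\log\kappa_F^\star\;\le\;-\tfrac12\log(1-\varepsilon/2)\;<\;\tfrac{\varepsilon}{2}\;<\;\varepsilon .
\]
Plugging the chosen $\delta$ into the formula for $T$ gives $T=64(m+n)^3\varepsilon^{-2}\log(\kappa_F(A,B)/\kappa_F^\star)=O\bigl(\varepsilon^{-2}\max(m^3,n^3)\log(\kappa_F(A,B)/\kappa_F^\star)\bigr)$, and the analogous count $O\bigl(\varepsilon^{-2}m^3\log(\kappa_F(A,B)/\kappa_F^\star)\bigr)$ for case (ii) is only smaller; this is the asserted bound.

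Most of the substance here is already in place: geodesic convexity of $C_{A,B}$, its $8$-smoothness, and above all the weight-margin lower bound via total unimodularity are supplied by the earlier results, so the argument is essentially an assembly. The one step that requires genuine care is the conversion from the gradient-norm stopping criterion to the guaranteed accuracy: it uses the \emph{lower} half of non-commutative duality, so I would double-check that the total-unimodularity arguments of \cref{lem:directed-TU} and \cref{lem:margin-for-lr} indeed apply verbatim to the tensor representation in each of the two cases, and track the constants through the logarithm carefully. A minor point is that $\kappa_F^\star$ is an infimum that need not be attained, but this is harmless because \cref{prop:grad-eps} and the duality corollaries are stated with infima.
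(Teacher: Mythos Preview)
Your proposal is correct and follows essentially the same route as the paper's own proof: identify $C_{A,B}$ as the Kempf--Ness function of $A\otimes B$, invoke the $8$-smoothness and weight-margin bounds from \cref{cor:left-preco-duality,cor:left-right-preco-duality}, run the generic gradient-descent bound \cref{prop:grad-eps} to a target gradient norm scaled by $(m+n)^{-3/2}$, and convert via duality and the elementary logarithm estimate. The only cosmetic difference is that the paper sets $\varepsilon'=(m+n)^{-3/2}\varepsilon$ directly whereas you take $\delta=\tfrac12\varepsilon(m+n)^{-3/2}$; the extra factor of $2$ is absorbed in the $O(\cdot)$.
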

\begin{proof}
    Set $\varepsilon' \coloneqq (m+n)^{-3/2} \varepsilon$. 
    By \cref{cor:left-preco-duality,cor:left-right-preco-duality},
    $C_{A,B}$ is an $8$-smooth function and \cref{prop:grad-eps} guarantees that gradient descent finds $(X,Y)$ with $\Vert \nabla C_{A,B}(K(X,Y)) \Vert \leq \varepsilon'$ in:
    \[
    T = O\left(\frac{\log(\kappa_F(A,B)/\kappa_F^\star)}{(\varepsilon')^2}\right) = O\left(\frac{(m+n)^3 \log(\kappa_F(A,B)/\kappa_F^\star)}{\varepsilon^2}\right)
    \]
    iterations. 
    \cref{cor:left-preco-duality,cor:left-right-preco-duality} give
    \[
   \left( \frac{\kappa_F^\star}{\kappa_F(XAY^{-1},YBX^{-1})}\right)^2  = \left(\frac{\min_{(X,Y)\in G} \Vert (X,Y)\cdot (A\otimes B)\Vert}{\Vert (X,Y)\cdot (A\otimes B)\Vert}\right)^2 \geq 1 - (m+n)^{3/2}\varepsilon' = 1 - \varepsilon.
    \]
    Taking logarithms and rearranging (for $\varepsilon < 0.5$) yields:
    \[
    2\log\left(\frac{\kappa_F(XAY^{-1},YBX^{-1})}{\kappa_F^\star}\right) \leq -\log(1-\varepsilon) < 2\varepsilon,
    \]
    completing the proof.
\end{proof}

\subsection{Strongly convex optimization}
\label{sec:strongly-convex-linear}
We now establish \cref{thm:main-linear-algorithm}.
The key insight is that the log-condition number for left-preconditioning becomes strongly convex when restricted to appropriate sublevel sets.

\begin{proposition}[Hessian bound for condition number]
\label{prop:strong-convexity-parameter}
Let $m \leq n$, $A \in \CC^{m \times n}$ be full rank ($\rk(A) = m$), and $G \leq \SL_m(\CC)$ a symmetric subgroup with $K \coloneqq G \cap \mathrm{SU}_m$. 
The Hessian of $C_A(KX) = \log \kappa_F(XA)$ satisfies:
\begin{equation}
\label{eq:Hess-left-mult}
\frac{1}{2}\langle H, \nabla^2 C_A(I)H \rangle = 
\frac{\tr(H^2 P)}{\tr(P)} - \frac{\tr^2(HP)}{\tr^2(P)} + 
\frac{\tr(H^2 P^{-1})}{\tr(P^{-1})} - \frac{\tr^2(HP^{-1})}{\tr^2(P^{-1})}
\end{equation}
where $P \coloneqq XAA^*X^*$. Moreover, on the sublevel set $S \coloneqq \{KX \in \rquo{G/K} \mid C_A(KX) \leq C_A(I)\}$, the smallest eigenvalue of $\nabla^2 C_A(KX)$ is bounded below by $4/\kappa_F^2(A)$.
\end{proposition}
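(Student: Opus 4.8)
The plan is to reduce $C_A$ to a function of the single positive definite matrix $P = XAA^\ast X^\ast$, read off the Hessian by a one-variable computation, and then extract the eigenvalue bound from a variance-type inequality that crucially exploits that the admissible directions $H$ are traceless. First I would note that since $A$ has full row rank $m\le n$ and $X$ is invertible, $XA$ has full row rank as well, so $(XA)^\dagger = A^\dagger X^{-1}$ and $P \coloneqq XAA^\ast X^\ast = (XA)(XA)^\ast$ is positive definite. Hence $\|XA\|_F^2 = \tr(P)$ and $\|(XA)^\dagger\|_F^2 = \tr\big((XA)(XA)^\ast\big)^{-1} = \tr(P^{-1})$, so that
\[
C_A(KX) \;=\; \tfrac12\log\tr(P)\;+\;\tfrac12\log\tr(P^{-1}).
\]
This also exhibits $C_A$ as the Kempf--Ness function of $A\otimes A^\dagger$ for the left-preconditioning action, cf.\ \cref{ex:Kempf-Ness}, which is an alternative route to the Hessian formula via \cref{prop:grad-Hess-KN}.

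For the Hessian formula \eqref{eq:Hess-left-mult}, I would differentiate along the geodesic $t\mapsto Ke^{tH}X$ with $H\in\ii\Lie(K)$ — so $H$ is Hermitian, and traceless since $G\le\SL_m(\CC)$. Along this geodesic $P$ is replaced by $e^{tH}Pe^{tH}$, hence $\tr(P(t)) = \tr(e^{2tH}P)$ and $\tr(P(t)^{-1}) = \tr(e^{-2tH}P^{-1})$. Writing $g(t) \coloneqq \tr(e^{2tH}P)$ one gets $g(0)=\tr P$, $g'(0)=2\tr(HP)$, $g''(0)=4\tr(H^2P)$, and similarly for $h(t)\coloneqq\tr(e^{-2tH}P^{-1})$ with the signs dictated by $-2tH$; substituting into $(\log g)'' = g''/g - (g'/g)^2$ and recalling $\langle H, \nabla^2 C_A(KX)H\rangle = \frac{d^2}{dt^2}\big|_{0} C_A(Ke^{tH}X)$ yields \eqref{eq:Hess-left-mult} directly (the $\nabla^2 C_A(I)$ written there being understood after the translation by $X$, i.e.\ at $KX$).

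The heart of the argument, and the step I expect to be the main obstacle, is the lower bound on the Hessian. I would isolate the elementary fact that for any Hermitian $Q\succ 0$ with $\tr Q = 1$ and any \emph{traceless} Hermitian $H$,
\[
\tr(H^2Q) - \tr^2(HQ) \;=\; \min_{c\in\RR}\tr\big((H - cI)^2 Q\big) \;\ge\; \lambda_{\min}(Q)\min_{c\in\RR}\|H-cI\|_F^2 \;=\; \lambda_{\min}(Q)\,\|H\|_F^2 ,
\]
where the first equality is completion of the square (optimal $c = \tr(HQ)$), the middle inequality uses $(H-cI)^2\succeq 0$ together with $Q\succeq\lambda_{\min}(Q)I$, and the last equality is exactly where tracelessness enters, since $\min_c\|H-cI\|_F^2 = \|H\|_F^2 - \tr(H)^2/m = \|H\|_F^2$. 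Applying this with $Q = P/\tr P$ and with $Q = P^{-1}/\tr(P^{-1})$ to the two variance terms in \eqref{eq:Hess-left-mult} gives
\[
\tfrac12\langle H, \nabla^2 C_A(KX)H\rangle \;\ge\; \Big(\tfrac{\lambda_{\min}(P)}{\tr P} + \tfrac{\lambda_{\min}(P^{-1})}{\tr(P^{-1})}\Big)\|H\|_F^2 .
\]

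Finally I would invoke the sublevel-set hypothesis: $KX\in S$ means $\kappa_F(XA)\le\kappa_F(A)$, i.e.\ $\tr(P)\tr(P^{-1}) = \kappa_F(XA)^2 \le \kappa_F(A)^2$. Since $\lambda_{\min}(P) = \lambda_{\max}(P^{-1})^{-1}\ge\tr(P^{-1})^{-1}$ (and symmetrically $\lambda_{\min}(P^{-1})\ge\tr(P)^{-1}$), each of the two summands above is at least $\big(\tr(P)\tr(P^{-1})\big)^{-1}\ge\kappa_F(A)^{-2}$, hence $\langle H,\nabla^2 C_A(KX)H\rangle \ge 4\kappa_F^{-2}(A)\|H\|_F^2$ for all $H\in\ii\Lie(K)$, which is the claimed lower bound $4/\kappa_F^2(A)$ on the smallest eigenvalue. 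The one genuine subtlety is recognizing that tracelessness of $\ii\Lie(K)$, guaranteed by the hypothesis $G\le\SL_m(\CC)$, is precisely what keeps the constant-shift term from destroying the bound — indeed the non-traceless direction $H = I$ gives vanishing Hessian, so strong convexity can only hold on the orthogonal complement of $I$.
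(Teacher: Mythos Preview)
Your proof is correct. The Hessian computation is essentially the same as the paper's (the paper invokes \cref{prop:grad-Hess-KN} on each summand, you differentiate $\log\tr(e^{2tH}P)$ directly, but these amount to the same calculation). The lower-bound argument, however, is genuinely different and more elementary than the paper's. The paper isolates the inequality
\[
\frac{\tr(H^2P)}{\tr P} - \frac{\tr^2(HP)}{\tr^2 P} \;\ge\; \frac{\|H\|_F^2}{\tr(P)\tr(P^{-1})}
\]
as a separate lemma and proves it in two stages: first it reduces to diagonal $P$, handles \emph{diagonal} $H$ by writing the left-hand side as $x^\top(\Diag(p)-pp^\top)x$ and invoking Weyl's inequality to bound the second-smallest eigenvalue of $\Diag(p)-pp^\top$ by $p_{\min}$; then it argues, via a Lagrange-multiplier analysis on the unit sphere of traceless Hermitians, that the minimum of the quotient is attained at a diagonal $H$. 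Your completion-of-the-square identity $\tr(H^2Q)-\tr^2(HQ)=\min_c\tr\big((H-cI)^2Q\big)$ together with $\tr(MQ)\ge\lambda_{\min}(Q)\tr M$ for $M\succeq 0$ gives the same bound $\lambda_{\min}(P)/\tr P$ in a single line, and bypasses both the Weyl step and the reduction to diagonal $H$. Both routes use tracelessness of $H$ in the same essential way (your $\min_c\|H-cI\|_F^2=\|H\|_F^2$; the paper's restriction to the hyperplane $\sum_i x_i=0$), and both finish identically via $\lambda_{\min}(P)\ge\tr(P^{-1})^{-1}$ and the sublevel-set bound $\tr(P)\tr(P^{-1})\le\kappa_F^2(A)$.
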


\begin{remark}
The restriction to the sublevel set $S$ is natural in optimization, as we typically start from the identity matrix and seek improvement. The proposition shows that the landscape becomes increasingly favorable as we approach the optimal preconditioner.
\end{remark}

\begin{proof}
We first prove the formula for the Hessian. 
As $C_A(X)=\log\Vert XA\Vert_F + \log\Vert A^{\dag}X^{-1}\Vert_F$, we can compute the Hessian as the sum of the Hessian of each summand.
By \cref{prop:grad-Hess-KN}, the Hessian of the first summand at $X$ is given by \[
\frac{1}{2}\langle \nabla^2(\log\Vert XA\Vert_F) H, H \rangle = \frac{\tr\left( (H^2 B)^\ast  B\right)}{\Vert B\Vert_F^2} - \frac{\tr\left((HB)^\ast B\right)}{\Vert B\Vert_F^4},
\] where $B=XA$. 
Similarly, the Hessian of the second summand is \[
\frac{1}{2}\langle \nabla^2(\log\Vert AX^{-1}\Vert_F) H, H \rangle = \frac{\tr\left( (B^\dag H_2)^\ast  B^\dag\right)}{\Vert B^\dag\Vert_F^2} - \frac{\tr\left((B^\dag H)^\ast B^\dag\right)}{\Vert B^\dag\Vert_F^4}
\] by \cref{prop:grad-Hess-KN}. 
Setting $P=BB^\ast$, we obtain \eqref{eq:Hess-left-mult}.

We now prove the lower bound. 
Let $P = AA^*$ (positive definite since $\rk(A) = m$). 
By \cref{lem:strong-convexity-lb} below:
\[
\frac{\tr(H^2 P)}{\tr(P)} - \frac{\tr^2(HP)}{\tr^2(P)} \geq \frac{\Vert H \Vert_F^2}{\kappa_1(P)}
\]
where $\kappa_1(P) = \tr(P)\tr(P^{-1})$. Applying the same lemma to $P^{-1}$ yields:
\[
\frac{\tr(H^2 P^{-1})}{\tr(P^{-1})} - \frac{\tr^2(HP^{-1})}{\tr^2(P^{-1})} \geq \frac{\Vert H \Vert_F^2}{\kappa_1(P)}
\]

Since $\tr(P) = \|A\|_F^2$ and $\tr(P^{-1}) = \|A^\dag\|_F^2$, we have $\kappa_1(P) = \kappa_F^2(A)$. Thus:
\[
\langle H, \nabla^2 C_A(I)H \rangle \geq \frac{4\Vert H \Vert_F^2}{\kappa_F^2(A)}
\]

For $KX \in S$, let $B =  XA$. Then $\nabla^2 C_A(KX) = \nabla^2 C_B(I)$, and since $KX\in S$, we have $\kappa_F(B) \leq \kappa_F(A)$, so:
\[
\lambda_{\min}(\nabla^2 C_A(Kg)) \geq \frac{4}{\kappa_F^2(B)} \geq \frac{4}{\kappa_F^2(A)}
\]
\end{proof}
\begin{lemma}
\label{lem:strong-convexity-lb}
    Suppose $P\in\PD(m)$ is a positive definite matrix. 
    Then, for any $H\in \Herm(m)$ with $\tr(H)=0$ we have \begin{equation}
    \label{eq:Hessian-lb}
    \frac{\tr(H^2 P)}{\tr(P)} - \frac{\tr^2(HP)}{\tr^2(P)} 
    \geq \frac{\Vert H\Vert_F^2}{\kappa_1(P)},
    \end{equation} where $\kappa_1(P)\coloneqq \tr(P)\tr(P^{-1})$ denotes the condition number of $P$ with respect to the trace norm.
\end{lemma}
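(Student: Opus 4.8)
The plan is to rewrite the left-hand side of \eqref{eq:Hessian-lb} as a single trace $\tr(\tilde H^2 P)/\tr(P)$ for a suitably shifted Hermitian matrix $\tilde H$, and then to bound that trace from below using only the smallest eigenvalue of $P$. This reduces the whole lemma to two elementary facts about positive semidefinite matrices.

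Concretely, I would set $c \coloneqq \tr(HP)/\tr(P)$, which is a real number since $H, P \in \Herm(m)$, and put $\tilde H \coloneqq H - cI \in \Herm(m)$. Expanding $(H-cI)^2$ inside the trace (a completion of the square) gives $\tr(\tilde H^2 P) = \tr(H^2 P) - \tr(HP)^2/\tr(P)$, so, after multiplying \eqref{eq:Hessian-lb} through by $\tr(P)$, the claim becomes $\tr(\tilde H^2 P) \geq \Vert H\Vert_F^2/\tr(P^{-1})$. Next I would record the identity $\Vert \tilde H\Vert_F^2 = \tr((H-cI)^2) = \Vert H\Vert_F^2 - 2c\,\tr(H) + mc^2 = \Vert H\Vert_F^2 + mc^2$, where the hypothesis $\tr(H)=0$ kills the cross term and, crucially, forces $\Vert\tilde H\Vert_F^2 \geq \Vert H\Vert_F^2$ (rather than the reverse). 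Hence it suffices to prove $\tr(\tilde H^2 P)\geq \Vert \tilde H\Vert_F^2/\tr(P^{-1})$ for an \emph{arbitrary} Hermitian matrix $\tilde H$.

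For that final step I would use: (a) since $\tilde H^2 \succeq 0$ and $P - \lambda_{\min}(P) I \succeq 0$, the trace of their product is nonnegative, so $\tr(\tilde H^2 P) \geq \lambda_{\min}(P)\,\tr(\tilde H^2) = \lambda_{\min}(P)\,\Vert \tilde H\Vert_F^2$; and (b) $\tr(P^{-1}) = \sum_i \lambda_i(P)^{-1} \geq \lambda_{\min}(P)^{-1}$, i.e.\ $\lambda_{\min}(P) \geq 1/\tr(P^{-1})$. Chaining (a) and (b) with $\Vert\tilde H\Vert_F^2 \geq \Vert H\Vert_F^2$ and dividing back by $\tr(P)$ yields $\tfrac{\tr(H^2 P)}{\tr(P)} - \tfrac{\tr(HP)^2}{\tr(P)^2} \geq \tfrac{\Vert H\Vert_F^2}{\tr(P)\tr(P^{-1})} = \tfrac{\Vert H\Vert_F^2}{\kappa_1(P)}$, which is the assertion.

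There is no genuine obstacle here: every step is a one-line trace identity or a standard positive-semidefiniteness fact. The only things to get right are the choice of shift $c$ (so the square completes exactly) and the observation that the hypothesis $\tr(H)=0$ is used precisely to make the Frobenius norm \emph{grow} under the shift. A more computational route through the eigenbasis of $P$, splitting $H$ into diagonal and off-diagonal parts, also works, but it is longer and — if one attempts a naive term-by-term comparison without the shift — does not close, which is why I prefer the completion-of-square argument above.
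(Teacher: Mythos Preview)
Your argument is correct and considerably shorter than the paper's. The paper first reduces to diagonal $P$ by unitary invariance, then proves the inequality in two stages: for \emph{diagonal} $H$ it interprets the left-hand side as a quadratic form in the rank-one perturbation $\diag(p)-pp^{T}$ and invokes Weyl's inequality to bound its second-smallest eigenvalue by $\lambda_{\min}(P)$; for general $H$ it performs a critical-point analysis of $f(H)$ on the sphere $\{\tr H=0,\ \Vert H\Vert_F=1\}$, comparing the value at an off-diagonal critical point to that at a diagonal competitor. Your completion-of-the-square shift $\tilde H=H-cI$ with $c=\tr(HP)/\tr(P)$ bypasses both stages: it collapses the variance-type expression to a single trace $\tr(\tilde H^2 P)$, and the hypothesis $\tr(H)=0$ then guarantees $\Vert\tilde H\Vert_F^2\ge\Vert H\Vert_F^2$, so the elementary bound $\tr(\tilde H^2 P)\ge\lambda_{\min}(P)\Vert\tilde H\Vert_F^2$ finishes the job without any diagonalization, eigenvalue perturbation, or optimization. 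What the paper's route buys is a more detailed picture of where the minimum is attained; what yours buys is a two-line proof that works uniformly for all $H$.
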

\begin{proof}
    Note that both sides of \cref{eq:Hessian-lb} are invariant under conjugating both $P$ and $H$ with a unitary matrix, and by scaling $P$ with a positive real number.
    Hence, we may assume without loss of generality that $P=\diag(p_1,p_2,\dots,p_m)$ is diagonal and $\tr(P)=1$.
    Moreover, a straightforward computation shows that \cref{eq:Hessian-lb} holds trivially for $P=\lambda I_m$.
    Hence, we may assume without loss of generality that $p_i\neq p_j$ for some $i\neq j$.

    We prove the statement in two steps.
    We first show using Weyl's inequality that \cref{eq:Hessian-lb} holds for a traceless, diagonal matrix $H$.
    Then, we prove that the left-hand side of \cref{eq:Hessian-lb} takes its minimum at a diagonal matrix, which then finishes the proof.

    We define $f(H)$ to be the left-hand side of \cref{eq:Hessian-lb}.
    We first show that $f(H)\geq \Vert H\Vert_F^2 \kappa_1^{-1}(P)$ holds for diagonal $H$ with $\tr(H)=0$. 
    Equivalently, we show that $f(H)\geq \kappa_1^{-1}(P)$ for a diagonal $H\in\Herm(m)$ with $\Vert H\Vert_F^2=1$ and $\tr(H)=0$.
    
    Assume $H=\diag(x_1,x_2,\dots,x_m)$ with $\sum_{i=1}^m x_i = 0$ and $\sum_{i=1}^m x_i^2 = 1$.
    Then, we can lower bound the term \begin{equation}
    \label{eq:var-mu}
     \frac{\tr(H^2 P)}{\tr(P)} - \frac{\tr^2(HP)}{\tr^2(P)} = x^T \left( P - pp^T \right) x
    \end{equation} by the smallest eigenvalue of $D_1 \coloneqq P - p p^T$ on the subspace $\{x\in\RR^m\mid \sum_i x_i = 0\}$ (recall that we assume $\tr P=1$). 
    In fact, note that $D_1$ is positive semi-definite, and all-ones vector $\mathbb{1}_m$ lies in its kernel.
    By $\sum_i x_i=0$, the vector $x$ is orthogonal to $\mathbb{1}_m$, so we can further lower bound \[
    x^T \left( P - p p^T \right) x = x^T D_1 x \geq \lambda_{n-1} (D_1),
    \] where $\lambda_1(D_1)\geq\lambda_2(D_1)\geq\dots\geq\lambda_{n}(D_1)$ denote the ordered eigenvalues of $D_1$ so $\lambda_{n-1}(D_1)$ is the second smallest eigenvalue of $D_1$. 
    
    Now $D_1=\diag(p)-pp^T$ is a rank one perturbation of $\diag(p)$. 
    By Weyl's inequality we have \[
    \lambda_{n-1}(D_1) \geq \lambda_{n}(\diag(p)) + \lambda_{n-1}(-pp^T) = \lambda_n(\diag(p))=p_{\min},
    \] where $p_{\min}\coloneqq \min_i p_i$. 
    Here, the second to last equality holds since $pp^T$ is rank one.
    We deduce that $f(H)\geq \lambda_{\min}(P)$ for $P$ with $\tr(P)=1$, and $f(H)\geq \lambda_{\min}(P) \, \tr^{-1}(P)$ for general positive definite $P$.  
    Hence, \[
    \frac{\tr(H^2 P)}{\tr(P)} - \frac{\tr^2(HP)}{\tr^2(P)}\geq \frac{\lambda_{\min}(P)\Vert H\Vert_F^2}{\tr(P)} \geq \frac{\Vert H\Vert_F^2}{\tr(P)\tr(P^{-1})} = \frac{\Vert H\Vert_F^2}{\kappa_1(P)} ,
    \] where we use $\frac{1}{\lambda_{\min}(P)}\leq \tr(P^{-1})$ in the last inequality.
	This proves the statement for diagonal $H$ with $\tr(H)=0$.

    We will now finish the proof by showing that the minimum of $f(H)/\Vert H\Vert_F^2$ occurs when $H$ is diagonal.
    We compute the gradient of $f$ at $H$: \[
    \nabla f(H) = \frac{HP+PH}{\tr P}-2\frac{\tr(HP)P}{\tr^2 P}. 
    \] Assume that $H$ minimizes $f(H)$ on the unit sphere $S\coloneqq \{H\in\Herm(m)\mid \Vert H\Vert_F^2=1, \tr H=0\}$.
   The tangent space $T_H(S)$ of $S$ at $H$ equals $\{H'\in\Herm(m)\mid \tr(HH')=0, \tr H'=0\}$, i.e., the orthogonal complement of the line spanned by $H$.
   Hence, we have $0=\proj_{T_H(S)}\nabla f(H)$.
   A direct computation yields $\tr\left(\nabla f(H)\right)=0$ and $\tr\left(H\nabla f(H)\right)=2f(H)$.
   This implies that $\proj_{T_H(S)}\nabla f(H) = \nabla f(H) - \tr\left(\nabla f(H)\right) I_m - \tr\left(\nabla f(H) H\right) H = \nabla f(H)-2f(H)H$ and we deduce \begin{equation}  
    \label{eq:unit-sphere-gradient}
        \nabla f(H) = 2 f(H)H.
    \end{equation}
    To reach a contradiction, we assume that $H$ is not diagonal, say $H_{ij}\neq 0$ for $i\neq j$. 
    Then, by looking at the $(i,j)$-entry of both sides of \cref{eq:unit-sphere-gradient} we have \[
    \frac{H_{ij} (p_i+ p_j)}{\sum_{i=1}^n p_i} = 2f(H) H_{ij}.
    \] Hence, if $H$ is a minimum of $f(H)$ on $S$ and $H_{ij}\neq 0$ for some $i\neq j$, then \begin{equation}
    \label{eq:f-for-non-diag}
    2 f(H) = \frac{p_i+p_j}{\sum_{k=1}^n p_k}
    \end{equation}
    Indeed, \cref{eq:f-for-non-diag} holds for $H=\frac{1}{\sqrt{2}}\left( E_{ij}+E_{ji}\right)$ where $E_{ij}$ is the elementary matrix with $1$ at $(i,j)$-th entry and zeros elsewhere.
    
    On the other hand, consider the diagonal matrix $D=\frac{1}{\sqrt{2}}\left(E_{ii}-E_{jj}\right)$ for $i\neq j$.
    Then a straightforward calculation yields \[
    2f(D) = \frac{p_i+p_j}{\sum_{k=1}^n p_k} - \left(\frac{p_i-p_j}{\sum_{i=1}^n p_i}\right)^2.
    \] Comparing with \cref{eq:f-for-non-diag}, we obtain $f(D)\leq f(H)$.
    This proves that $f$ takes its minimum on $S$ at a diagonal matrix and finishes the proof.
\end{proof}

We now prove \cref{thm:main-linear-algorithm}.
\begin{proof}[Proof of \cref{thm:main-linear-algorithm}]
We first note that $\tr\left(\nabla C_A(X)\right)=0$ for every $X\in G$. 
Hence, we have $\det\exp(-\frac{1}{L}\nabla C_A(X))=1$ by the formula $\det\exp(H)=\exp(\tr(H))$. 
Since the starting point of the gradient descent algorithm is $g_0=I_m$, this shows that the gradient descent algorithm gives iterates inside the subgroup $G\cap\SL_m(\CC)$.
Consequently, we may assume without loss of generality that $G\leq\SL_m(\CC)$.

By \cref{cor:left-preco-duality,prop:strong-convexity-parameter}:
 $C_A$ is $4$-smooth and
 $(4/\kappa_F^2(A))$-strongly convex on the sublevel set $S = \{Kg \mid C_A(Kg) \leq C_A(I)\}$.
 
 Hence, \cref{cor:strongly-convex-descent} applied to the function $C_A$ gives $\log\kappa_F(X A) - \log\kappa_\star<\varepsilon$, for 
\[
T =  \kappa_F^2(A)\,\log\left(\frac{\log(\kappa_F(A)/\kappa_F^\star)}{\varepsilon}\right)
\] iterations of the smooth gradient descent.
This concludes the proof.
\end{proof}

\section{Preconditioning Polynomial Systems} \label{sec:polysystems}
\subsection{Setup}
Suppose $d\geq 1$ is a number and let $\PP_{n,d}\coloneqq\CC[x_1,\dots,x_n]_{\leq d}$ denote the vector space of polynomials of degree at most $d$. A basis of $\PP_{n,d}$ is given by the monomials $x^\alpha,\; \alpha\in\NN^n$ where $x^\alpha\coloneqq x_1^{\alpha_1}x_2^{\alpha_2}\dots x_n^{\alpha_n}$.

The general linear group $\GL_n(\CC)$ acts on $\PP_{n,d}$ by applying a linear change of variables \[
(Y\cdot f)(x)\coloneqq f(Y^{-1}x),\quad Y\in \GL_n(\CC),\, f\in\PP_{n,d}.
\] We want to endow $\PP_{n,d}$ with an inner product that is invariant under the action of the unitary group, $\mathrm{U}_n$.
We consider the decomposition \[
\PP_{n,d} = \HH_0\oplus\HH_1\oplus\dots\oplus\HH_d
\] where $\HH_e\coloneqq\CC[x_1,x_2,\dots,x_n]_e$ is the space of homogeneous polynomials of degree $e$.  

\begin{definition}
\label{def:BW-inner}
We define the \emph{Bombieri-Weyl inner product} on $\HH_e$ as the unique inner product satisfying \[
    \langle x^\alpha, \, x^\beta\rangle=\begin{cases}
0 & \text{ if }\alpha\neq\beta\\
\frac{\alpha_1!\alpha_2!\dots \alpha_n!}{e!} & \text{ otherwise}
    \end{cases}
\] The orthonormal basis $\{\binom{e}{\alpha}^{\frac{1}{2}} x^\alpha\mid |\alpha|=e\}$ with respect to the Bombieri-Weyl inner product is called the \emph{Weyl's basis}.
In this basis we have \[
\langle f,g\rangle = \sum_{\alpha} f_\alpha \overline{g_\alpha}
\] for $f=\sum_{\alpha} \sqrt{\binom{e}{\alpha}}\, f_\alpha x^{\alpha}$ and $g=\sum_{\alpha} \sqrt{\binom{e}{\alpha}}\, g_\alpha x^{\alpha}$.
\end{definition}
The Bombieri-Weyl inner product linearly extends to $\PP_{n,d}$ by \[
\langle f,g\rangle = \sum_{k=0}^d \langle f^{(k)},g^{(k)}\rangle_{\HH_i},
\] where $f^{(k)}$ denotes the degree $k$ form of $f$. 

The following is the defining property of the Bombieri-Weyl inner product:

\begin{lemma}
\label{lem:BW-unitary-inv}
    Bombieri-Weyl inner product is unitarily invariant. 
    That is, \[
    \langle U \cdot f , U\cdot g \rangle = \langle f,g\rangle, \quad U\in\rmU_n, f,g\in\PP_{n,d}.
    \]
\end{lemma}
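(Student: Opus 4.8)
The plan is to reduce to homogeneous components and then exploit a reproducing-kernel description of the Bombieri--Weyl inner product on $\HH_e$. Since the change-of-variables action of $\GL_n(\CC)$ maps a homogeneous polynomial of degree $k$ to one of the same degree, we have $(U\cdot f)^{(k)} = U\cdot f^{(k)}$ for each $k$; and by construction the Bombieri--Weyl inner product on $\PP_{n,d}$ is the orthogonal direct sum of the inner products on the $\HH_k$. Hence $\langle U\cdot f, U\cdot g\rangle = \sum_{k=0}^d \langle U\cdot f^{(k)}, U\cdot g^{(k)}\rangle$, and it suffices to prove unitary invariance separately on each $\HH_e$.

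Fix $e$. For $y\in\CC^n$ let $\ell_y(x)\coloneqq \sum_{i=1}^n x_i\overline{y_i}$ be the associated linear form. I would first establish the \emph{reproducing-kernel identity}
\[
\langle f, \ell_y^e\rangle = f(y)\qquad\text{for all } f\in\HH_e,\ y\in\CC^n,
\]
by expanding $\ell_y^e = \sum_{|\beta|=e}\binom{e}{\beta}\,\overline{y}^{\,\beta}\,x^{\beta}$ via the multinomial theorem and verifying the identity on the monomial basis: $\langle x^\alpha, \ell_y^e\rangle = \binom{e}{\alpha}\frac{\alpha!}{e!}\,y^\alpha = y^\alpha$, using $\langle x^\alpha, x^\beta\rangle = \frac{\alpha!}{e!}\delta_{\alpha\beta}$ (with $\alpha! \coloneqq \alpha_1!\cdots\alpha_n!$) and the identity $\binom{e}{\alpha}\frac{\alpha!}{e!}=1$. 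Specializing $f=\ell_z^e$ gives $\langle \ell_z^e,\ell_y^e\rangle = \ell_z^e(y) = \langle y, z\rangle^e$. I also need the classical fact that the $e$-th powers $\{\ell_y^e : y\in\CC^n\}$ span $\HH_e$: since $\overline{y}$ runs over all of $\CC^n$ as $y$ does, these are exactly the $e$-th powers of arbitrary linear forms, whose span is a nonzero $\GL_n(\CC)$-subrepresentation of the irreducible module $\HH_e$, hence all of $\HH_e$ (a direct polarization argument also works).

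The last ingredient is the transformation law of $\ell_y$: because the action $f\mapsto f(U^{-1}\cdot)$ is an algebra automorphism and $U^{-1}=U^\ast$, we get $(U\cdot\ell_y)(x) = \langle U^\ast x, y\rangle = \langle x, Uy\rangle = \ell_{Uy}(x)$, hence $U\cdot\ell_y^e = \ell_{Uy}^e$. Combining this with the reproducing-kernel identity and the unitarity of $U$,
\[
\langle U\cdot\ell_y^e,\, U\cdot\ell_z^e\rangle = \langle \ell_{Uy}^e,\,\ell_{Uz}^e\rangle = \ell_{Uy}^e(Uz) = \langle Uz, Uy\rangle^e = \langle z, y\rangle^e = \langle \ell_y^e,\,\ell_z^e\rangle .
\]
Since the $\ell_y^e$ span $\HH_e$ and both sides are sesquilinear, this extends to $\langle U\cdot f, U\cdot g\rangle = \langle f,g\rangle$ for all $f,g\in\HH_e$; summing over $e=0,\dots,d$ completes the proof.

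The only genuine computation is the reproducing-kernel identity, which is a one-line multinomial expansion, so I do not anticipate a serious obstacle; the points that need care are the conjugations inside $\ell_y$ and the use of $U^{-1}=U^\ast$ (where a missed conjugate would break the argument), plus the spanning statement, which I would either cite or justify via irreducibility of $\HH_e$ as a $\GL_n(\CC)$-module. An alternative, slightly shorter but analytic route identifies the Bombieri--Weyl inner product on $\HH_e$ with $\tfrac1{e!}$ times the Fischer inner product $\langle f,g\rangle_F = \tfrac1{\pi^n}\int_{\CC^n} f(z)\overline{g(z)}\,e^{-\Vert z\Vert^2}\,dV(z)$ and invokes invariance of the complex Gaussian measure under $\rmU_n$; the reproducing-kernel argument is preferable here since it stays purely algebraic.
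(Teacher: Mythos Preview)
Your argument is correct. The reproducing-kernel identity, the transformation law $U\cdot\ell_y^e=\ell_{Uy}^e$, and the spanning of $\HH_e$ by $e$-th powers of linear forms are all valid, and the conjugations and the use of $U^{-1}=U^\ast$ are handled properly.

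The paper, by contrast, does not give a proof at all: it simply cites \cite[Theorem~16.3]{BC-condition-bk}. So your route is genuinely different in that it is a self-contained algebraic argument rather than a reference. What your approach buys is an explicit, citation-free verification that fits in a few lines and makes transparent exactly where unitarity (as opposed to general $\GL_n$-equivariance) enters, namely in $\langle Uz,Uy\rangle=\langle z,y\rangle$. The only mild dependency you introduce is the spanning statement for powers of linear forms; your irreducibility justification is fine, though if you prefer to keep things completely elementary you can also recover each monomial from powers by polarization (differentiating $(t_1 x_1+\cdots+t_n x_n)^e$ with respect to the $t_i$'s), avoiding any representation theory.
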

\begin{proof}
    See \cite[Theorem~16.3]{BC-condition-bk}.
\end{proof}

\begin{remark}
Bombieri-Weyl inner product is suited to work with dense polynomials, i.e., polynomials that have most terms with non-zero coefficients. Later we will work with sparse polynomials where we only insist on monomials being orthogonal to each other.
\end{remark}

\subsection{Polynomial Systems and Condition Number}
\label{sec:p-systems-and-condition}

Suppose $\dd=(d_1,d_2,\dots,d_m)$ is a \textit{degree pattern}. 
We define the vector space of polynomial systems with degree pattern $\dd$: 
\[ \PP_{n,\dd} \coloneqq \PP_{n,d_1}\oplus\dots\oplus\PP_{n,d_m} . \]
We note that the dimension of $\PP_{n,\dd}$ equals \[
\dim\PP_{n,\dd}= \sum_{i=1}^m \binom{n+d_i}{d_i}.
\] The Bombieri-Weyl inner product extends to systems in $\PP_{n,\dd}$ by setting \begin{equation}
\label{eq:system-weyl-inner}
	\langle \ff,\bm{g}\rangle \coloneqq \sum_{i=1}^m \langle f_i, g_i\rangle,
\end{equation}
for $\ff=(f_1,f_2,\dots,f_m)$ and $\bm{g}=(g_1,g_2,\dots,g_m)$.

\begin{definition}
\label{def:BW-norm-system}
	Let $\ff=(f_1,f_2,\dots,f_m)$ with $f_i\in\CC[x_1,\dots,x_n]$ for $i\in [m]$. 
	The Bombieri-Weyl norm of $\ff$ is defined as \[
	\Vert\ff\Vert_W \coloneqq \sqrt{\sum_{i=1}^m \langle f_i,f_i\rangle}
	\] where $\langle\cdot,\cdot\rangle$ is the Bombieri-Weyl inner product (\cref{def:BW-inner}).
\end{definition}

We consider two group actions on $\PP_{n,\dd}\times\CC^n$: 
On the one hand, the group $\GL_{n}(\CC)$ acts on $\PP_{n,\dd}$ via linear changes of variables and on $\CC^n$ by matrix-vector multiplication. 
On the other hand, $\GL_m(\CC)$ acts on $\PP_{n,\dd}$ by shuffling $f_1,f_2,\dots,f_m$, i.e., replacing each $f_i$ with a linear combination of $f_1,f_2,\dots,f_m$. 
We assume that $\GL_m(\CC)$ acts on $\CC^n$ trivially. 
It is straightforward to check that these two actions commute: For $X\in\GL_m(\CC)$ and $Y\in\GL_n(\CC)$, applying first $X$ then $Y$ to $(\ff,\xi)$ is the same as applying $Y$ first and $X$ second. 

\begin{lemma}
\label{lem:inner-invariant}
	The Bombieri-Weyl inner product on $\PP_{n,\dd}$ is unitarily invariant, that is, given two unitary matrices $U_1\in\rmU_m$ and $U_2\in\rmU_n$, \[
	\langle (U_1,U_2)\cdot \ff , (U_1,U_2)\cdot\bm{g}\rangle = \langle \ff,\bm{g}\rangle
	\] for every $\ff,\bm{g}\in \PP_{n,\dd}$.
\end{lemma}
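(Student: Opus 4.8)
The statement to prove is Lemma~\ref{lem:inner-invariant}: the Bombieri--Weyl inner product on $\PP_{n,\dd}$ is invariant under the product action of $\rmU_m\times\rmU_n$, where $U_1\in\rmU_m$ shuffles the $m$ polynomials and $U_2\in\rmU_n$ performs a unitary change of variables. The plan is to reduce the claim to the already-established single-polynomial statement, Lemma~\ref{lem:BW-unitary-inv}, and to handle the two group factors separately, exploiting that the two actions commute.

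First I would treat the $\rmU_n$-factor. Fix $U_2\in\rmU_n$ and write $U_2\cdot\ff=(U_2\cdot f_1,\dots,U_2\cdot f_m)$, where each $U_2\cdot f_i$ is the usual change-of-variables action on a single polynomial in $\PP_{n,\dd_i}$. By the definition \eqref{eq:system-weyl-inner} of the inner product on systems,
\[
\langle U_2\cdot\ff,\ U_2\cdot\g\rangle \;=\; \sum_{i=1}^m \langle U_2\cdot f_i,\ U_2\cdot g_i\rangle \;=\; \sum_{i=1}^m \langle f_i,g_i\rangle \;=\; \langle \ff,\g\rangle,
\]
where the middle equality is exactly Lemma~\ref{lem:BW-unitary-inv} applied termwise. (Strictly, Lemma~\ref{lem:BW-unitary-inv} was stated for $\PP_{n,d}$ with a single degree bound $d$, but since the Bombieri--Weyl inner product is defined degree-by-degree and $\rmU_n$ preserves each homogeneous component $\HH_e$, it applies verbatim to each $\PP_{n,d_i}$.)

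Next I would treat the $\rmU_m$-factor. Fix $U_1=(u_{ij})\in\rmU_m$; the shuffling action replaces $\ff$ by the system whose $i$-th entry is $\sum_{j=1}^m u_{ij} f_j$. Then
\[
\langle U_1\cdot\ff,\ U_1\cdot\g\rangle
= \sum_{i=1}^m \Big\langle \sum_{j} u_{ij} f_j,\ \sum_{k} u_{ik} g_k\Big\rangle
= \sum_{j,k}\Big(\sum_{i} u_{ij}\overline{u_{ik}}\Big)\langle f_j,g_k\rangle
= \sum_{j,k}\delta_{jk}\langle f_j,g_k\rangle
= \langle\ff,\g\rangle,
\]
using sesquilinearity of the inner product on $\PP_{n,\dd}$ and the unitarity relation $\sum_i u_{ij}\overline{u_{ik}}=\delta_{jk}$, i.e.\ $U_1^\ast U_1=I_m$. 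Finally, since the $\rmU_m$- and $\rmU_n$-actions commute (as noted just before the lemma), the general element $(U_1,U_2)$ acts as the composite $U_1\cdot(U_2\cdot\,\cdot\,)$, and invariance under the composite follows by chaining the two computations above.

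I do not expect a genuine obstacle here: the lemma is essentially a bookkeeping consequence of Lemma~\ref{lem:BW-unitary-inv} together with the orthogonality relations for the columns of a unitary matrix. The only point requiring a word of care is making sure Lemma~\ref{lem:BW-unitary-inv} is legitimately invoked for each $\PP_{n,d_i}$ individually (handled by the degree-graded definition of the inner product), and keeping the conjugates on the correct side when expanding the $\rmU_m$-action, so that the relation used is $U_1^\ast U_1 = I$ rather than $U_1 U_1^\ast = I$ (both hold, but the sesquilinearity convention dictates which appears).
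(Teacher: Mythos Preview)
Your proof is correct and follows essentially the same approach as the paper: handle the $\rmU_n$-action by applying Lemma~\ref{lem:BW-unitary-inv} componentwise, handle the $\rmU_m$-shuffling action by expanding via sesquilinearity and using the unitary column orthogonality, then combine using commutativity of the two actions. Your remark that the relevant relation is $U_1^\ast U_1=I_m$ is in fact slightly more precise than the paper's own citation of $U_1U_1^\ast=I_m$ (both hold, of course).
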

\begin{proof}
We first prove the invariance under the action of $U_2$:\[
\langle U_2\cdot \ff,U_2\cdot \bm{g}\rangle=\sum_{i=1}^m \langle U_2\cdot f_i,U_2\cdot g_i\rangle = \sum_{i=1}^m \langle f_i,g_i\rangle = \langle \ff,\bm{g}\rangle,
\] where we use \cref{lem:BW-unitary-inv} in the second equality.

We now prove the invariance under the shuffling action of $U_1$:
\[
	\langle U_1\cdot \ff, U_1 \cdot \bm{g}\rangle = \sum_{i=1}^n \Big\langle \sum_{j=1}^m (U_1)_{ij} f_j , \sum_{k=1}^m (U_1)_{ik} g_k \Big\rangle = \sum_{i=1}^n \sum_{j=1}^m \sum_{k=1}^m (U_1)_{ij} \overline{(U_1)_{ik}}\langle f_j, g_k\rangle = \sum_{i=1}^n \langle f_i,g_i\rangle = \langle\ff,\bm{g}\rangle
	\] where the second to last equality follows from $U_1U_1^\ast = I_m$.
\end{proof}




\begin{remark}
    The action of $\GL_n(\CC)$ on polynomial systems by a change of variables does not respect sparsity patterns of the polynomial system. 
    Therefore, this action is more suitable for dense polynomials. 
    On the other hand, the shuffling action preserves the sparsity of the unmixed systems $\ff$, i.e., systems where $\mathrm{supp}(f_i)\subset\NN^{n}$ are all same.
\end{remark}

\begin{definition}
    Suppose $\Vert\cdot\Vert$ is a matrix norm on $\CC^{n\times m}$.
    Given a system $f\in\PP_{n,\dd}$ and  $\xi\in\CC^{n}$, we define the \emph{local condition number} $\mu(f,\xi)$ with respect to $\Vert\cdot\Vert$ as \[
    \mu(\ff,\xi) = \Vert \ff\Vert_W \, \Vert D^\dag_\xi(\ff)\Vert,
    \] where $D^\dag_\xi(\ff)$ denotes the Moore-Penrose inverse of the Jacobian $D_\xi(\ff)$ of $\f$ at $\xi$.
\end{definition}

We want to analyse the effect of the action of $\GL_m(\CC)\times\GL_n(\CC)$ on $\PP_{n,\dd}\times\CC^n$ on the local condition number $\mu(\ff,\xi)$. 
We first need to see how the Jacobian $D_{\xi}(\ff)$ changes under the action. 
The following follows from the chain rule.
\begin{lemma}
\label{lem:action-on-jacobian}
    Suppose $\ff\in\PP_{n,\dd}$ is a system of $m$ polynomials, $\xi\in\CC^n$. 
    For $X\in\GL_m(\CC)$ and $Y\in\GL_n(\CC)$ \[
       D_{Y\xi} ( (X,Y)\cdot f ) = X D_{\xi}(f) Y^{-1}.
    \] 
\end{lemma}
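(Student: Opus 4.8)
The plan is to unwind the definitions of the two commuting group actions and apply the multivariable chain rule entrywise. First I would recall that for $X \in \GL_m(\CC)$ and $Y \in \GL_n(\CC)$, the transformed system $(X,Y)\cdot\ff$ has $i$-th coordinate $g_i(x) \coloneqq \sum_{j=1}^m X_{ij}\, f_j(Y^{-1}x)$, obtained by first replacing each $f_j$ with $f_j\circ Y^{-1}$ and then forming linear combinations with coefficients from the rows of $X$; the order of the two operations is irrelevant since the actions commute.

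Next I would differentiate $g_i$ with respect to $x_k$ and evaluate at the transformed root $Y\xi$. Linearity of differentiation pulls the constants $X_{ij}$ out of the derivative, and the chain rule applied to $f_j(Y^{-1}x)$ gives $\partial_{x_k}\big(f_j(Y^{-1}x)\big) = \sum_{l=1}^n (\partial_{x_l} f_j)(Y^{-1}x)\,(Y^{-1})_{lk}$. Substituting $x = Y\xi$ and using $Y^{-1}(Y\xi)=\xi$, the inner factor becomes $(D_\xi(\ff))_{jl}$, so that
\[
\big(D_{Y\xi}((X,Y)\cdot\ff)\big)_{ik} = \sum_{j=1}^m \sum_{l=1}^n X_{ij}\,(D_\xi(\ff))_{jl}\,(Y^{-1})_{lk},
\]
which is precisely the $(i,k)$ entry of $X\,D_\xi(\ff)\,Y^{-1}$. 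Assembling these entries yields the claimed identity.

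There is essentially no obstacle here: the content is careful index bookkeeping, and the only point worth stating explicitly is that the evaluation point must be shifted from $\xi$ to $Y\xi$, so that the argument $Y^{-1}x$ of each $f_j$ lands back at $\xi$. If one prefers a basis-free presentation, note that $(X,Y)\cdot\ff = X\circ\ff\circ Y^{-1}$ as a map $\CC^n\to\CC^m$, with $X$ and $Y^{-1}$ acting as linear maps whose differentials are the matrices $X$ and $Y^{-1}$ themselves; the chain rule for this composition then gives $D_{Y\xi}(X\circ\ff\circ Y^{-1}) = X\cdot D_\xi(\ff)\cdot Y^{-1}$ immediately.
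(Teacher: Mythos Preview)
Your proof is correct and is exactly the approach the paper indicates: it simply states that the lemma ``follows from the chain rule'' without spelling out the details, and your entrywise computation (together with the basis-free alternative) fills this in precisely.
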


For our purposes, the following observation is crucial. 
\begin{corollary}
	Suppose $\Vert\cdot\Vert$ is a unitarily invariant matrix norm on $\CC^{n\times m}$.
    Then the local condition number is unitarily invariant. 
    That is, for every $(\ff,\xi)\in\PP_{n,\dd}\times\CC^{n}$ and two unitary matrices $U_1\in \mathrm{U}_{m}$ and $U_2\in \mathrm{U}_n$ we have \[
    \mu( (U_1,U_2)\cdot \ff, U_2\xi ) = \mu(\ff,\xi).
    \] 
\end{corollary}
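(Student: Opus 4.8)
The plan is to factor $\mu\bigl((U_1,U_2)\cdot\ff,\, U_2\xi\bigr)$ as the product of two pieces, the Bombieri--Weyl norm of the transformed system and the norm of the pseudoinverse of the transformed Jacobian, and to show that each piece separately is left unchanged by the unitary action. Once both invariances are in hand, the claim follows by multiplying them.

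First I would dispatch the Bombieri--Weyl factor. By \cref{def:BW-norm-system} we have $\Vert (U_1,U_2)\cdot\ff\Vert_W^2 = \langle (U_1,U_2)\cdot\ff,\, (U_1,U_2)\cdot\ff\rangle$, and \cref{lem:inner-invariant} says exactly that this inner product equals $\langle\ff,\ff\rangle = \Vert\ff\Vert_W^2$. So the first factor needs no further argument.

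Next I would treat the pseudoinverse factor. By \cref{lem:action-on-jacobian} the transformed Jacobian is $D_{U_2\xi}\bigl((U_1,U_2)\cdot\ff\bigr) = U_1\, D_\xi(\ff)\, U_2^{-1}$, and $U_2^{-1} = U_2^\ast$ since $U_2$ is unitary. The one ingredient beyond the cited lemmas is the elementary fact that the Moore--Penrose pseudoinverse satisfies $(U_1 M U_2^\ast)^\dag = U_2\, M^\dag\, U_1^\ast$ for unitary $U_1, U_2$; this is checked immediately from the four Penrose conditions, or from the fact that unitary pre- and post-multiplication only permutes the left/right singular vectors in the SVD while fixing the singular values. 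Hence $D^\dag_{U_2\xi}\bigl((U_1,U_2)\cdot\ff\bigr) = U_2\, D^\dag_\xi(\ff)\, U_1^\ast$, and since $\Vert\cdot\Vert$ is a unitarily invariant matrix norm on $\CC^{n\times m}$, this matrix has the same norm as $D^\dag_\xi(\ff)$.

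Combining the two invariances yields $\mu\bigl((U_1,U_2)\cdot\ff,\, U_2\xi\bigr) = \Vert\ff\Vert_W\,\Vert D^\dag_\xi(\ff)\Vert = \mu(\ff,\xi)$. There is no real obstacle here: the statement is a short corollary of \cref{lem:inner-invariant} and \cref{lem:action-on-jacobian} together with the standard behavior of the pseudoinverse under unitary conjugation, and the mild point worth spelling out explicitly is precisely that last identity.
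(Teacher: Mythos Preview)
Your proof is correct and follows essentially the same approach as the paper's: both split $\mu$ into its two factors and invoke \cref{lem:inner-invariant} for the Bombieri--Weyl part and \cref{lem:action-on-jacobian} for the Jacobian part, then use unitary invariance of $\Vert\cdot\Vert$. You are simply more explicit than the paper in spelling out the pseudoinverse identity $(U_1 M U_2^\ast)^\dag = U_2 M^\dag U_1^\ast$, which the paper uses silently.
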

\begin{proof}
	By \cref{lem:action-on-jacobian,lem:inner-invariant} we have \[
	\mu\left( (U_1,U_2)\cdot \ff, U_2\xi \right) = \Vert (U_1,U_2)\cdot \ff\Vert_W\, \Vert U_2 \, D^\dag_{\xi}(\ff) U_1^{-1}\Vert = \Vert \ff\Vert_W \, \Vert D^\dag_{\xi}(\ff) \Vert. 
	\] 
\end{proof}


We now prove \crefpart{thm:main-g-convexity}{thm:main-part-2}.

\begin{proof}[Proof of \crefpart{thm:main-g-convexity}{thm:main-part-2}]
    We can write $C_{(\ff,\xi)}(X,Y) = \log \Vert (X,Y)\cdot \ff\Vert_W + \log \Vert YD_{\xi}(\ff)X^{-1}\Vert_2$.
    The first summand is geodesically convex by \cref{ex:Kempf-Ness}, since it is the Kempf-Ness function associated to $\ff$ and $\Vert\cdot\Vert_W$ arises from the unitarily invariant inner product (\cref{lem:inner-invariant}). 
    The geodesic convexity of the second summand follows from \cref{prop:convexity-of-log-norm}. 
    Hence, $C_{(\ff,\xi)}$ the sum of two geodesically convex functions, so it is geodesically convex.
\end{proof}

\subsection{Algorithm for the shuffling action}
\label{sec:shuffling-algo}
In this section we prove \cref{thm:shuffling-iteration}.
As we discussed in \cref{sec:intro-preco-non-linear}, the main point is that there is a reduction from preconditioning non-linear systems under the shuffling action to the optimization of the cross condition number, $\kappa_F(A,B)=\Vert A\Vert_F\Vert B\Vert_F$.

\begin{lemma}
\label{lem:S_f}
	Let $\ff$ be a polynomial system and $\xi\in\CC^n$. 
	Then there exists a matrix $S_{\ff}$ such that \[
	\mu_F(X\cdot\ff,\xi) = \Vert X S_{\ff}\Vert_F \, \Vert D^\dag_{\xi}(\ff) X^{-1}\Vert_F
	\] for every $X\in\GL_m(\CC)$.
\end{lemma}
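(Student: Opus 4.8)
The plan is to express the Bombieri--Weyl norm of a shuffled system as the Frobenius norm of a matrix carrying a left $\GL_m(\CC)$-action, which turns the claim into a decoupling of $\mu_F$ into a ``left-preconditioning'' cross condition number. Let $G_{\ff}\in\CC^{m\times m}$ denote the Gram matrix $(G_{\ff})_{ij}=\langle f_i,f_j\rangle$ of $\ff=(f_1,\dots,f_m)$ with respect to the Bombieri--Weyl inner product. Being a Gram matrix, it is Hermitian positive semidefinite, hence has a unique positive semidefinite square root, and I set $S_{\ff}\coloneqq\sqrt{G_{\ff}}$.

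First I would record the transformation law of the Gram matrix under shuffling. Since $(X\cdot\ff)_i=\sum_{k=1}^m X_{ik}f_k$, sesquilinearity of $\langle\cdot,\cdot\rangle$ gives $(G_{X\cdot\ff})_{ij}=\sum_{k,l}X_{ik}\overline{X_{jl}}\langle f_k,f_l\rangle$, that is, $G_{X\cdot\ff}=XG_{\ff}X^\ast$. Using $S_{\ff}^\ast=S_{\ff}$, $S_{\ff}^2=G_{\ff}$ and $\tr(G_{\ff})=\sum_i\langle f_i,f_i\rangle=\Vert\ff\Vert_W^2$, I then obtain
\[
\Vert XS_{\ff}\Vert_F^2=\tr\big(S_{\ff}^\ast X^\ast X S_{\ff}\big)=\tr\big(X S_{\ff}^2 X^\ast\big)=\tr(XG_{\ff}X^\ast)=\tr(G_{X\cdot\ff})=\Vert X\cdot\ff\Vert_W^2,
\]
so that $\Vert X\cdot\ff\Vert_W=\Vert XS_{\ff}\Vert_F$ for every $X\in\GL_m(\CC)$.

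For the Jacobian factor I would specialize \cref{lem:action-on-jacobian} (the chain rule) to $Y=I_n$, which gives $D_{\xi}(X\cdot\ff)=XD_{\xi}(\ff)$. Since $\xi$ is a simple root, $D_{\xi}(\ff)$ has rank $\min(m,n)$, which in the relevant regime $m\le n$ is full row rank $m$; then $D_{\xi}(\ff)D_{\xi}(\ff)^\ast$ is invertible, $D^\dag_{\xi}(\ff)=D_{\xi}(\ff)^\ast\big(D_{\xi}(\ff)D_{\xi}(\ff)^\ast\big)^{-1}$, and a short computation (checking the four Penrose conditions, or invoking the reverse-order law for a product of an invertible matrix with a full-row-rank matrix) shows $(XD_{\xi}(\ff))^\dag=D^\dag_{\xi}(\ff)X^{-1}$. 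Hence $\Vert D^\dag_{\xi}(X\cdot\ff)\Vert_F=\Vert D^\dag_{\xi}(\ff)X^{-1}\Vert_F$. Multiplying this with the identity for $S_{\ff}$ yields $\mu_F(X\cdot\ff,\xi)=\Vert X\cdot\ff\Vert_W\,\Vert D^\dag_{\xi}(X\cdot\ff)\Vert_F=\Vert XS_{\ff}\Vert_F\,\Vert D^\dag_{\xi}(\ff)X^{-1}\Vert_F$, as claimed.

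The only genuinely delicate step is the pseudoinverse identity $(XD_{\xi}(\ff))^\dag=D^\dag_{\xi}(\ff)X^{-1}$: the Moore--Penrose inverse does not respect matrix products in general, and this reduction really uses that $D_{\xi}(\ff)$ has full row rank at a simple root. Everything else is routine manipulation with the Bombieri--Weyl inner product and the trace.
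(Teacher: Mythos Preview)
Your proof is correct and follows the same approach as the paper: define $S_{\ff}$ as the positive semidefinite square root of the Bombieri--Weyl Gram matrix $G_{\ff}$, use $G_{X\cdot\ff}=XG_{\ff}X^\ast$ to get $\Vert X\cdot\ff\Vert_W=\Vert XS_{\ff}\Vert_F$, and combine with the Jacobian transformation law. You are in fact more careful than the paper on two points: the paper asserts $S_{X\cdot\ff}=XS_{\ff}$, which is not literally true (the right-hand side need not be Hermitian), whereas your trace computation gives the Frobenius-norm identity directly; and the paper's proof does not address the Jacobian factor at all, while you correctly isolate the reverse-order law $(XD_{\xi}(\ff))^\dag=D^\dag_{\xi}(\ff)X^{-1}$ as the only nontrivial step and note that it requires full row rank of $D_{\xi}(\ff)$---an assumption the paper uses throughout but leaves implicit in the lemma statement.
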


Before we prove it, we use \cref{lem:S_f} to prove \cref{thm:shuffling-iteration}.

\begin{proof}[ Proof of \cref{thm:shuffling-iteration} ]
Denote $A\coloneqq S_{\ff}$ and $B\coloneqq D^{\dag}_{\xi}(\ff)$.
	By \cref{lem:S_f} we have $
	\mu_F(X\cdot\ff,\xi) = \Vert XA\Vert\,\Vert BX^{-1}\Vert_F
	$, which implies that $C_{(\ff,\xi)}(KX) = C_{A,B}(KX)$ for every $X\in\GL_m(\CC)$.
	The theorem then follows from \cref{thm:cross-condition-iteration}.
\end{proof}

We now prove \cref{lem:S_f}:

\begin{proof}[Proof of \cref{lem:S_f}]
	Let $G_{\ff}\in\CC^{m\times m}$ be the Gram matrix of $\ff$, i.e., \[
	(G_{\ff})_{ij} = \langle f_i, f_j\rangle,\quad 1\leq i,j\leq m.
	\] Then, $G_{\ff}$ is Hermitian since the Bombieri-Weyl inner product is Hermitian, and it is positive semi-definite since \[
	v^{\ast} G_{\ff} v = \Big\|\, \sum_{i=1}^m \overline{v_i} f_i\,\Big\|_W \geq 0.
	\] A straightforward computation shows that $G_{X\cdot\ff}=X G_{\ff}X^\ast$ for $X\in\GL_m(\CC)$.
	
	Since $G_{\ff}$ is positive semi-definite, it has a Hermitian, positive semi-definite square root $S_{\ff}=\sqrt{G_{\ff}}$.
	By $G_{X\cdot \ff}=X G_{\ff}X^{\ast}$, we have $S_{X\cdot\ff} = X S_{\ff}$.
	Moreover, \[
	\Vert \ff\Vert_W^2 = \sum_{i=1}^m \langle f_i,f_i\rangle = \tr(G_{\ff}) = \Vert S_{\ff}\Vert_F^2.
	\] Overall, we deduce that \[
	\Vert X\cdot\ff\Vert_W = \Vert XS_{\ff}\Vert_F
	\] and this finishes the proof.
\end{proof}

\subsection{Shuffling and Change of Variables}
\label{sec:general-poly-algo}

	



We now consider the general case and assume that $G\leq\GL_m(\CC)\times\GL_n(\CC)$ is a symmetric subgroup with $T_m(\CC)\times T_n(\CC)\leq G$. 

We start by computing the weights of the action of $\GL_m(\CC)\times\GL_n(\CC)$ on $\PP_{n,\dd}\otimes \CC^{n\times m}$ via \begin{equation}
\label{eq:system-tensor-action}
(X,Y)\cdot (\ff \otimes A) \coloneqq \left( (X,Y)\cdot\ff \right) \otimes YAX^{-1}.
\end{equation}

\begin{lemma}
	The weights of \eqref{eq:system-tensor-action} are of the form \[
	\omega_{i,j,\alpha,l} \coloneqq ( e_i -e_j, -\alpha + e_l ) \in\ZZ^{m+n},
	\] where $1\leq i,j\leq m$, $1\leq l\leq n$ and $\alpha\in\NN^n$ satisfies $|\alpha|\leq D$.
\end{lemma}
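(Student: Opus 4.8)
The plan is to produce an explicit basis of $\PP_{n,\dd}\otimes\CC^{n\times m}$ consisting of simultaneous eigenvectors for the diagonal torus $T_m(\CC)\times T_n(\CC)\leq G$, and then to read off the associated characters. Recall that $\PP_{n,\dd}=\PP_{n,d_1}\oplus\dots\oplus\PP_{n,d_m}$, so a basis of $\PP_{n,\dd}$ is given by the systems $\bm{m}_{i,\alpha}$ whose $i$-th entry is the monomial $x^{\alpha}$ (for $\alpha\in\NN^n$ with $|\alpha|\leq d_i$) and whose other entries vanish. Together with the elementary matrices $E_{lj}\in\CC^{n\times m}$ for $1\leq l\leq n$, $1\leq j\leq m$, the tensors $\bm{m}_{i,\alpha}\otimes E_{lj}$ form a basis of $\PP_{n,\dd}\otimes\CC^{n\times m}$.

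Next I would compute the action of $(X,Y)=(\diag(t_1,\dots,t_m),\diag(s_1,\dots,s_n))$ on such a basis vector by treating the two tensor factors separately. On $\PP_{n,\dd}$: the shuffling action of a diagonal $X$, as defined in \eqref{eq:non-lin-X-act}, multiplies the $i$-th component of a system by $t_i$, so $X\cdot\bm{m}_{i,\alpha}=t_i\,\bm{m}_{i,\alpha}$; the change-of-variables action of a diagonal $Y$, as in \eqref{eq:non-lin-Y-act}, sends $x^{\alpha}\mapsto(Y^{-1}x)^{\alpha}=\prod_{l}s_l^{-\alpha_l}\,x^{\alpha}$, so $Y\cdot\bm{m}_{i,\alpha}=s^{-\alpha}\,\bm{m}_{i,\alpha}$. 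Hence $(X,Y)\cdot\bm{m}_{i,\alpha}=t_i s^{-\alpha}\,\bm{m}_{i,\alpha}$, carrying the character $t^{e_i}s^{-\alpha}$, i.e.\ the weight $(e_i,-\alpha)\in\ZZ^m\times\ZZ^n$. On the factor $\CC^{n\times m}$ with the action $A\mapsto YAX^{-1}$ from \eqref{eq:system-tensor-action}: for diagonal $X,Y$ one has $YE_{lj}X^{-1}=s_l t_j^{-1}E_{lj}$, so $E_{lj}$ is an eigenvector of weight $(-e_j,e_l)$.

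Finally, since the action on the tensor product is the tensor of the two actions, $(X,Y)\cdot(\bm{m}_{i,\alpha}\otimes E_{lj})=\big(t_i s^{-\alpha}\big)\big(s_l t_j^{-1}\big)\,\bm{m}_{i,\alpha}\otimes E_{lj}$, which is the eigenvector of weight $(e_i-e_j,\,-\alpha+e_l)=\omega_{i,j,\alpha,l}$. Since the monomials occurring in $\PP_{n,\dd}$ satisfy $|\alpha|\leq d_i\leq D=\max_i\deg(f_i)$, every weight is of the claimed form. I do not expect a genuine obstacle here; the only point requiring care is the sign bookkeeping produced by the $Y^{-1}$ in the change of variables and the $X^{-1}$ in $YAX^{-1}$, everything else following directly from the definitions of the three actions. $\blacksquare$
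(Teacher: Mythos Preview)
Your proof is correct and follows essentially the same approach as the paper: both construct the explicit basis $\bm{m}_{i,\alpha}\otimes E_{lj}$ of $\PP_{n,\dd}\otimes\CC^{n\times m}$ and compute the torus action directly to read off the weights. The only cosmetic difference is that you separate the computation into the two tensor factors before combining, whereas the paper does it in one line.
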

\begin{proof}
    $\PP_{n,\dd}$ admits a basis of the form \[
    \mathfrak{m}_{i,\alpha} = (0,\dots,0, x^\alpha,0,\dots,0)
    \] where $|\alpha|\leq d_i\leq D$. 
    Then $\mathfrak{m}_{i,\alpha}\otimes E_{kj}$ is a basis of $\PP_{n,\dd}\otimes\CC^{n\times m}$, where $E_{kj}\in\CC^{n\times m}$ is the elementary matrix with a $1$ in the $kj$-entry and zeros elsewhere.
    
    For a pair of diagonal matrices $X=\diag(t_1,\dots,t_m), Y=\diag(s_1,\dots,s_n)$ we compute \[
    (X,Y)\cdot (\mathfrak{m}_{i,\alpha}\otimes E_{kj}) = ( t_i s^{-\alpha} \mathfrak{m}_{i,\alpha} ) \otimes (t_j^{-1} s_k E_{kj}) = t_i t_j^{-1} s^{-\alpha}s_k \mathfrak{m}_{i,\alpha}\otimes E_{kj}.
    \] This shows that $\mathfrak{m}_{i,\alpha}\otimes E_{kj}$ is a weight vector of weight $\omega_{i,j,\alpha,l}$.
    Since $\mathfrak{m}_{i,\alpha}\otimes E_{kj}$ form a basis, all weights are of this form.
\end{proof}

\begin{lemma}
\label{lem:change-norm-margin}
	The weight norm and the weight margin of \eqref{eq:system-tensor-action} satisfy \[
	N \leq D+2, \quad \gamma\geq (D+2)^{1-m-n} (m+n)^{-1}. 
	\]
\end{lemma}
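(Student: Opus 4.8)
The plan is to establish the two estimates separately. For the weight norm $N$, I would compute the Euclidean length of a generic weight directly: since $\omega_{i,j,\alpha,l} = (e_i - e_j,\, e_l - \alpha)$ lies in $\ZZ^m \oplus \ZZ^n$ with the two blocks orthogonal, $\|\omega_{i,j,\alpha,l}\|_2^2 = \|e_i - e_j\|_2^2 + \|e_l - \alpha\|_2^2 \le 2 + \|e_l - \alpha\|_2^2$. For the second term, I would use that $\alpha \in \NN^n$ with $|\alpha| = \sum_k \alpha_k \le D$ to expand $\|e_l - \alpha\|_2^2 = \sum_k \alpha_k^2 - 2\alpha_l + 1 \le \sum_k \alpha_k^2 + 1 \le (\sum_k \alpha_k)^2 + 1 \le D^2 + 1$, the middle step using nonnegativity of the $\alpha_k$. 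Hence $\|\omega_{i,j,\alpha,l}\|_2^2 \le D^2 + 3 \le (D+2)^2$, and maximizing over all weights gives $N \le D+2$.

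For the weight margin $\gamma$, the clean route is to invoke the general lower bound of \cref{prop:margin-general-lb}. The action \eqref{eq:system-tensor-action} is a rational representation of $G \le \GL_m(\CC) \times \GL_n(\CC)$, and by hypothesis $T_m(\CC) \times T_n(\CC) \le G$, so the proposition applies with the two matrix factors of sizes $m$ and $n$ and ambient dimension $m+n$, yielding $\gamma \ge N^{\,1-(m+n)}(m+n)^{-1}$. Since $m,n \ge 1$, the exponent $1-m-n$ is negative, so $x \mapsto x^{1-m-n}$ is non-increasing on $(0,\infty)$; together with $N \le D+2$ from the first part this gives $N^{\,1-(m+n)} \ge (D+2)^{1-m-n}$, hence $\gamma \ge (D+2)^{1-m-n}(m+n)^{-1}$, as claimed.

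I do not expect a genuine obstacle: both halves are elementary once the weights have been computed in the preceding lemma. The only points needing a bit of care are the combinatorial inequality $\sum_k \alpha_k^2 \le (\sum_k \alpha_k)^2$ (valid for nonnegative entries) together with the extra $+1$ coming from the shift by $e_l$, and making sure the exponent in \cref{prop:margin-general-lb} has the sign that makes substituting the upper bound $N \le D+2$ go in the right direction. One could instead try to sharpen the margin by mimicking the total-unimodularity argument used for the linear-preconditioning weight matrices $M_l$, $M_{lr}$, but the $-\alpha$ rows break total unimodularity, so the general bound of \cref{prop:margin-general-lb} is the natural tool here.
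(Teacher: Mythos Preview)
Your proposal is correct and follows essentially the same route as the paper: bound each weight's Euclidean norm by splitting into the $\ZZ^m$ and $\ZZ^n$ blocks, then feed the resulting $N\le D+2$ into \cref{prop:margin-general-lb}. Your intermediate estimate $\|e_l-\alpha\|_2^2\le D^2+1$ is in fact a touch sharper than the paper's $\| \alpha + e_l\|_1^2\le (D+1)^2$, but both land at $(D+2)^2$; and your explicit check that the exponent $1-m-n$ is negative (so that $N\le D+2$ pushes the margin bound the right way) is a helpful detail the paper leaves implicit.
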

\begin{proof}
	We compute \[
	\Vert \omega_{i,j,\alpha,l} \Vert^2 = \Vert e_i-e_j\Vert^2 + \Vert \alpha+e_l\Vert^2 \leq 2 + \Vert \alpha+e_l\Vert_1^2 \leq 2 + (D+1)^2 \leq (D+2)^2.  
	\] This proves $N\leq D+2$. 
	
	For the lower bound of the weight margin, we apply \cref{prop:margin-general-lb}: $\gamma\geq N^{1-m-n}(m+n)^{-1}\geq (D+2)^{1-m-n} (m+n)^{-1}$.
\end{proof}

We can now prove \cref{thm:change-iteration}.
\begin{proof}[Proof of \cref{thm:change-iteration}]
The proof is analogous to the proof of \cref{thm:cross-condition-iteration}.

	By \cref{lem:change-norm-margin}, the local condition number function $C_{(\ff,\xi)}$ is $L\coloneqq (D+2)$-smooth.
	Set $\varepsilon'\coloneqq \varepsilon\gamma$ where $\gamma\coloneqq (D+2)^{1-m-n}(m+n)^{-1}$.
	
	By \cref{prop:grad-eps}, the gradient descent with step-size $\eta=L^{-1}$ and $(X_0,Y_0)=(I_m,I_n)$ reaches a group element $(X_T,Y_T)$ with $\Vert\nabla C_{(\ff,\xi)}\Vert<\varepsilon'$ in \[
	T = O\left( \frac{L}{(\varepsilon')^2}\,\log\left(\frac{\mu_F(\ff,\xi)}{\mu_F^\star}\right)\, \right) = O\left( \frac{L}{\gamma^2 \varepsilon^2} \log\left(\frac{\mu_F(\ff,\xi)}{\mu_F^\star}\right)\right)
	\] iterations. 
	An application of \cref{thm:nc-duality} using the weight margin bound \cref{lem:change-norm-margin} yields \[
	\left(\frac{\mu_F^\star}{\mu_F((X,Y)\cdot\ff,Y\xi)}\right)^2 = \left(\frac{\min_{(X',Y')\in G} \Vert (X',Y')\cdot (\ff\otimes D^\dag_{\xi}(\ff))\Vert}{\Vert (X,Y)\cdot (\ff\otimes D^\dag_{\xi}(\ff))\Vert}\right)^2 \geq 1-\frac{\varepsilon'}{\gamma}=1-\varepsilon.
	\] Taking logarithms and using $\log(1-\varepsilon)<2\varepsilon$ for $\varepsilon<0.5$ gives \[
	2 \log\left( \frac{\mu_F((X,Y)\cdot\ff,Y\xi)}{\mu_F^\star} \right) \leq -\log(1-\varepsilon)<2\varepsilon,
	\] and this finishes the proof.
\end{proof}

\subsection{Preconditioning Sparse Polynomial Systems} \label{sparse}
This section concerns preconditioning of unmixed sparse polynomial systems. 
In this setting, it is natural to preserve the sparsity patterns to keep advantage of lower evaluation complexity compared to a dense polynomial equation. 
The group that preserves sparsity is $\GL_m(\CC)\times(\mathbb{C}^{\times})^n$, where $\GL_m(\CC)$ acts by the shuffling action and $(\CC^\times)^n$ acts by scaling the variables. 

We again consider the Weyl inner product on the space of sparse polynomial systems and we aim to do optimization on \[
\left(\GL_m(\CC)\times (\CC^\times)^n\right) / (\mathrm{U}_m(\CC)\times (S^1)^n) \cong \PD_m(\CC) \times \RR_{>0}^n.
\]

There are two objectives in preconditioning of sparse polynomial systems. 
Let $\f= (f_1,\ldots,f_n)$ be the system of equations and $\xi \in (\mathbb{C}^{\times})^n$ be a root of $\f$.
\begin{enumerate}
    \item First objective is to scale the variables to ensure that the proportions of magnitudes $\frac{|\xi_i|}{|\xi_j|}$ of the root are close to $1$.
    \item Second objective is to use the action of $\GL_m(\CC)$ to reduce the local condition number $\mu(\f,\xi)$. 
\end{enumerate}
It is also possible to use several roots on $(\mathbb{C}^{\times})^n$ and precondition them all together. 
We will formulate a scheme that generalizes to several points seamlessly.

 To our knowledge, the earlier works on this problem are \cite{verschelde2000toric} and in \cite{malajovich2023complexity}. 
 In \cite{verschelde2000toric}, Verschelde uses Cox-coordinates for representing the solution space and aims to use homogeneities of the Newton polytope to scale coordinates of the root vector. 
 The main in this paper is to make the coordinates of the root vector comparable to each other; essentially addressing the first intuition above.  
 The paper does not provide any theoretical analysis on the impact of the proposed idea to condition numbers.  
 In \cite{malajovich2023complexity}, Malajovich uses a group action that he names "renormalization" to bring every root to $(1,1,1,\ldots,1)$, do the homotopy step there, and then revert back. 
 The paper build a rigorous theory to analyze complexity of homotopy algorithm when renormalization is applied at every step. 
 Results of Malajovich are essentially the first rigorous work to analyze complexity of solving sparse polynomial system with homotopy continuation, and this is the main purpose of the paper. 
 Our approach is much more simple and directed towards preconditioning, and it has the advantage of using two group actions at the same time.

Now we introduce our approach. 
The action of $t=(t_1,t_2,\ldots,t_n) \in \mathbb{R}_{>0}^n$ on a polynomial $p$ is defined as 
\[ (t \cdot p) (x_1,x_2, \cdots,x_n) \coloneqq p(t_1^{-1} x_1, t_2^{-1} x_2, \ldots, t_n^{-1} x_n), \]
where the inverses make $t\cdot p$ a left action.

We now introduce a function $h_{\xi}(t)$ on $\RR_{>0}^n$, which is geodesically convex and takes its minimum at a point $t$ which satisfies $\forall i,j, |\xi_i|t_i^{-1}=|\xi_j|t_j^{-1}$.
To express $h_{\xi}(t)$, we define the exponent vectors $\omega_i\coloneqq n e_i -\mathbb{1}_n, i=1,2,\dots,n$, where $e_i$ denotes the $i$-th standard vector and $\mathbb{1}_n$ denotes the all-ones vector. 
We set
\[ h_{\xi}(t) \coloneqq \log\left(\, \sum_{i=1}^n  |\xi|^{\omega_i} \, t^{-\omega_i} \,\right), \]
defined on $\RR_{>0}^n$ where $|\xi|^{\omega_i} \coloneqq \prod_{i=1}^n |\xi_j|^{\omega_{ij}}$.

\begin{lemma}
   Suppose $\xi \in (\CC^\times)^n$. 
   Then $h_{\xi}(t)$ is a geodesically convex function on $(\CC^\times)^n / (S^1)^n\cong \mathbb{R}_{>0}^n$.
    Moreover, every minimizer $t$ of  $h_{\xi}$ satisfies \[
    \forall 1\leq i,j\leq n,\qquad |\xi_i| t_i^{-1} = |\xi_j| t_j^{-1}.
    \]
\end{lemma}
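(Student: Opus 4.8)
The plan is to pass to logarithmic coordinates, where the symmetric space $(\CC^\times)^n/(S^1)^n$ becomes flat and $h_\xi$ becomes a log-sum-exp of affine functions. Under the identification $(\CC^\times)^n/(S^1)^n\cong\RR_{>0}^n$ (a coset of $t=(t_1,\dots,t_n)$ corresponding to $\diag(t_1,\dots,t_n)$), the substitution $s_j=\log t_j$ carries the geodesics $\tau\mapsto K e^{\tau H}g$ of the quotient to the straight lines $\tau\mapsto s_0+\tau h$ in $\RR^n$; hence a function on $\RR_{>0}^n$ is geodesically convex exactly when it is convex in the coordinate $s\in\RR^n$. First I would record this identification explicitly.

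Second, I would rewrite the objective. Since $t^{-\omega_i}=e^{-\langle\omega_i,s\rangle}$ and $|\xi|^{\omega_i}=e^{\langle\omega_i,\log|\xi|\rangle}$ (with $\log|\xi|=(\log|\xi_1|,\dots,\log|\xi_n|)$), we get
\[
h_\xi(t)=\log\Big(\sum_{i=1}^n e^{\langle\omega_i,\,\log|\xi|-s\rangle}\Big),
\]
which is the composition of the convex log-sum-exp map $u\mapsto\log\sum_i e^{u_i}$ with an affine map in $s$, hence convex in $s$. This already yields the geodesic convexity. Alternatively, after the geodesic-preserving reparametrization $u=t^{1/2}$ one recognizes $h_\xi$ as the Kempf--Ness function $\log\|\pi(u)v\|$ for the rational representation $\pi(u)=\diag(u^{-\omega_1},\dots,u^{-\omega_n})$ of the diagonal torus and $v=(|\xi|^{\omega_1/2},\dots,|\xi|^{\omega_n/2})$, and cites \cref{ex:Kempf-Ness}; the direct log-sum-exp argument is shorter.

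Third, for the characterization of minimizers I would differentiate: writing $p_i\coloneqq|\xi|^{\omega_i}t^{-\omega_i}>0$, a direct computation gives $\partial h_\xi/\partial s_k=-\big(\sum_i\omega_{ik}p_i\big)\big/\big(\sum_i p_i\big)$, so by convexity $t$ is a minimizer iff $\sum_{i=1}^n p_i\,\omega_i=0$ in $\RR^n$. Substituting $\omega_i=ne_i-\mathbb{1}_n$ turns this into $n p_k=\sum_i p_i$ for every $k$, i.e.\ $p_1=\dots=p_n$. Expanding $p_k=\big(\prod_j t_j\big)\big(\prod_j|\xi_j|\big)^{-1}\big(|\xi_k|/t_k\big)^n$ and noting that the prefactor is independent of $k$, the equality $p_k=p_\ell$ is equivalent to $|\xi_k|t_k^{-1}=|\xi_\ell|t_\ell^{-1}$, which is the claimed condition.

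The computations are elementary; the only points needing a little care are the geodesic identification in the first step (so that geodesic convexity genuinely reduces to convexity in $s$) and, to keep the minimizer statement non-vacuous, the existence of a minimizer. For the latter one observes that $h_\xi$ is invariant under translation by $\mathbb{1}_n$ (since $\langle\omega_i,\mathbb{1}_n\rangle=0$) and, modulo that direction, is coercive because $0$ lies in the relative interior of $\conv\{-\omega_1,\dots,-\omega_n\}$ (indeed $\sum_i(-\omega_i)=0$ and the $\omega_i$ affinely span the hyperplane $\{x:\sum_k x_k=0\}$). I do not foresee a genuine obstacle.
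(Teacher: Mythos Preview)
Your proposal is correct and follows essentially the same approach as the paper: both prove geodesic convexity by observing that along a geodesic $t\mapsto e^{sv}t$ the function becomes $\log$ of a sum of exponentials of affine functions (the paper phrases this as ``sum of log-convex functions is log-convex''), and both characterize minimizers via the gradient condition $\sum_i p_i\omega_i=0$. Your treatment of the minimizer step is in fact more explicit than the paper's---you spell out why $\sum_i p_i(ne_i-\mathbb{1}_n)=0$ forces $p_1=\dots=p_n$ and how that unwinds to $|\xi_k|t_k^{-1}=|\xi_\ell|t_\ell^{-1}$, whereas the paper simply asserts these implications---and your remark on existence of minimizers (invariance along $\mathbb{1}_n$ and coercivity transverse to it) is a useful addition the paper omits.
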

\begin{proof}
   By \cref{def:geodesic-convexity}, $h_{\xi}(t)$ is geodesically convex if and only if \[
   h_{\xi}(e^{sv}t) = \log\left(\; \sum_{i=1}^n |\xi|^{\omega_i}\, t^{-\omega_i}\, e^{s \langle v,\omega_i\rangle} \; \right)
   \] is convex as a function of $s\in\RR$ for every 
   $v\in\RR^n$ and $t\in (\mathbb{R}_{>0})^n$. 
   Note that each summand $|\xi|^{\omega_i}\, t^{-\omega_i}\, e^{s \langle v,\omega_i\rangle}$ is a logarithmically convex function of $s$ and a sum of logarithmically convex functions is logarithmically convex.
   This implies that $h_{\xi}(e^{sv}t)$ is a convex function of $s$.

   We now prove the second claim. 
   A direct computation yields \[
   \begin{split}
       \langle v,\nabla h_{\xi}(t)\rangle &=\partial_{s=0}\; h_{\xi}(e^{sv}t) \\
       &=\frac{\sum_{i=1}^n |\xi|^{\omega_i}t^{-\omega_i} \langle v,\omega_i\rangle }{\sum_{i=1}^n |\xi|^{\omega_i}t^{-\omega_i}}.
   \end{split}
   \] Hence, $\nabla h_{\xi}(t)=0$ if and only if $\sum_{i=1}^n |\xi|^{\omega_i} t^{-\omega_i}\omega_i=0$.
   By the definition of $\omega_i$,  we have \[
  \nabla h_{\xi}(t) = 0\; \implies \;\forall i,j,\; |\xi|^{\omega_i} t^{-\omega_i} = |\xi|^{\omega_j} t^{-\omega_j}. 
   \] This implies $|\xi_i| t_i^{-1}=|\xi_j| t_j^{-1}$ and finishes the proof.
\end{proof}

We design our optimization problem as follows:

 \begin{equation} \label{sparse-precond}
     \min_{X \in \PD(n), t \in \mathbb{R_{+}}^n} \mu\left(X \circ ( t \cdot  p) ,(t^{-1} \cdot \xi) \right) + h_{\xi}(t)  
 \end{equation} 
where $\mu$ is the local condition number. Note that $h_{\xi}(t)$ is independent of the matrix $X$.
\begin{proposition}
The optimization problem posed in 
    \Cref{sparse-precond} is a geodesically convex optimization problem on the group $\PD(n) \times \RR_{>0}^n$.
\end{proposition}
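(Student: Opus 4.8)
The plan is to exhibit \cref{sparse-precond} as an instance of the geodesically convex framework of \crefpart{thm:main-g-convexity}{thm:main-part-2}, taking the group of preconditioners to be $G\coloneqq\GL_m(\CC)\times T_n(\CC)$ (with $m$ the number of equations, so $m=n$ here), where $\GL_m(\CC)$ acts on the system by the shuffling action and the diagonal torus $T_n(\CC)$ acts on the variables by scaling (and trivially on $\xi$ via its shuffling factor, nontrivially via its torus factor). First I would check that $G$ is a symmetric Lie subgroup of $\GL_m(\CC)\times\GL_n(\CC)$: it is Zariski-closed and stable under conjugate transposition, because $T_n(\CC)$ is. Its maximal compact subgroup is $K\coloneqq\rmU_m(\CC)\times(S^1)^n$, so $\rquo{G/K}\cong\PD(m)\times\RR_{>0}^n$ is precisely the Hadamard manifold over which \cref{sparse-precond} is posed; being Hadamard, its entire domain is geodesically convex, so it remains only to show that the objective is a geodesically convex function. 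I would then split the objective into $\mu\big(X\circ(t\cdot p),\,t^{-1}\cdot\xi\big)$ and $h_\xi(t)$: the latter does not involve $X$ and is geodesically convex on $\RR_{>0}^n$ by the preceding lemma, hence geodesically convex on the product, so the whole question reduces to the geodesic convexity of $(X,t)\mapsto\mu\big(X\circ(t\cdot p),\,t^{-1}\cdot\xi\big)$.

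For this term I would replay the proof of \crefpart{thm:main-g-convexity}{thm:main-part-2} with the group $G$ and with the Weyl inner product on sparse systems in place of the dense one. Writing $T\coloneqq\diag(t)$, so that $t^{-1}\cdot\xi=T\xi$ is a root of $t\cdot p$, the chain rule (\cref{lem:action-on-jacobian}) gives $D_{T\xi}\big(X\circ(t\cdot p)\big)=X\,D_\xi(p)\,T^{-1}$, whence
\[
\log\mu\big(X\circ(t\cdot p),\,t^{-1}\cdot\xi\big)=\log\big\Vert (X,t)\cdot p\big\Vert_W+\log\big\Vert T\,D^\dagger_\xi(p)\,X^{-1}\big\Vert .
\]
Along a geodesic $s\mapsto\big(e^{sH_1}X,\,e^{sH_2}T\big)$ of $\rquo{G/K}$, with $H_1\in\Herm(m)$ and $H_2$ a real diagonal matrix, the second summand becomes $s\mapsto\log\big\Vert e^{sH_2}\big(T\,D^\dagger_\xi(p)\,X^{-1}\big)e^{-sH_1}\big\Vert$, which is convex by \cref{prop:convexity-of-log-norm} (these are a subfamily of the maps treated there); so the second summand is geodesically convex. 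The first summand is the Kempf--Ness function (\cref{ex:Kempf-Ness}) of the rational representation of $G$ on the space of sparse polynomial systems, evaluated at $p$.

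The only genuinely new point --- and the step I expect to be the main, if mild, obstacle --- is verifying that the sparse Weyl inner product is $K$-invariant, which is what makes the first summand a Kempf--Ness function. Invariance under the shuffling action of $\rmU_m(\CC)$ is exactly the computation in \cref{lem:inner-invariant}, which uses only that each coordinate carries a Hermitian inner product. Invariance under the torus $(S^1)^n$ has to be argued separately: a unit-modulus diagonal change of variables multiplies each monomial $x^\alpha$ by the scalar $s^\alpha$ of modulus $1$ while leaving distinct monomials orthogonal with real norms, so the inner product is unchanged; note this is precisely the part of $\rmU_n(\CC)$ that the sparse inner product respects, which is why $T_n(\CC)$, and not all of $\GL_n(\CC)$, is the correct second factor. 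Granting this, \cref{ex:Kempf-Ness} gives the geodesic convexity of the first summand, hence of $\log\mu\big(X\circ(t\cdot p),\,t^{-1}\cdot\xi\big)$, hence --- since $z\mapsto e^z$ is convex and nondecreasing --- of $\mu\big(X\circ(t\cdot p),\,t^{-1}\cdot\xi\big)$ itself; adding the geodesically convex $h_\xi(t)$ completes the proof. The same reasoning covers the variant in which $\mu$ and $h_\xi$ are replaced by $\log\mu$ and $e^{h_\xi}$, since $e^{h_\xi(t)}=\sum_{i}|\xi|^{\omega_i}t^{-\omega_i}$ is again a sum of geodesically convex functions on $\RR_{>0}^n$.
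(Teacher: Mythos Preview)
Your argument is correct and follows the same skeleton as the paper's proof: decompose the objective as the condition-number term plus $h_{\xi}$, invoke \cref{thm:main-g-convexity} for the former and the preceding lemma for the latter, and conclude by additivity. The paper's own proof is a two-line citation; you have supplied the details it elides. In particular, you make explicit the identification of the symmetric subgroup $G=\GL_m(\CC)\times T_n(\CC)$ and of $\rquo{G/K}$ with $\PD(m)\times\RR_{>0}^n$; you verify the $(S^1)^n$-invariance of the inner product on the sparse polynomial space (needed because the sparse subspace is not $\rmU_n$-stable, so \cref{lem:inner-invariant} does not apply verbatim); and you note that \cref{thm:main-g-convexity} yields convexity of $\log\mu$, not of $\mu$, and bridge the gap via composition with the increasing convex map $e^z$. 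All of these are genuine points that the paper's proof leaves implicit.
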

\begin{proof}
$\mu\left(X \circ ( t \cdot  p) ,(t^{-1} \cdot \xi) \right)$ is geodesically convex on the entire space as follows from \Cref{thm:main-g-convexity}. $h_{\xi}(t)$ is geodesically convex also, and the sum of two geodesically convex functions is geodesically convex.
\end{proof}

\subsection{Applications}

\subsubsection{Preconditioning subdivision algorithms}
Subdivision methods are common in computational geometry and computational algebra over the reals. 
Suppose we are given a system of equations $\ff = (f_1,\ldots,f_m)$ with common real zero set $\mathrm{Z}(\ff)$. 
The basic idea of these methods is to identify conditions on a hypercube $B$ that guarantees either $B \cap \mathrm{Z}(f) = \emptyset$
or $B \cap \mathrm{Z}(f)$ is isomorphic to a linear object.  
A special case of the subdivision method involves root isolation (when $Z(\ff)$ is finite): 
The core idea is to repeatedly subdivide $B$ into smaller boxes until each box contains at most one root of $Z(\ff)$.

Existing subdivision algorithms \cite{subdivision-yap, AME-11, subdivison-mourrain} share a common feature:
Their complexities depend on the local condition number, typically evaluated at the center of the box.
For instance, the quantity $\lambda_1(\alpha)$ which makes an appearance in the complexity analysis of the subdivision method in \cite{subdivision-yap} implicitly encodes a condition number, and the complexity of the algorithm in  \cite{AME-11} is already analyzed using the local condition number, see Thm 6.5 \cite{AME-11}. 
In \cite{cucker_complexity_2022} this connection is clarified and much improved estimates are obtained on the complexity of a popular subdivision algorithm due Plantinga and Vegter. 
This suggests that preconditioning could significantly improve the performance of any subdivision algorithm.

The importance of preconditioning has already been noted in the aforementioned papers. 
For example, in \cite{subdivision-yap,subdivison-mourrain}, the authors consider a \textit{local preconditioning} step that we will call the \textit{inverse Jacobian scaling} (IJS, for short). 
That is, provided that the system $\ff=(f_1,f_2,\dots,f_k)$ is \textit{square} ($k=n$), and the Jacobian $D_\xi(\ff)\in\RR^{n\times n}$ is invertible, one replaces the original system $\ff$ with the system $\tilde{\ff}=(D_\xi(\ff))^{-1}\, \ff$. 
What essentially this method achieves is that the new system satisfies $D_\xi(\tilde{\ff})=I_n$, i.e., for every $i$, the tangent space to the zero locus of the each individual polynomial $f_i$ at $\xi$ is orthogonal to the axis line in the $i$-th direction. In terms of the local condition number, however, there is no guarantee that IJS decreases the local condition number. 

Our algorithm \cref{thm:shuffling-iteration} addresses this gap by provably improving the condition number, thereby reducing the complexity, and by preserving the geometric structure: 
The shuffling action leaves the ideal $I(\ff)$ and the zero locus $Z(\ff)$ unchanged.

\subsubsection{Preconditoning Homotopy Continuation Algorithms}
Homotopy continuation algorithms are widely used to solve polynomial systems over complex numbers \cite{Li-1997, Chen-Li-2015, burgisser2011problem, BC-condition-bk, Breiding-Timme-18}.
Two common issues in these algorithms is (1) ill-conditioning of a root, (2) well-conditioned but badly scaled roots. The latter looks numerically equivalent to having a root at "infinity".  
Every practitioner in the field seems to have a bag of tricks to deal with these issues. However, to the best of our knowledge, all these tricks are heuristics such as "scale  all coefficients using the largest one" or "normalize all polynomials to have similar norms". The only detailed work on the subject seems to be due to Morgan around 80's according to his book which is republished by SIAM \cite{morgan2009solving}.  Morgan gives two algorithms that implement the following two intuitions:
\begin{enumerate}
    \item Norm of the coefficient vector of each polynomial in the polynomial system should be similar.
    \item Coordinates of a root vector that is being tracked in the homotopy continuation should be as balanced as possible.
\end{enumerate}
These two ideas are intuitively clear, however, Morgan does not provide any justification of the method in terms of condition number analysis or mathematically otherwise.  We believe Morgan's ideas are useful, and this analysis remains an interesting research problem. We take a different route: We design preconditioning schemes that are guaranteed to make the condition number better. 

\begin{enumerate}
    \item Our first algorithm \Cref{precond-homotopy-1} follows a similar intuition to the first idea above: We use an $\GL_n(\CC)$ action on the polynomial equations to reduce the condition number. This action basically shuffles the equations and does not change the zero set at all.  We minimize the condition number under this action as it is a geodesically convex optimization problem.
    \item Our second algorithm  \Cref{precond-homotopy-2} implements both ideas above at the same time: It acts on the system of equations with  $\GL_n(\CC)$ and on the variables with  $\GL_n(\CC)$ at the same time. We call this double-preconditioning, and since it acts on the variables, it does change the geometry of the zero set. The change, however, is linear and easy to invert if necessary. It is a geodesically convex optimization problem to minimize the condition number under this $\GL_n(\CC)  \times  \GL_n(\CC)$ action. 
\end{enumerate}

\begin{algorithm} 
\caption{Preconditioned Linear Homotopy}
\begin{algorithmic} \label{precond-homotopy-1}
   \State \textbf{Input}: $\ff , \bm{g} \in \PP_{n,\dd}$, $\xi \in \CC^n$, $\ff(\xi)=0$.
   \Require $g(\xi)=0$
    \State $t=0$ , $q=g$, $x=\xi$
    \Repeat 
    \State Find $X \in \GL_n(\CC)$ such that $\mu_F(X\cdot q, \xi)$ is minimized.
    \begin{equation*}
    \begin{split}
    \Delta t &=  \frac{c}{d(X \cdot f, X \cdot q) \mu_F^2(X \cdot q, x)} \\
    t &= \min \{ t + \Delta t , 1 \} \\
     \tilde{q} &= t (X \cdot \ff) + (1-t) (Q \circ q) \\
     \tilde{x} &= N_{\tilde{q}}(\tilde{x}) \\
     q &= X^{-1}\cdot\tilde{q} \\
    \end{split}
    \end{equation*}
    \Until{$t=1$}
    \State \Return $x \in \mathbb{C}_{*}^n$ and halt.
\end{algorithmic}
\end{algorithm}

\begin{algorithm} 
\caption{Double-Preconditioned Linear Homotopy}
\begin{algorithmic} \label{precond-homotopy-2}
   \State \textbf{Input}: $\ff=(f_1,f_2,\ldots,f_n) , \bm{g}=(g_1,\ldots,g_n) \in \PP_{n,\dd}$, $\xi \in \mathbb{C}^n$, $\ff(\xi)=0$.
   \Require $g(\xi)=0$
    \State $t=0$ , $q=g$, $x=\xi$
    \Repeat 
    \State Find $(X,Y) \in \GL_n(\CC)\times \GL_n(\CC)$ such that 
   $\mu_F((X,Y)\cdot\ff,Y\xi)$ is minimized.
    \begin{equation*}
    \begin{split}
    \Delta t &=  \frac{c}{d( (X,Y)\cdot\ff, (X,Y)\cdot q ) \mu_F^2( (X,Y)\cdot\ff, Y x)} \\
    t &= \min \{ t + \Delta t , 1 \} \\
     \tilde{q} &= t (X,Y)\cdot\ff + (1-t) (X,Y)\cdot q \\
     \tilde{x} &= N_{\tilde{q}}(\tilde{x}) \\
     q &= Q^{-1} \tilde{q} \\
     x &= Q \tilde{x}
    \end{split}
    \end{equation*}
    \Until{$t=1$}
    \State \Return $x \in \mathbb{C}_{*}^n$ and halt.
\end{algorithmic}
\end{algorithm}

These algorithms are intended to be used in dense setting where polynomials have most coefficients non-zero. We wrote them in the context of linear homotopy; they can be easily adapted to other homotopy algorithms. To the best of our knowledge, the two algorithms above are the first rigorous methods in literature for preconditioning dense polynomial systems.

Probabilistic analysis of the linear homotopy algorithm is subject to a relatively mature literature under the umbrella of Smale's 17th problem \cite{burgisser2011problem}. The main idea in this literature is to use smoothed analysis setting and argue that ill-conditioning is a low-probability event. This very valuable as a theoretical result. In practical use of homotopy algorithms, however, the practitioner has a particular collection of equations coming from an engineering or basic science problem. These particular equations often can be ill-conditioned and probabilistic analysis does not give much insight on how to remedy this situation. Our algorithms here are intended to be used in the cases where one faces slow performance in homotopy continuation due to ill-conditioning.

\subsubsection{Preconditioning for Real Feasibility Testing In Quadratic Systems}
Given a system of quadratic equations, is there a way to quickly tell if they have a common real zero?  In at most generality, one does not expect a positive answer to this question. However, Barvinok and Rudelson were able to prove a sufficient condition for existence of a common real zero \cite{barvinok2022system} stated below.
\begin{theorem}[Barvinok, Rudelson] \label{quad-feas}
   There is an absolute constant $c >0$ such that the following holds. Let $Q_1,Q_2,\cdots,Q_m$, $ m\geq 3$, be real symmetric matrices such that 
   \[  \mathrm{Trace}(Q_i) = 0 \; \text{for} \; i =1,2,3,\cdots,m  \]
   and let $q_i: \mathbb{R}^n \rightarrow \mathbb{R}$,
   \begin{equation} \label{quadratic}
   q_i(x) = \langle Q_i x , x \rangle  \; \text{for} \; i =1,2,3,\cdots,m 
   \end{equation}
   be the corresponding quadratic forms. Let $A_1,A_2, \ldots, A_m$ be an orthonormal basis for the linear space spanned by $Q_1,Q_2,\ldots,Q_m$. Then the system \eqref{quadratic} has a real solution $x \neq 0$ if 
   \begin{equation}
       \label{eq:criterion}
  \norm{\sum_{i=1}^m A_i^2}_F \leq \frac{c}{m}. \end{equation}
\end{theorem}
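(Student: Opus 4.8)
The plan is to reduce the statement to the question of whether the origin lies in the image of a quadratic map on the sphere, and then to combine a low‑rank (Barvinok--Pataki type) feasibility principle with the dimensional and spectral consequences of \eqref{eq:criterion}. First, replacing $Q_1,\dots,Q_m$ by the orthonormal basis $A_1,\dots,A_m$ changes neither the common real zero set (both families span the same subspace $\mathcal{L}\subseteq\Sym_n(\RR)$, and the $q_i$ are homogeneous) nor the matrix $\sum_i A_i^2$ (an orthogonal change of orthonormal basis of $\mathcal L$ leaves it fixed), so the criterion \eqref{eq:criterion} is intrinsic to $\mathcal L$. Setting $f(x)\coloneqq\big(\langle A_1x,x\rangle,\dots,\langle A_mx,x\rangle\big)$, it then suffices to show $0\in K\coloneqq f(S^{n-1})$, which yields a nonzero common zero $x$ of the $q_i$.

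Next I extract what \eqref{eq:criterion} provides. Put $M\coloneqq\sum_i A_i^2\succeq 0$; then $\mathrm{Trace}(M)=\sum_i\Vert A_i\Vert_F^2=m$ and $\Vert M\Vert_F\le c/m$. From $A_j^2\preceq M$ we get $\Vert A_j\Vert_{\mathrm{op}}^2\le\Vert M\Vert_F\le c/m$, hence $\Vert f(x)\Vert_2\le\sqrt{c}$ on $S^{n-1}$, so $K$ lies in the tiny ball $\bar B(0,\sqrt{c})$. From $m=\mathrm{Trace}(M)\le\sqrt{\rk M}\,\Vert M\Vert_F\le\sqrt{n}\,c/m$ we obtain $n\ge m^4/c^2$: the hypothesis forces the ambient dimension to be large relative to $m$. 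Finally, for every unit $\theta\in S^{m-1}$ the matrix $Q_\theta\coloneqq\sum_i\theta_iA_i$ has $\mathrm{Trace}(Q_\theta)=0$ and $\Vert Q_\theta\Vert_F=1$, so $\lambda_{\max}(Q_\theta)>0$; equivalently, since $0=\int_{S^{n-1}}f\,d\sigma$ (as $\mathrm{Trace}(Q_i)=0$), the origin lies in $\operatorname{int}\conv(K)$ with a strictly positive margin in every direction.

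The engine of the argument is the observation that $\mathrm{Trace}(A_i)=0$ for all $i$ means $I_n/n$ is a positive semidefinite matrix of trace $1$ lying in $\mathcal{L}^\perp$, so the spectrahedron $\{Z\succeq 0:\langle A_i,Z\rangle=0\ (i\in[m]),\ \mathrm{Trace}(Z)=1\}$ is nonempty. By the Barvinok--Pataki low‑rank principle it contains a point $Z^\star$ of rank $r$ with $\binom{r+1}{2}\le m+1$, i.e. $r=O(\sqrt{m})$; and if $r=1$ then $Z^\star=xx^{\top}$ with $x\ne 0$ a common zero, and we are done. The plan is to force $r=1$ using the flatness from \eqref{eq:criterion}: writing $Z^\star=\sum_{k=1}^r y_ky_k^{\top}$ one gets $\sum_k f(y_k)=0$, and the smallness $\Vert A_j\Vert_{\mathrm{op}}\le\sqrt{c/m}$ together with $n\ge m^4/c^2$ should let one collapse these rank‑one summands into a single one — equivalently, show that the cone $\mathcal{L}^\perp\cap\mathrm{PSD}$, which already contains $I_n$ in its relative interior, has a rank‑one extreme ray. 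An alternative packaging of the same point is via the convexity‑of‑quadratic‑images circle (Dines, Brickman, Barvinok): for $n$ large enough relative to $m$, $K=f(S^{n-1})$ is convex, whence $0\in\operatorname{int}\conv(K)=\operatorname{int}K\subseteq K$ by the last observation above.

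The main obstacle is precisely this passage from ``a low‑rank positive semidefinite matrix in $\mathcal{L}^\perp$'' (or ``the origin enclosed by $\conv f(S^{n-1})$'') to ``an honest rank‑one solution'' (``$0\in f(S^{n-1})$''), and it is here that the Frobenius smallness $\Vert\sum_iA_i^2\Vert_F\le c/m$ must be used in an essential way, not merely to force $n$ large. First‑moment data are powerless: for $g\sim N(0,I_n)$ one has $\mathbb{E}[f(g)]=0$ and $\mathbb{E}\Vert f(g)\Vert_2^2=2m$, and these cannot distinguish ``$0\in K$'' from ``$K$ a small sphere around a nonzero point''. One must bring in the even/quadratic structure — either through higher moments of $f(g)$, whose fluctuations are governed precisely by $\Vert\sum_iA_i^2\Vert_F$ and kindred trace expressions, or through the rank‑collapsing argument above — after which choosing $c$ small enough to clear the requisite threshold on $n$ is routine bookkeeping.
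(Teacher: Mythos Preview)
The paper does not prove this theorem at all: it is quoted verbatim from Barvinok and Rudelson \cite{barvinok2022system} and used as a black box in the subsection on preconditioning quadratic feasibility. There is therefore no ``paper's own proof'' to compare your attempt against.

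As for your proposal on its own merits, it is not a proof but an outline that correctly isolates several structural consequences of the hypothesis---the bound $\|A_j\|_{\mathrm{op}}^2\le c/m$, the dimensional lower bound $n\ge m^4/c^2$, and the fact that $0\in\operatorname{int}\conv f(S^{n-1})$---and then stalls at exactly the hard step. You yourself flag the obstacle: passing from a low-rank PSD point in $\mathcal L^\perp$ (guaranteed by Barvinok--Pataki) to a rank-one point, or equivalently from $0\in\conv K$ to $0\in K$. Neither of the two routes you sketch is carried out. The convexity-of-quadratic-images route (Dines/Brickman/Barvinok) gives convexity of $f(S^{n-1})$ only for $m\le 2$ in general; for larger $m$ one needs extra hypotheses, and you have not shown that \eqref{eq:criterion} supplies them. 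The ``rank-collapse'' route is left as an intention (``the smallness\dots\ should let one collapse these rank-one summands''), with no mechanism given. Your closing paragraph is an honest admission that the argument is unfinished: the first-moment data are indeed powerless, and you gesture at higher moments or concentration without executing anything. In short, the preliminary reductions are sound, but the core of the theorem---why smallness of $\|\sum_i A_i^2\|_F$ forces an actual common zero rather than merely a low-rank semidefinite witness---remains entirely open in your write-up.
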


We note the obvious symmetry of the quadratic system:
For any $Y \in \GL_n(\RR)$, the system 
\begin{equation} \label{act-quad}
q_i(x) = \langle Q_i Y x , Y x \rangle  \; \text{for} \; i =1,2,3,\cdots,m
\end{equation}
has a real root if and only if the system \eqref{quadratic} has a real root. 
In other words, the existence of a root of \eqref{quadratic} does not change if we replace $Q_i$ by $Y^T Q_i Y$ for some $Y\in\GL_n(\RR)$.

However, the criterion given in the theorem is not invariant under this action.
To keep the symmetry of matrices we will use an action of $\PD(n)$ on the system \eqref{quadratic}. If we use the action directly as defined in \eqref{act-quad} the criterion in \Cref{quad-feas} becomes
\[ \norm{\sum_{i=1}^m Y^{T}A_iY Y^T A_i Y} \leq \frac{c}{m} , \]
and we don't have a quick way to minimize this complicated sum. We design a relaxation of this optimization problem below. We note that 
\[ \mathbb{E}[ \big( \sum_{i=1}^m \xi_i A_i \big)^2 ] = \sum_{i=1}^m A_i^2 ,  \]
for independent Rademacher random variables $\xi_i \in  \{-1, +1 \}$.  So, we do the following: we sample a realization of $\xi_i$ for $i=1,2,\ldots,m$ say $\psi_i$ and define 
\[  B_{\psi} := \psi_1 A_1 + \psi_2 A_2 + \ldots + \psi_n A_n 
\enspace .\]
Here is our optimization problem:
\begin{equation} \label{relax}
    \min_{Y \in \PD(n)} \norm{Y^T B_{\psi} Y}
\end{equation}
Note that this is a geodesically convex optimization problem, and we have
\[  \norm{A_1^2 + A_2^2 + \cdots + A_m^2}  = \mathbb{E}_{\psi} [\norm{ (Y^T B_{\psi} Y)^2 }] \leq \mathbb{E}_{\psi} \norm{Y^T B_{\psi} Y}^2 \enspace.
\]

\section*{Acknowledgements}
L.D. is supported by the European Union (ERC Grant SYMOPTIC, 101040907) and by the Deutsche Forschungsgemeinschaft (DFG, German Research Foundation, 556164098). A.E. is supported by NSF-CCF-2414160.

\bibliographystyle{amsalpha}
\bibliography{references}

\appendix

\section{A more detailed experimental section}
\label{sec:detailed-experiments}

We present experimental results to 
evaluate the performance of our preconditioning algorithms.
Our implementation
relies  on the Julia packages \texttt{Manopt.jl} and \texttt{Manifolds.jl} \cite{Manoptjl,Manifoldsjl}, which enable efficient geodesically convex optimization over matrix manifolds.

We evaluate the effect of preconditioning on the Frobenius condition number $\kappa_F$. We generate a dataset of 1000 random matrices of size $500 \times 500$, with entries drawn independently from the standard normal distribution. For each matrix, we compute both the optimal diagonal preconditioner and the optimal block-diagonal preconditioner, using a fixed block size of $5 \times 5$.
The results appear in \cref{fig:Gauss-Frobenius-BlockvsDiag}
and demonstrate consistent improvement of the condition number for both schemes. On average, block-diagonal preconditioning reduces the Frobenius condition number by a factor of approximately $1.6$. In comparison, diagonal preconditioning yields a more modest reduction factor around $1.16$. This indicates that block-diagonal preconditioning improves the conditioning by roughly $1.38$ times more than its diagonal counterpart, 
and is an experimental indication of the benefits of block structure.

\begin{figure}[htbp]
  \centering
  \begin{subfigure}[b]{0.32\textwidth}
    \centering
    \includegraphics[width=\textwidth]{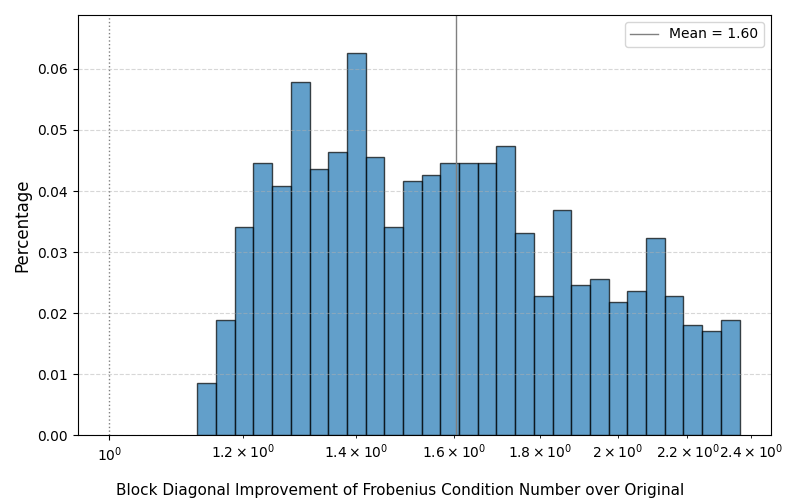}
    \phantomsubcaption
  \end{subfigure}
  \hfill
  \begin{subfigure}[b]{0.32\textwidth}
    \centering
    \includegraphics[width=\textwidth]{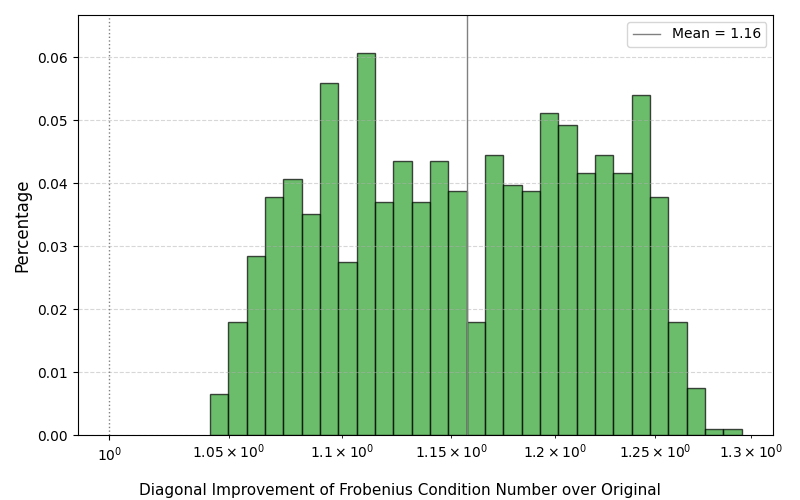}
    \phantomsubcaption
  \end{subfigure}
  \hfill
  \begin{subfigure}[b]{0.32\textwidth}
    \centering
    \includegraphics[width=\textwidth]{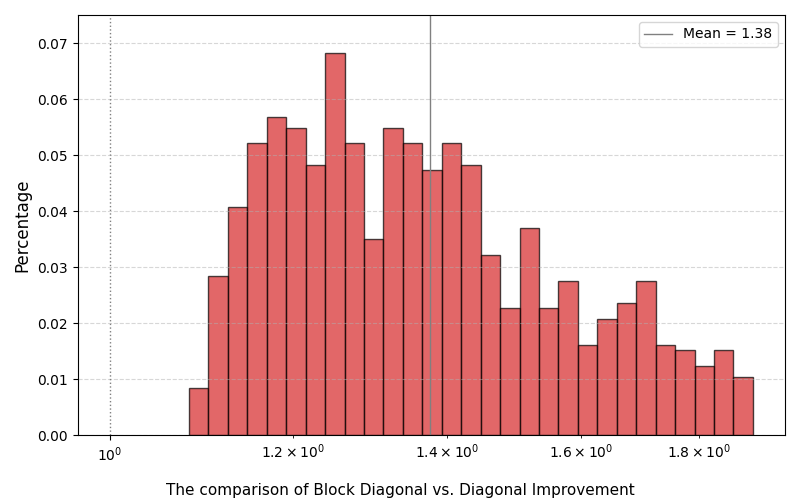}
    \phantomsubcaption
  \end{subfigure}
  \caption{The figure illustrates the improvement in the Frobenius condition number, defined as $\kappa_F(A)/\kappa_F^\star$, for block-diagonal preconditioning with $5 \times 5$ blocks (blue) and diagonal preconditioning (green), applied to $500 \times 500$ matrices with i.i.d.\ standard normal entries. On average, the block-diagonal preconditioner achieves a reduction factor of $1.6$, compared to $1.16$ for the diagonal case. The red curve shows the ratio of these improvements, quantifying the relative advantage of block-diagonal preconditioning.}
  \label{fig:Gauss-Frobenius-BlockvsDiag}
\end{figure}

We further examine the impact of the optimal Frobenius preconditioner on the Euclidean condition number $\kappa(A)$. 
As shown in \cref{fig:correlation-Gauss}, the block-diagonal preconditioner optimized for $\kappa_F$ also significantly reduces $\kappa$, with a moderately strong empirical correlation ($\rho>0.68$) between the two improvements. 
Furthermore, the optimal Frobenius preconditioner reduces the Euclidean condition number in all cases, and improves it by $1.29$ on average.
This suggests that the Frobenius-optimal preconditioner can serve as an effective preconditioner for $\kappa$.

\begin{figure}[htbp]
  \centering
  \begin{subfigure}[b]{0.45\textwidth}
    \centering
    \includegraphics[width=\textwidth]{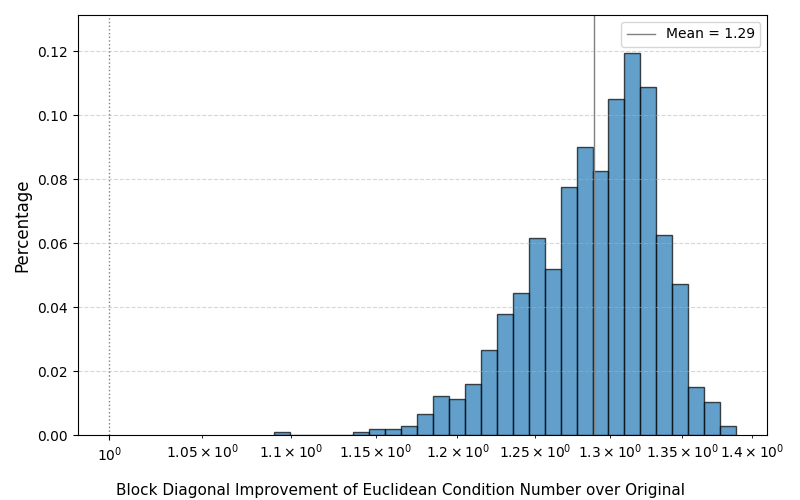}
    \phantomsubcaption
  \end{subfigure}
  \hfill
  \begin{subfigure}[b]{0.32\textwidth}
   \centering   
  \includegraphics[width=\textwidth]{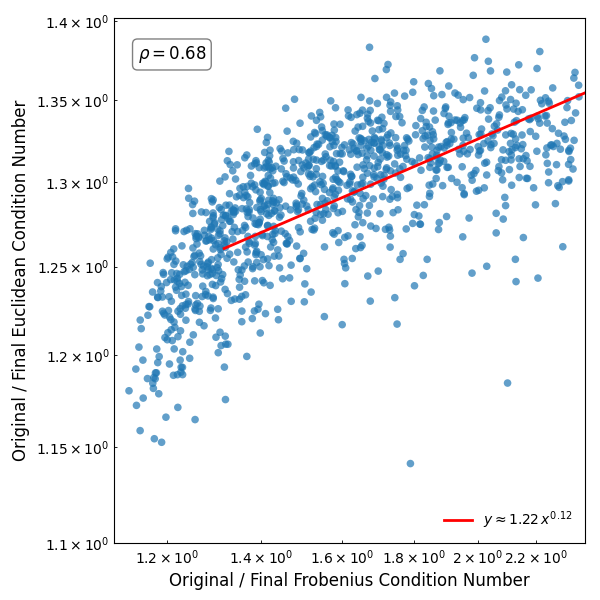}
  \phantomsubcaption
  \end{subfigure}
  \caption{The correlation between the improvement on the Frobenius and the Euclidean condition number.}
  \label{fig:correlation-Gauss}
\end{figure}

In our second experiment, we evaluate the preconditioners on $425$ sparse matrices from the \texttt{SuiteSparse} collection, each with $\leq 1000$ rows. Consistent with the Gaussian case, block-diagonal preconditioning yields substantial improvements:
\begin{itemize}
\item The Frobenius condition number $\kappa_F$ improves by $\approx 1602\times$ on average with block-diagonal preconditioning, compared to $\approx 600\times$ for diagonal preconditioning.
\item The block-diagonal preconditioner's improvement is $2.43\times$ greater than that of the diagonal preconditioner, further demonstrating its superiority.
\end{itemize}

\begin{figure}[htbp]
  \centering
  \begin{subfigure}[b]{0.32\textwidth}
    \centering
    \includegraphics[width=\textwidth]{figures/BlockvsOriginal_Sparse_Frobenius.png}
    \phantomsubcaption
  \end{subfigure}
  \hfill
  \begin{subfigure}[b]{0.32\textwidth}
    \centering
    \includegraphics[width=\textwidth]{figures/DiagvsOriginal_Sparse_Frobenius.png}
    \phantomsubcaption
  \end{subfigure}
  \hfill
  \begin{subfigure}[b]{0.32\textwidth}
    \centering
    \includegraphics[width=\textwidth]{figures/BlockvsDiag_Sparse_Frobenius.png}
    \phantomsubcaption
  \end{subfigure}
  \caption{The same as \cref{fig:Gauss-Frobenius-BlockvsDiag}, for the \texttt{SuiteSparse} dataset. }
  \label{fig:Sparse-Frobenius-BlockvsDiag}
\end{figure}

We further investigate whether optimal Frobenius preconditioners-designed to minimize $\kappa_F$-also improve the Euclidean condition number $\kappa$. 
For the \texttt{SuiteSparse} matrices, we observe a strong correlation ($\rho \approx 0.96$) between $\kappa_F$ and $\kappa$ improvements, with optimal Frobenius preconditioner reducing $\kappa$ in $\approx 95\%$ of the data.
\cref{fig:correlation-Sparse} shows this relationship. 
Combined with the Gaussian results \cref{fig:correlation-Gauss} , this confirms that Frobenius preconditioner consistently reduces the Euclidean condition number, $\kappa$.

\begin{figure}[htbp]
  \centering
  \begin{subfigure}[b]{0.45\textwidth}
    \centering
    \includegraphics[width=\textwidth]{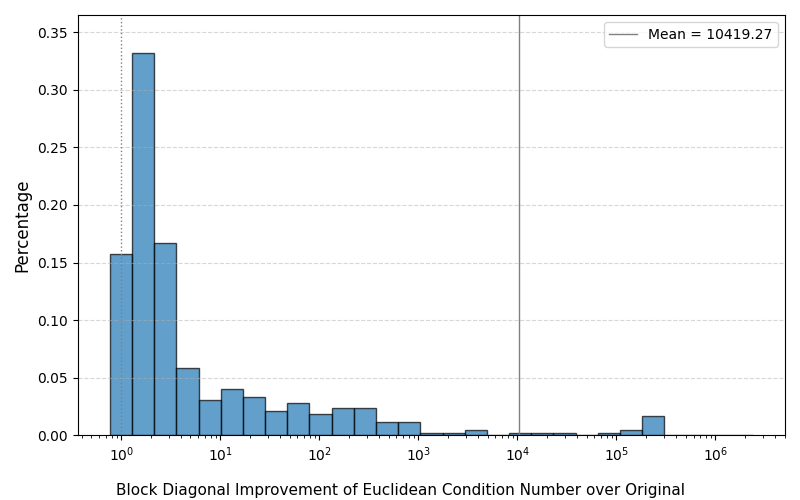}
    \phantomsubcaption
  \end{subfigure}
  \hfill
  \begin{subfigure}[b]{0.32\textwidth}
   \centering   
  \includegraphics[width=\textwidth]{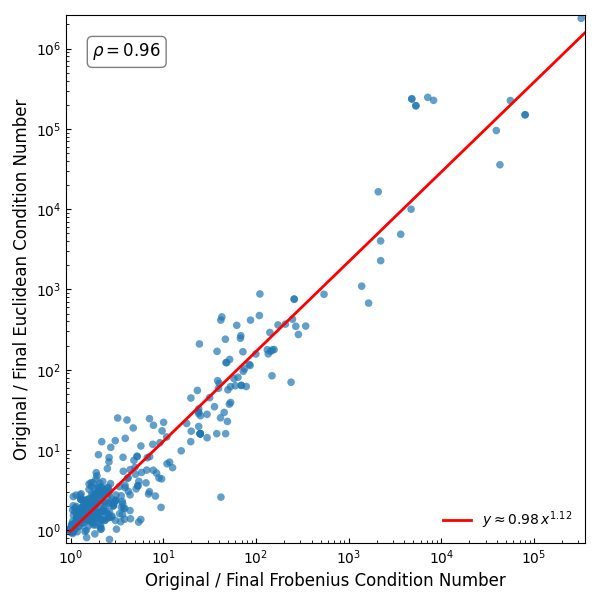}
  \phantomsubcaption
  \end{subfigure}
  \caption{The correlation between the improvement on the Frobenius and the Euclidean condition number for the \texttt{SuiteSparse} dataset.}
  \label{fig:correlation-Sparse}
\end{figure}

\end{document}